\newtheorem{lemma}{Lemma}[section]
\newtheorem{proposition}{Proposition}[section]
\newtheorem{theorem}{Theorem}[section]
\newtheorem{corollary}{Corollary}[section]
\theoremstyle{definition}
\newtheorem{definition}{Definition}[section]
\theoremstyle{remark}
\newtheorem{remark}{Remark}[section]
\newtheorem{example}{Example}[section]
\newcommand{\defin}[1]{\emph{\textbf{#1}}}
\newcommand{\bd}[1]{\boldsymbol{#1}}
\newcommand{\A}{\mathbf{A}}
\newcommand{\C}{\mathcal{C}}
\newcommand{\N}{\mathbb{N}}
\renewcommand{\O}{\mathcal{O}}
\newcommand{\R}{\mathbb{R}}
\newcommand{\sierp}{\mathbb{S}}
\newcommand{\U}{\mathcal{U}}
\newcommand{\V}{\mathcal{V}}
\newcommand{\W}{\mathcal{W}}
\newcommand{\X}{\mathbf{X}}
\newcommand{\Y}{\mathbf{Y}}
\newcommand{\Z}{\mathbf{Z}}
\newcommand{\prodtimes}{\mathrel{\times_\text{prod}}}
\newcommand{\id}{\text{id}}
\newcommand{\eval}{\text{eval}}
\newcommand{\Tot}{\text{Tot}}
\newcommand{\dom}{\text{dom}}
\newcommand{\SSigma}{\pmb{\boldsymbol{\Sigma}}}
\newcommand{\PPi}{\pmb{\boldsymbol{\Pi}}}
\newcommand{\QCB}[1]{\mathsf{QCB}_0(\PPi^1_{#1})}
\newcommand{\Based}{\text{Based}}
\newcommand{\KK}[1]{\N\uple{#1}}
\newcommand{\NN}{\KK{2}}
\newcommand{\Baire}{\mathcal{N}}
\newcommand{\Bairec}{\Baire_c}
\newcommand{\cantor}{2^\N}
\newcommand{\uple}[1]{{\langle #1 \rangle}}
\newcommand{\cf}{\bd{1}} 
\newcommand{\res}[1]{\mathop{\upharpoonright_{#1}}}
\newcommand{\tto}{\rightrightarrows}
\author{Mathieu Hoyrup\footnote{615 rue du jardin botanique, 54600 Villers-l\`es-Nancy, France, mathieu.hoyrup@inria.fr}}
\title{Results in descriptive set theory on some represented spaces}
\begin{document}
\maketitle

%
%
%
%
%
%

Descriptive set theory was originally developed on Polish spaces. It was later extended to~$\omega$-continuous domains \cite{Selivanov04} and recently to quasi-Polish spaces \cite{Brecht13}. All these spaces are countably-based. Extending descriptive set theory and its effective counterpart to general represented spaces, including non-countably-based spaces has been started in \cite{PaulyB15}.

We study the spaces~$\O(\N^\N)$,~$\C(\N^\N,2)$ and the Kleene-Kreisel spaces~$\KK{\alpha}$. We show that there is a~$\Sigma^0_2$-subset of~$\O(\N^\N)$ which is not Borel. We show that the open subsets of~$\N^{\N^\N}$ cannot be continuously indexed by elements of~$\N^\N$ or even~$\N^{\N^\N}$, and more generally that the open subsets of~$\KK{\alpha}$ cannot be continuously indexed by elements of~$\KK{\alpha}$. We also derive effective versions of these results.

These results give answers to recent open questions on the classification of spaces in terms of their base-complexity, introduced in \cite{deBSS16}. In order to obtain these results, we develop general techniques which are refinements of Cantor's diagonal argument involving multi-valued fixed-point free functions and that are interesting on their own right.

\section{Introduction}
The core concepts of descriptive set theory are the definitions of pointclasses, i.e.\ classes of sets, according to the way those sets can be described.

It was originally developed on Polish spaces, where the definitions of pointclasses are extensional: they describe how the sets in the classes~$\SSigma_0^1$,~$\Sigma^0_1$,~$\SSigma^0_2$, etc.\ are built starting from basic open sets in a countable basis \cite{Kechris95}. In particular these definitions imply that all these classes can be parametrized by the Baire space~$\Baire=\N^\N$, which is usually expressed by saying that on a Polish space~$\X$, each class~$\Gamma(\X)$ has a~$\Baire$-universal set, i.e.\ a set~$U$ in~$\Gamma(\Baire\times \X)$ such that~$\Gamma(\X)=\{U_f:f\in\Baire\}$ where~$U_f=\{x\in\X:(f,x)\in U\}$. All this extends to quasi-Polish space, by slightly changing the definitions of the pointclasses \cite{Selivanov04,Brecht13}.

On a general represented space, the definitions are intensional: a set belongs to a pointclass if its preimage under the representation belongs to that pointclass \cite{PaulyB15}. However this definition does not give information about what those sets look like and how to obtain them.


Given a represented space, we are interested in understanding more concretely what the elements of the various pointclasses look like, or how they can be described. We provide here essentially impossibility results, stating that there is no simple way of describing them. A typical result is that for some particular represented spaces~$\X,\Y$, there is no~$\Y$-universal open set, i.e.\ there is no open subset~$U$ of~$\X\times\Y$ such that~$\O(\X)=\{U_y:y\in\Y\}$ or equivalently there is no continuous surjection from~$\Y$ to~$\O(\X)$. The space~$\Y$ is usually~$\Baire$ or some Kleene-Kreisel space~$\KK{\alpha}$. We also derive effective versions of such a result: there is no computable surjection from any~$\Sigma^1_k$-subset of~$\N$ to the computable open subsets of~$\X$, or there is no~$\Delta^1_k$-oracle enumerating those computable open sets.

We develop general tools that enable to prove these impossibility results in a number of interesting cases, in particular for the space of clopen subsets of~$\Baire$ or the Kleene-Kreisel spaces~$\KK{\alpha}$. These tools treat the continuous and computable settings in the same way.

Our tools and techniques are based on Cantor's diagonal argument: the existence of a fixed-point free function is used to disprove the existence of an enumeration. However we usually work on spaces that are lattices of open sets, where every function has a fixed-point but where a \emph{multi-valued} fixed-point free function may exist, and we need to develop techniques enabling us to deal with multi-valued functions. In particular we introduce the notion of a \emph{canonizable space}.

%
%
%
%
%

The paper is organized as follows. In Section \ref{sec_background} we give all the necessary notions about represented spaces and their topologies. In Section \ref{sec_nonborel} we prove that the space~$\O(\Baire)$ contains a~$\Sigma^0_2$-subset that is not Borel. In Section \ref{sec_param} we develop techniques based on the diagonal argument to prove some impossibility results. In Section \ref{sec_applications} we apply these techniques to prove that some particular spaces have no~$\Baire$-universal open set. In Section \ref{sec_canon} we refine our techniques and introduce the notion of a canonizable space to include parameters spaces other than~$\Baire$ and its subspaces. We apply this technique to classify the Kleene-Kreisel spaces and and to prove a result about Markov computability.

\section{Background}\label{sec_background}
Represented spaces are at the basis of computability theory on general spaces \cite{Wei00}. We follow the modern presentation given in \cite{Pauly15}, \cite{PaulyB15}.

A \defin{represented space} is a pair~$\X=(X,\delta_X)$, where~$\delta_X:\subseteq\Baire\to X$ is a partial onto map. We often identify~$\X$ with its carrier set~$X$, for commodity. If~$\delta_X(p)=x$ then we say that~$p$ is a \defin{name} of~$x$.  A point~$x\in\X$ is \defin{computable} if it has a computable name. We denote by~$\X_c$ the set of computable points of~$\X$. A function~$f:\X\to\Y$ is \defin{continuous} (resp.\ \defin{computable}) if there exists a continuous (resp.\ computable) function~$F:\dom(\delta_X)\to \dom(\delta_Y)$ mapping any name of any~$x\in\X$ to a name of~$f(x)$. 
We call~$F$ a \defin{realizer} of~$f$.

Two represented spaces~$\X,\Y$ are computably isomorphic, denoted~$\X\cong\Y$, if there is a bijection between that is computable in both directions.

\paragraph{Cartesian product.}
The product of two represented spaces~$\X,\Y$ is denoted by~$\X\times \Y$. A name of~$(x,y)$ is simply a pair of names of~$x$ and~$y$ (two elements of~$\Baire$ can be paired into another element of~$\Baire$). One has~$\X\times \Y\cong\Y\times \X$.

\paragraph{Exponentiation.}
The set of continuous functions from~$\X$ to~$\Y$ is itself a represented space denoted by~$\C(\X,\Y)$. A name for a continuous function~$f:\X\to\Y$ is a name for any continuous realizer~$F$ of~$f$ (such a name gives the information needed to evaluate~$F$, for instance it encodes a list of pairs~$(u,v)$ of finite sequences such that for every~$p\in\dom(\delta_X)$, (i) if~$u$ is a prefix of~$p$ and~$(u,v)$ appears in the list then~$F(p)$ extends~$v$ and (ii) for every prefix~$v$ of~$F(p)$ there exists a prefix~$u$ of~$p$ such that~$(u,v)$ appears in the list).

The evaluation map~$\eval:\X\times\C(\X,\Y)\to\Y$ is computable and~$\C(\X\times \Y,\Z)\cong\C(\X,\C(\Y,\Z))$.

\paragraph{Topology.}
Every represented space~$\X=(X,\delta_X)$ has a canonical topology~$\tau_X$, which is the final topology of~$\delta_X$: a set~$U\subseteq X$ is open iff~$\delta_X^{-1}(U)$ is the intersection of an open subset of~$\Baire$ with~$\dom(\delta_X)$. As~$\dom(\delta_X)\subseteq\Baire$ is countably-based, hence sequential, the final topology of~$\delta_X$ is sequential. The set of open subsets of~$\X$ is itself a represented space obtained as follows. The Sierpi\'nski space~$\sierp=(\{\bot,\top\},\delta_\sierp)$ where~$\delta_\sierp(0^\N)=\bot$ and~$\delta_\sierp(p)=\top$ for~$p\neq 0^\N$. A subset~$U$ of~$\X$ is then open exactly when its characteristic function~$\cf_U:\X\to\sierp$ is continuous. The space of open subsets of~$X$ is then~$\O(\X):=\C(\X,\sierp)$ with its canonical representation. In turn, the topology on~$\O(\X)$ is the Scott topology, which coincides with the~$\omega$-Scott topology (see \cite{deBSS16} for more details).

\paragraph{Subspace.}
If~$\X=(X,\delta_X)$ is a represented space then any subset~$A\subseteq X$ has a canonical representation~$\delta_A$, which is simply the restriction of~$\delta_X$ to~$\delta_X^{-1}(A)$. Thus it immediately induces notions of continuous and computable functions from~$A$ to any other represented space. It is known that sequential topological spaces are not stable under taking subspace topologies. Similarly, the topology on~$A$ induced by~$\delta_A$ is not in general the subspace topology, but is richer. In the same way, the topology on a product of represented spaces is not in general the product topology but is richer.

\paragraph{Admissibility.}
One has to be careful as there are two a priori distinct notions of continuity for functions from~$\X$ to~$\Y$: the functions having a continuous realizer, and the functions that are continuous w.r.t.\ to the induced topologies. One easily checks that a continuous (i.e.\ continuously realizable) function is always continuous w.r.t.\ the induced topologies, however the converse implication fails without further assumption on the representation. The appropriate condition that makes this implication an equivalence is the admissibility of the representation. A representation is admissible if, intuitively, representing a point~$x$ is equivalent to being able to know which open sets contain~$x$. The evaluation map~$\eval:\X\times\O(\X)\to\sierp$, equivalently~$\eval:\X\to\O(\O(\X))$ is computable: the space~$\X$ is \defin{admissible} (resp.\ \defin{computably admissible}) if the inverse of~$\eval:\X\to\O(\O(\X))$ is continuous (resp.\ computable). In this article, for the general results do not require the representations to be admissible, which is useful when one deals with Markov computability for instance (see Section \ref{sec_markov}).

\paragraph{Multifunctions.}
Often, a computation taking a name of~$x\in \X$ as input and producing a name of some~$y\in\Y$ as output does not induce a function from~$\X$ to~$\Y$, as~$y$ might depend not only on~$x$, but on the name of~$x$ given as input. For instance, given a real number~$x\in [0,1]$ it is possible to compute some ~$y\in [0,1]$ such that~$y\neq x$. However it is not possible to compute the same~$y$ for all name of any~$x$, as it would induce a computable, hence continuous function from~$[0,1]$ to~$[0,1]$ without fixed-point, violating the Intermediate Value Theorem. A \defin{multifunction} is a set-valued function~$f:\X\to 2^\Y$, denoted by~$f:\X\tto\Y$. A multifunction~$f:\X\tto \Y$ is continuous (resp.\ computable) if there exists a continuous (resp.\ computable) function~$F:\dom(\delta_X)\to \Baire$ such that~$\delta_Y(F(p))\in f(\delta_X(p))$ for all~$p\in\dom(\delta_X)$.

\paragraph{Descriptive set theory.}
Notions from descriptive set theory originally developed on Polish spaces \cite{Kechris95} have a straightforward extension to any represented space \cite{PaulyB15}. The question is then whether the results also extend, and how.

Any pointclass~$\Gamma(\Baire)$ on~$\Baire$ induces a pointclass on~$\X$ as follows:~$A\subseteq\X$ belongs to~$\Gamma(\X)$ if there is a set~$B\subseteq\Baire$ in~$\Gamma(\Baire)$ such that~$\delta_X^{-1}(A)=B\cap \dom(\delta_X)$.

In a topological space, the class of Borel sets is the smallest class containing the open sets and closed under taking complements and countable unions.

\subsection{Retractions}

As we have already mentioned, if~$A$ is a subset of a represented space~$\X$, then the represented subspace~$\A=(A,\delta)$ is not a topological subspace in general. Similarly, if there are continuous (i.e., continuously realizable) functions~$f:\Y\to \X$ and $g:f(\Y)\to \Y$ such that~$g\circ f=\id_\Y$, then~$f$ is not necessarily a topological embedding because~$g$ is not necessarily continuous for the subspace topology on~$f(\Y)$. However if~$g$ has a continuous extension over~$\X$ then~$f$ is a topological embedding.

\begin{definition}
A \defin{section-retraction} is a pair~$(s,r)$ of functions~$s:\Y\to\X$,~$r:\X\to \Y$ such that~$r\circ s=\id_\Y$. If both~$s$ and~$r$ are continuous, then~$\Y$ is called a \defin{continuous retract} of~$\X$. If both~$s$ and~$r$ are computable, then~$\Y$ is called a \defin{computable retract} of~$\X$.
\end{definition}
Intuitively, elements of~$\Y$ can be encoded by elements of~$\X$. If~$r(x)=y$ then~$x$ is a code for~$y$, and every~$y$ has a canonical code, given by~$s(y)$. It implies in particular that~$\Y$ is a continuous (resp.\ computable) image of~$\X$.

\begin{lemma}\label{lem_retract_open}
If~$\Y$ and~$\Y'$ are continuous (resp.\ computable) retracts of~$\X$ and~$\X'$ respectively, then~$\C(\Y,\Y')$ is a continuous (resp.\ computable) retract of~$\C(\X,\X')$. In particular~$\O(\Y)$ is a continuous (resp.\ computable) retract of~$\O(\X)$.
\end{lemma}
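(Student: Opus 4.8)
The plan is to build an explicit section-retraction for~$\C(\Y,\Y')$ directly out of the two given ones. Write~$(s,r)$ for the section-retraction witnessing that~$\Y$ is a retract of~$\X$, so~$s:\Y\to\X$,~$r:\X\to\Y$ with~$r\circ s=\id_\Y$, and likewise~$(s',r')$ with~$s':\Y'\to\X'$,~$r':\X'\to\Y'$ and~$r'\circ s'=\id_{\Y'}$. I would then define
\[
S:\C(\Y,\Y')\to\C(\X,\X'),\quad S(h)=s'\circ h\circ r,
\]
\[
R:\C(\X,\X')\to\C(\Y,\Y'),\quad R(g)=r'\circ g\circ s.
\]
Both are well defined since a composite of continuous maps is again continuous, so~$S(h)\in\C(\X,\X')$ and~$R(g)\in\C(\Y,\Y')$.

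First I would check the retraction identity. For~$h\in\C(\Y,\Y')$,
\[
R(S(h))=r'\circ(s'\circ h\circ r)\circ s=(r'\circ s')\circ h\circ(r\circ s)=\id_{\Y'}\circ h\circ\id_\Y=h,
\]
so~$R\circ S=\id_{\C(\Y,\Y')}$, which is exactly the section-retraction condition.

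The substance of the proof is to show that~$S$ and~$R$ are continuous (resp.\ computable). The key ingredient is that the composition map~$\circ:\C(\Y,\Z)\times\C(\X,\Y)\to\C(\X,\Z)$ is computable; this is a standard consequence of the exponential law~$\C(\X\times\Y,\Z)\cong\C(\X,\C(\Y,\Z))$ together with the computability of~$\eval$, both recalled above. Fixing one of its arguments then yields continuous (resp.\ computable) pre- and post-composition operators, and~$S$,~$R$ are obtained by applying such operators at the fixed maps~$s,r,s',r'$. I expect this to be the main obstacle, since it requires invoking the computability of composition and the fact that partially applying it to a fixed function preserves the relevant class: in the computable-retract case one must verify that~$s,r,s',r'$ being computable \emph{points} of the respective function spaces suffices to make~$S,R$ computable, not merely continuous, whereas in the continuous-retract case one only needs them to be continuous.

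Finally, the ``in particular'' claim is the special case~$\Y'=\X'=\sierp$: the pair~$(\id_\sierp,\id_\sierp)$ trivially makes~$\sierp$ a computable retract of itself, and since~$\O(\W)=\C(\W,\sierp)$ by definition, the general statement gives that~$\O(\Y)=\C(\Y,\sierp)$ is a continuous (resp.\ computable) retract of~$\C(\X,\sierp)=\O(\X)$.
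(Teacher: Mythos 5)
Your proposal is correct and follows exactly the paper's own argument: the same maps~$S(h)=s'\circ h\circ r$ and~$R(g)=r'\circ g\circ s$, the same verification that~$R\circ S=\id$ via~$r\circ s=\id_\Y$ and~$r'\circ s'=\id_{\Y'}$, and the same specialization~$\X'=\Y'=\sierp$ for the~$\O(\Y)$,~$\O(\X)$ case. The only difference is that you spell out the continuity/computability of~$S$ and~$R$ (via computability of composition and partial evaluation at computable points), which the paper leaves implicit in the phrase ``easily a section-retraction.''
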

\begin{proof}
Let~$(s,r)$ and~$(s',r')$ be sections-retractions for~$\Y,\X$ and~$\Y',\X'$ respectively. Let~$S:\C(\Y,\Y')\to\C(\X,\X')$ map~$f$ to~$s'\circ f\circ r$ and~$R:\C(\Y,\Y')\to\C(\X,\X')$ map~$g$ to~$r'\circ g\circ s$. The pair~$(S,R)$ is easily a section-retraction. The case of~$\O(\Y),\O(\X)$ is obtained by taking~$\X'=\Y'=\sierp$.
\end{proof}

%
%

If~$A$ is a \emph{closed} subset of~$X$ then the topology of the represented space~$\A$ coincides with the subspace topology. Moreover,
\begin{lemma}\label{lem_retract_closed}
If~$A$ is a closed subset of~$\X$ then~$\O(\A)$ is a continuous retract of~$\O(\X)$. If~$A$ is a~$\Pi^0_1$-subset of~$\X$ then~$\O(\A)$ is a computable retract of~$\O(\X)$.
\end{lemma}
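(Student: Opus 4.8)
The plan is to use the two obvious candidate maps and to verify that closedness of~$A$ is exactly what is needed to make the section computable. Let~$i\colon\A\to\X$ denote the inclusion, which is computable since~$\dom(\delta_A)\subseteq\dom(\delta_X)$ and the identity on names realizes it. I define the retraction~$r\colon\O(\X)\to\O(\A)$ by~$r(U)=U\cap A=i^{-1}(U)$, i.e.~$\cf_{r(U)}=\cf_U\circ i$. This is just the functorial action obtained by precomposing a realizer of~$\cf_U$ with a realizer of~$i$, and precomposition by a fixed computable map is computable (using that $\eval$ is computable together with currying); hence~$r$ is computable, in particular continuous. Note that this half uses nothing about~$A$ beyond its being a subset.

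For the section I would take~$s\colon\O(\A)\to\O(\X)$,~$s(V)=V\cup\comp A$. The point is that~$s(V)\cap A=V$ because~$V\subseteq A$ and~$\comp A\cap A=\emptyset$, so~$r\circ s=\id_{\O(\A)}$ as soon as~$s$ is well defined and computable with the expected behaviour. To build a realizer~$G$ of~$s$, recall that a name of~$V\in\O(\A)$ is a name of a realizer of~$\cf_V\colon\A\to\sierp$, which operationally is a procedure that, reading finite prefixes of a name~$p$ of a point~$x\in A$, eventually emits a nonzero digit precisely when~$x\in V$. On input a name~$p$ of an arbitrary~$x\in\X$, the realizer~$G$ runs two searches in parallel: search~(B) applies this procedure to~$p$ and fires if it ever emits a nonzero digit; search~(A) fires if~$x\in\comp A$. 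Search~(A) is available because~$A$ is closed: then~$\comp A$ is open, so~$\cf_{\comp A}\colon\X\to\sierp$ is continuous and its realizer, used as fixed advice, semi-decides~$x\in\comp A$ from~$p$. Finally~$G$ fires iff~(A) or~(B) fires.

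I would then check correctness by distinguishing two cases. If~$x\notin A$, search~(A) eventually fires, so~$G(p)$ is a name of~$\top$, consistent with~$x\in\comp A\subseteq s(V)$; the possibly erratic behaviour of search~(B) on a name outside~$\dom(\delta_A)$ is harmless, since a~$\sierp$-computation can only accumulate positive information and~(A) already forces the output to~$\top$. If~$x\in A$, then~$p\in\dom(\delta_A)$, search~(A) never fires, and~$G(p)$ is a name of~$\top$ iff~(B) fires iff~$x\in V$; this simultaneously shows that~$s(V)$ is open with~$s(V)\cap A=V$ and that the restriction of~$G$ to~$\dom(\delta_A)$ is again a realizer of~$\cf_V$, so~$r(s(V))=V$ holds at the level of represented spaces. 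The construction of~$G$ from the name of~$V$ is uniform and continuous; it is computable exactly when the advice realizing~$\cf_{\comp A}$ can be taken computable, i.e.\ when~$\comp A$ is a computable open set, which is the meaning of~$A$ being a~$\Pi^0_1$-subset. This gives the continuous statement in general and the computable statement for~$\Pi^0_1$ sets.

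The step I expect to be the crux is the well-definedness of~$G$ on all of~$\dom(\delta_X)$: the realizer of~$\cf_V$ is only guaranteed to behave correctly on names of points of~$A$, so feeding it a name of some~$x\notin A$ could make search~(B) fire spuriously. The reason this does not break the argument is precisely the monotone, positive-information nature of~$\sierp$-valued computation together with closedness of~$A$: wherever~(B) might misbehave, namely off~$A$, search~(A) has already committed the output to~$\top$, which is the correct value there. Thus closedness is used in exactly one place, to turn the default value~$\top$ outside~$A$ into a continuously (computably) detectable event.
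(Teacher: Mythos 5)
Your proposal is correct and uses exactly the same construction as the paper: the paper's proof consists of the single line defining~$r(V)=V\cap A$ and~$s(U)=U\cup A^c$, leaving all verification implicit. The details you supply --- computability of~$r$ by precomposition with the inclusion, and the parallel-search realizer for~$s$ whose correctness hinges on semi-deciding membership in the open (resp.\ computably open) set~$A^c$ --- are precisely the intended justification behind the paper's one-line proof.
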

\begin{proof}
Let~$r:\O(\X)\to\O(\A)$ map~$V$ to~$V\cap A$ and~$s:\O(\A)\to\O(\X)$ map~$U$ to~$U\cup A^c$.
\end{proof}


We will need the following result in several places.
\begin{lemma}\label{lem_retract_NN_NN2}
$\Baire$ is a computable retract of~$\C(\Baire,2)$.
\end{lemma}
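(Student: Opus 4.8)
The plan is to construct an explicit section-retraction $(s,r)$ with $s:\Baire\to\C(\Baire,2)$ and $r:\C(\Baire,2)\to\Baire$, both computable and total, satisfying $r\circ s=\id_\Baire$. The idea is to encode an element $\alpha\in\Baire$ as a continuous decision function $f_\alpha=s(\alpha)$ whose behaviour along a carefully chosen family of probe points reveals the values $\alpha(n)$ in unary.

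For each $n$ I would consider the point $q_n=(n,1,1,1,\dots)=n\,1^\N$ together with the points $p_{n,m}=(n,1^m,0,0,\dots)$ for $m\in\N$; the key observation is that $p_{n,m}\to q_n$ in $\Baire$ as $m\to\infty$. I would then define $f_\alpha$ by the finite-prefix rule: on input $p$ read $n=p(0)$ and scan $p(1),p(2),\dots$; if a prefix of the form $(n,1^{\alpha(n)})$ is reached, output $0$, and if a symbol different from $1$ is seen at some position $i\le\alpha(n)$ (so that fewer than $\alpha(n)$ leading ones occur), output $1$. Every $p$ is decided after reading finitely many symbols, so $f_\alpha$ is a total continuous function, and since the rule only consults $\alpha(n)$ the assignment $\alpha\mapsto f_\alpha$ is computable, giving $s$. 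By construction $f_\alpha(p_{n,m})=1$ for $m<\alpha(n)$, $f_\alpha(p_{n,m})=0$ for $m\ge\alpha(n)$, and $f_\alpha(q_n)=0$.

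For the retraction I would set $r(f)(n)=\min\{m:f(p_{n,m})=f(q_n)\}$. Since $\eval$ is computable, each bit $f(p_{n,m})$ and $f(q_n)$ is computable from a name of $f$, so this is a computable search; applied to $f=f_\alpha$ it returns the least $m$ with $f_\alpha(p_{n,m})=0$, which is exactly $\alpha(n)$, establishing $r\circ s=\id_\Baire$.

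The main point to verify is that $r$ is \emph{total}, i.e.\ that the search terminates for every $f\in\C(\Baire,2)$ and not merely for the encoded functions. This is exactly where the choice of probe points pays off: because $p_{n,m}\to q_n$ and $f$ is (sequentially) continuous into the discrete space $2$, the bits $f(p_{n,m})$ are eventually equal to $f(q_n)$, so some $m$ always satisfies $f(p_{n,m})=f(q_n)$ and the minimum exists. Hence $r$ is a total computable map and $(s,r)$ is a computable section-retraction, proving that $\Baire$ is a computable retract of $\C(\Baire,2)$.
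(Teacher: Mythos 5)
Your proof is correct and follows essentially the same approach as the paper: both encode $\alpha(n)$ in the modulus of continuity of a clopen set along a family of probe points converging to a designated point (your $p_{n,m}\to q_n$ versus the paper's $c_{n,k}\to c_n$), and recover it by evaluation, with continuity of the clopen set guaranteeing the recovery terminates. The only differences are cosmetic: you search for the least $m$ where the value stabilizes, while the paper reads off the largest perturbation position excluded from the set.
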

\begin{proof}
Each clopen set~$C\in\C(\Baire,2)$ encodes a function~$f:\N\to\N$, which is a kind of modulus of continuity of~$C$ at certain points, defined as follows. For each~$n$, let~$c_n:\N\to\N$ be the constant function with value~$n$, and~$c_{n,k}(i)=n$ for~$i\neq k$,~$c_{n,k}(k)=n+1$. If~$C$ does not contain~$c_n$, then let~$f(n)=0$. If~$C$ contains~$c_n$, then let~$f(n)=\max\{k\geq 1:c_{n,k}\notin C\}$. We then define~$r(C):=f$.

Every~$f:\N\to\N$ is encoded in some~$C\in\C(\Baire,2)$, defined as follows. For each~$n\in\N$, if~$f(n)=0$ then let~$C\cap [n]=\emptyset$. If~$f(n)=k+1$ then let~$C\cap[n]=\{g:g(k+1)=n\}$. We then define~$s(f):=C$.

The functions~$r$ and~$s$ are computable and one easily checks that~$r\circ s(f)=f$ for all~$f$.
\end{proof}

\section{A non-Borel set}\label{sec_nonborel}
On Polish spaces, the Borel hierarchy built from the open sets coincides with the hierarchy lifted from~$\Baire$ by the representation: for instance a set is a countable union of closed sets if and only if its pre-image is a countable union of closed sets.

On quasi-Polish spaces, the same holds if the definition of the Borel hierarchy is slightly amended: for instance~$\SSigma^0_2$-sets are not countable unions of closed sets, but countable unions of differences of open sets.

Here we show that on admissibly represented spaces this is no more true in general: in the space~$\O(\Baire)$ there exists a~$\Sigma^0_2$-subset of~$\O(\Baire)$ that is not even Borel.

To build this set, we first work on an intermediate space.
\subsection{Product topology}
We have seen that two represented spaces~$\X$ and~$\Y$ naturally induce a third represented space~$\X\times \Y$. The topology induced by that representation is not in general the product topology, but its sequentialization.

A simple example is given by~$\X=\Baire$ and~$\Y=\O(\Baire)$. The evaluation map~$\Baire\times\O(\Baire)\to\sierp$ is continuous (and computable), however it is not continuous w.r.t.\ the product topology, because~$\Baire$ is not locally compact (see \cite{EscardoH01} for more details on this topic). In other words the set~$\{(f,O)\in\Baire\times\O(\Baire):f\in O\}$ is not open for the product topology (but it is sequentially open, or open for the topology induced by the representation). It is even worse.
\begin{proposition}\label{prop_not_borel}
$E=\{(f,O)\in\Baire\times\O(\Baire):f\in O\}$ is not Borel for the product topology.
\end{proposition}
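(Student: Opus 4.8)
The plan is to argue by contradiction, assuming $E$ is Borel for the product topology, and to produce two points of $\Baire\times\O(\Baire)$ that are forced to lie in the same Borel type yet disagree on membership in $E$. First I would reduce to a countable amount of data: the Borel $\sigma$-algebra of any space is the union, over all countable families $\mathcal{C}$ of open sets, of the $\sigma$-algebras $\sigma(\mathcal{C})$ (this union is itself a $\sigma$-algebra containing every open set), so the assumption yields a countable family $\{W_m\}_{m\in\N}$ of product-open subsets of $\Baire\times\O(\Baire)$ with $E\in\sigma(\{W_m\})$. Consequently $E$ is a union of equivalence classes of the relation identifying two points that lie in exactly the same $W_m$'s. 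It then suffices to find $f\in\Baire$ with $(f,\Baire)$ and $(f,\Baire\setminus\{f\})$ lying in the same $W_m$'s: indeed $(f,\Baire)\in E$ while $(f,\Baire\setminus\{f\})\notin E$, contradicting this invariance of $E$.

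One inclusion is automatic. The basic product-open sets have the form $[s]\times\{O:K\subseteq O\}$ with $s$ a finite string and $K\subseteq\Baire$ compact (the sets $\{O:K\subseteq O\}$ being exactly the basic Scott-open subsets of $\O(\Baire)$), so every $W_m$ is upward closed in its second coordinate. Since $\Baire\setminus\{f\}\subseteq\Baire$, this gives $(f,\Baire\setminus\{f\})\in W_m\Rightarrow(f,\Baire)\in W_m$ for every $m$ and every $f$. The real task is therefore to choose $f$ securing the reverse implications, that is, $f$ avoiding the \emph{bad} sets $B_m=\{f:(f,\Baire)\in W_m\text{ and }(f,\Baire\setminus\{f\})\notin W_m\}$.

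The heart of the argument, and the step where the non-local-compactness of $\Baire$ enters, is to show each $B_m$ is meager. For $f\in B_m$, openness of $W_m$ supplies a basic rectangle $[s]\times\{O:K\subseteq O\}\subseteq W_m$ with $f\in[s]$; then for any $f'\in[s]$ with $f'\notin K$ one has $K\subseteq\Baire\setminus\{f'\}$, whence $(f',\Baire\setminus\{f'\})\in W_m$ and so $f'\notin B_m$. Thus $B_m\cap[s]\subseteq K$. Since $\Baire$ is Lindel\"of, countably many such cylinders cover $B_m$, exhibiting it as a subset of a $\sigma$-compact set; as compact subsets of $\Baire$ are nowhere dense, $B_m$ is meager. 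By the Baire category theorem there is $f\in\Baire$ outside $\bigcup_m B_m$, and this $f$ satisfies $(f,\Baire)\in W_m\iff(f,\Baire\setminus\{f\})\in W_m$ for all $m$, giving the desired contradiction.

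I expect the main obstacle to be the two reductions that make the diagonalization effective: first passing from an arbitrary Borel set to a fixed \emph{countable} family $\{W_m\}$ of generators even though $\O(\Baire)$ is not countably based, and second establishing that the sets $B_m$ are $\sigma$-compact. The latter is exactly where the argument exploits that cylinders $[s]$ are never compact in $\Baire$, the same failure of local compactness that already prevents $E$ from being open; here it is amplified through Baire category so as to defeat every Borel class at once, rather than only the open sets.
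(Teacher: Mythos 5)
Your proof is correct, and its topological core is the same as the paper's: both arguments hinge on the observation that a product-open neighborhood of $(f,\Baire)$ contains a rectangle $[s]\times\U_K$, where $\U_K=\{O\in\O(\Baire):K\subseteq O\}$, so that every $f'\in[s]\setminus K$ has $(f',\Baire\setminus\{f'\})$ in that same open set, combined with the facts that compact subsets of $\Baire$ are nowhere dense and with the Baire category theorem. Where you genuinely differ is the bookkeeping for the Borel structure. The paper runs an induction over the Borel hierarchy: it shows that every Borel set $A$ admits a dense $G_\delta$ set $G$ such that $(f,\Baire)\in A\iff(f,\Baire\setminus\{f\})\in A$ for all $f\in G$, by checking basic rectangles and closure under complements and countable unions. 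You instead collapse $E$ into a countably generated sub-$\sigma$-algebra $\sigma(\{W_m\})$, note that $E$ must be a union of atoms of this family, and then only need to treat the countably many open generators $W_m$, showing each bad set $B_m$ is meager. Your organization buys something concrete: because your generators $W_m$ are arbitrary (not basic) open sets, your cylinder-grouping/Lindel\"of step handles them directly, whereas the paper's induction, verifying only basic rectangles plus countable unions, implicitly relies on every product-open set being a \emph{countable} union of basic rectangles --- true because the product of the second-countable space $\Baire$ with the hereditarily Lindel\"of space $\O(\Baire)$ is hereditarily Lindel\"of, but not spelled out there. Conversely, the paper's formulation yields slightly more information, namely an explicit dense $G_\delta$ of good parameters attached to every Borel set. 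One shared caveat: like the paper, you take for granted that the sets $\U_K$ form a basis of the topology on $\O(\Baire)$, i.e.\ that the Scott topology coincides with the compact-open topology; this consonance of $\Baire$ is legitimate relative to the paper's stated conventions, but it is an assumption both proofs inherit rather than establish.
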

\begin{proof}
We prove that for every Borel set~$A$, there exists a dense~$G_\delta$-set~$G\subseteq\Baire$ such that for every~$f\in G$,~$(f,\Baire\setminus\{f\})\in A\iff (f,\Baire)\in A$. It implies the result as it is obviously false for the set~$E$. To prove it, we show that the class of sets satisfying this condition contains the open sets in the product topology and is closed under taking complements and countable unions, which implies that this class contains the Borel sets.

First, consider a basic open set~$A=[u]\times \U_K$ where~$u$ is a finite sequence of natural numbers,~$K$ is a compact subset of~$\Baire$ and~$\U_K=\{O\in\O(\Baire):K\subseteq O\}$. Define~$G=[u]^c\cup [u]\setminus K$, which is a dense open set. For~$f\in[u]^c$, no~$(f,O)$ belongs to~$A$. For~$f\in[u]\setminus K$, both~$(f,\Baire\setminus \{f\})$ and~$(f,\Baire)$ belong to~$A$.

If~$A$ satisfies the condition with a dense~$G_\delta$-set~$G$, then~$A^c$ satisfies the condition with the same~$G$. If~$A_i$ satisfy the condition with dense~$G_\delta$-sets~$G_i$ then~$\bigcup_i A_i$ satisfies the condition with~$G=\bigcap_i G_i$.
\end{proof}


\subsection{The space of open subsets of the Baire space}
We can now prove the result. Note that in the statement, the class of Borel subsets of~$\O(\Baire)$ is as usual the smallest class containing the open subsets and closed under taking complement and countable intersections.
\begin{theorem}\label{thm_sigma2}
There is a~$\Sigma^0_2$-subset of~$\O(\Baire)$ which is not Borel.
\end{theorem}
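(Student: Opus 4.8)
The theorem wants a $\Sigma^0_2$-subset of $\O(\Baire)$ that is not Borel. I already have Proposition~\ref{prop_not_borel}, which says the membership set $E = \{(f,O) : f \in O\} \subseteq \Baire \times \O(\Baire)$ is not Borel for the \emph{product} topology. The plan is to transfer this failure of Borelness from the product-topology setting into a genuine non-Borel subset of the single space $\O(\Baire)$, where ``Borel'' now means in the sequential (representation-induced) topology. The key tension to exploit is exactly the gap between the product topology and the finer representation topology: the set $E$ is open (hence $\Sigma^0_1$) in the representation topology but, by the proposition, not even Borel in the coarser product topology.

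\begin{proof}[Proof sketch]
The strategy is to find a single continuous (indeed computable) map $\Phi \colon \O(\Baire) \to \Baire \times \O(\Baire)$ that ``codes a point of $\Baire$ together with an open set'' in such a way that the preimage $\Phi^{-1}(E)$ is a $\Sigma^0_2$-subset of $\O(\Baire)$, while the non-Borelness of $E$ for the product topology forces $\Phi^{-1}(E)$ to be non-Borel. Concretely, I would build $\Phi$ so that it extracts from an open set $W \in \O(\Baire)$ a pair $(f_W, O_W)$, arranging that the first coordinate $f_W$ ranges over a copy of $\Baire$ and the second coordinate $O_W$ ranges over $\O(\Baire)$, and so that the diagonal information ``$f_W \in O_W$'' is recorded by a $\Sigma^0_2$-condition on $W$.
\end{proof}

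\textbf{The main steps.}

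First I would use the computable retractions already established: by Lemma~\ref{lem_retract_NN_NN2}, $\Baire$ is a computable retract of $\C(\Baire,2)$, and since $\C(\Baire,2)$ embeds into $\O(\Baire)$ (a clopen set is in particular open), I can realize both a copy of $\Baire$ and a copy of $\O(\Baire)$ as retracts sitting inside $\O(\Baire)$; Lemma~\ref{lem_retract_open} then lets me assemble these into a map on $\O(\Baire)$ itself. Second, I would define the target $\Sigma^0_2$-set: the natural candidate is the set of open sets $W$ whose ``first-coordinate point'' eventually lands inside its ``second-coordinate open set,'' which unwinds to a countable union over stages of a closed (or difference-of-open) condition, hence $\Sigma^0_2$. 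The crucial upper-bound step is verifying that membership in $E$ pulls back, through my coding $\Phi$, to exactly such a $\Sigma^0_2$-condition---this uses that $f \in O$ is an open (i.e.\ $\Sigma^0_1$) condition in the representation topology, so after composing with the continuous $\Phi$ it stays low in the Borel hierarchy.

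\textbf{The hard part.}

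The real obstacle is the lower bound: showing $\Phi^{-1}(E)$ is \emph{not} Borel. The subtlety is that Borelness is measured in the fine representation topology on $\O(\Baire)$, whereas Proposition~\ref{prop_not_borel} gives non-Borelness only in the coarse product topology on $\Baire \times \O(\Baire)$. So I cannot simply pull back a non-Borel set along a continuous map and conclude, because continuity with respect to the fine topologies is the wrong direction. The decisive point I would have to establish is that my coding map $\Phi$ is \emph{continuous for the product topology on the target} (not merely for the representation topology)---equivalently, that $\Phi$ factors through the product-topological structure. If $\Phi$ is continuous into $\Baire \times \O(\Baire)$ equipped with the product topology, then $\Phi^{-1}$ of a non-Borel (product-topology) set is non-Borel in $\O(\Baire)$, and the argument closes. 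Engineering $\Phi$ so that it is simultaneously (a) continuous for the product topology on the target and (b) pulls $E$ back to a $\Sigma^0_2$-set is the heart of the proof, and I expect it to hinge on choosing the coding of the first coordinate to be locally-compact-friendly, so that the otherwise problematic evaluation $f \in O$ becomes tame enough to keep $\Phi$ product-continuous while the global diagonal obstruction survives. An alternative, possibly cleaner route would be to carry the dense-$G_\delta$ diagonalization argument of Proposition~\ref{prop_not_borel} directly on $\O(\Baire)$: show that every Borel subset $A \subseteq \O(\Baire)$ agrees, on a comeager set of inputs, on two open sets that my $\Sigma^0_2$-set separates, thereby contradicting $A = \Phi^{-1}(E)$; this reuses the closure-under-complements-and-countable-unions skeleton already proved and sidesteps the need to track topologies across a product.
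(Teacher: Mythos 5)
Your high-level plan---combine Proposition \ref{prop_not_borel} with the retraction of Lemma \ref{lem_retract_NN_NN2} to transfer non-Borelness into $\O(\Baire)$---is indeed the paper's strategy, but the transfer mechanism you describe is backwards, and this is a genuine gap rather than a presentational one. You ask for a \emph{continuous} (even computable) map $\Phi\colon\O(\Baire)\to\Baire\times\O(\Baire)$ such that $\Phi^{-1}(E)$ is $\Sigma^0_2$ and non-Borel. This is self-defeating: $E$ is open for the representation topology on $\Baire\times\O(\Baire)$, so if $\Phi$ is continuously realizable then $\Phi^{-1}(E)$ is open in $\O(\Baire)$, hence Borel. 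Your fallback requirement, that $\Phi$ be continuous for the \emph{product} topology on the target, is equally hopeless: every nonempty Scott-open subset of $\O(\Baire)$ contains the top element $\Baire$ (Scott-open sets are upward closed under inclusion), so any continuous map from $\O(\Baire)$ to the Hausdorff space $\Baire$ is constant, and the first coordinate of your coding map could carry no information. Moreover, even granting such a $\Phi$, the inference ``$\Phi$ continuous and $E$ non-Borel, therefore $\Phi^{-1}(E)$ non-Borel'' is invalid: continuity makes preimages of \emph{Borel} sets Borel; it says nothing about preimages of non-Borel sets (a constant map pulls any non-Borel set back to $\emptyset$ or the whole space).

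What the paper does, and what your sketch is missing, is a section--retraction pair with continuity and measurability distributed the opposite way. The map $r\colon\O(\Baire)\to\Baire\times\O(\Baire)$ (your $\Phi$) is \emph{not} continuous but only $\Sigma^0_2$-measurable: the clopen-decoding retraction $r_1\colon\O(\Baire)\to\Baire$ obtained from Lemma \ref{lem_retract_NN_NN2} by reading clopen sets as open sets is necessarily discontinuous, since it requires negative information, and this discontinuity is precisely the source of the $\Sigma^0_2$ (rather than merely open) complexity of $F:=r^{-1}(E)$. The non-Borelness then transfers through the \emph{section} $s\colon\Baire\prodtimes\O(\Baire)\to\O(\Baire)$, which is continuous \emph{from the product topology} and satisfies $r\circ s=\id$: if $F$ were Borel, then $E=s^{-1}(F)$ would be Borel for the product topology, contradicting Proposition \ref{prop_not_borel}. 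So the non-continuous direction yields the $\Sigma^0_2$ upper bound, while the continuous direction, going the other way, yields the lower bound; your proposal tries to make one map do both jobs, which cannot work. Your alternative suggestion of redoing the dense-$G_\delta$ argument directly inside $\O(\Baire)$ is too vague to evaluate---you would still need to exhibit the $\Sigma^0_2$ set and verify that it violates the agreement property, which essentially forces you back to the construction above.
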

\begin{proof}
We show that~$\Baire\prodtimes \O(\Baire)$ is a retract of~$\O(\Baire)$ in some way. We build: 
\begin{itemize}
\item A continuous function~$s:\Baire\prodtimes\O(\Baire)\to\O(\Baire)$,
\item A~$\Sigma^0_2$-measurable function~$r:\O(\Baire)\to\Baire\times\O(\Baire)$,
\item Such that~$r\circ s=\id$.
\end{itemize}
First, these ingredients enable to derive the result. Indeed, let~$E$ from Proposition \ref{prop_not_borel} and~$F:=r^{-1}(E)\subseteq\O(\Baire)$. As~$E$ is open in~$\Baire\times\O(\Baire)$,~$F$ is~$\Sigma^0_2$. However~$F$ is not Borel, otherwise~$E=s^{-1}(F)$ would be Borel in~$\Baire\prodtimes\O(\Baire)$.

Let us now build~$s$ and~$r$. First observe that~$\O(\Baire)\cong \O(\Baire)\times\O(\Baire)$ and that the topology on~$\O(\Baire)\times\O(\Baire)$ coincides with the product topology.

Lemma \ref{lem_retract_NN_NN2} provides two computable functions~$r_0:\C(\Baire,2)\to \Baire$ and~$s_0:\Baire\to\C(\Baire,2)$ such that~$r_0\circ s_0=\id$. Observing that~$\sierp$ and~$2$ are two represented spaces with the same underlying set and that~$\O(\Baire)\cong\C(\Baire,\sierp)$, those functions can be interpreted as~$r_1:\O(\Baire)\to\Baire$ and~$s_1:\Baire\to\O(\Baire)$ with~$r_1\circ s_1=\id$. However, while~$s_1$ is still computable,~$r_1$ is not computable but is~$\Sigma^0_2$-measurable.

Let us simply pair~$s_1$ and~$r_1$ with the identity on~$\O(\Baire)$: let~$s(f,O')=(s_1(f),O'))$ and~$r(O,O')=(r_1(O),O')$. 
%
%
\end{proof}

In particular, that~$\Sigma^0_2$-set is not a countable union of differences of open sets, as it should be on Polish or quasi-Polish spaces. More generally, it is not a countable boolean combination of open sets.

In order to overcome the mismatch between the hierachy inherited from~$\Baire$ via the representation and the class of Borel sets, one may attempt to change the definition of Borel sets. In \cite{NorbergV97} the Borel sets are redefined as the smallest class containing the open sets and the saturated compact sets. We observe here that this class is too large in the space~$\O(\Baire)$. First, if~$U\subseteq\Baire$ is open then the set~$\{V\in\O(\Baire):U\subseteq V\}$ is compact and saturated in~$\O(\Baire)$. From this it is easy to see that the set built above is Borel in this weaker sense. However this notion of Borel sets is too loose, because compact saturated sets do not usually have a Borel pre-image. For instance, the singleton~$\{\Baire\}$ is compact saturated but its pre-image is a~$\PPi^1_1$-complete set, hence is not Borel.

We leave the following questions for future work:
\begin{itemize}
\item What do the~$\SSigma^0_2$-subsets of~$\O(\Baire)$ look like?
\item Is it possible to modify the definition of Borel sets on~$\O(\Baire)$ to match exactly the sets that have a Borel pre-image under the representation?
\end{itemize}

\section{Parametrizations}\label{sec_param}
In this section we investigate whether a collection of objects~$A$ can be parametrized by another collection~$B$. Formally,~$A$ and~$B$ are represented spaces and~$A$ is parametrized by~$B$ if~$A$ is a continuous image of~$B$. We will also be interested in the computable version.



Given~$A$, one tries to find the ``simplest'' space~$B$ such that~$A$ is a continuous image of~$B$. Simplicity can be measured by restricting our attention to spaces~$B$ that are subspaces of~$\Baire$, and identifying their minimal descriptive complexity. The Borel hierachy is too fine-grained for this: $A$ is a continuous image of some Borel subset of~$\Baire$ iff it a continuous image of a~$\Sigma^1_1$-subset of~$\Baire$ iff it is a continuous image of~$\Baire$. Therefore, the complexity of parametrizing a space is better measured by the hyperprojective hierarchy~$\{\SSigma^1_\alpha\}$ where~$\alpha$ ranges over the countable ordinals. It happens that the levels of this hierarchy correspond to a hierarchy of spaces, the Kleene-Kreisel spaces~$\KK{\alpha}$, as stated by Theorem \ref{thm_image_kk} below: 
a represented space is a continuous image of some~$\SSigma^1_\alpha$-subset of~$\Baire$ if and only if it is a continuous image of~$\KK{\alpha}$. 

\subsection{Kleene-Kreisel functionals}
The Kleene-Kreisel functionals were introduced independently by Kleene \cite{kleene1959countable} and Kreisel \cite{Kreisel59}. They can be defined in the framework of represented spaces, as in \cite{SchroderS15b} for instance.

If~$\alpha$ is a countable ordinal then the space~$\KK{\alpha}$ is defined inductively as follows. Let~$\KK{0}=\N$,~$\KK{\alpha+1}=\C(\KK{\alpha},\N)$ and~$\KK{\lambda}=\prod_{i\in\omega}\KK{\beta(\lambda,i)}$ for a limit ordinal~$\lambda$ and a fixed numbering~$\beta(\lambda,i)$ of the ordinals smaller than~$\lambda$.

We will need the following results.

\paragraph{Functionals of finite type.}
The following results can be found in \cite{Normann80} and were proved by Kleene and Kreisel.
\begin{lemma}\label{lem_kk_dense}
For each~$k\in\N$, $\KK{k}$ contains a dense computable sequence.
\end{lemma}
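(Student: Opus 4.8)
The plan is to argue by induction on~$k$. For the base case~$k=0$ we have~$\KK{0}=\N$ with the discrete topology, where a sequence is dense exactly when it is surjective; the identity enumeration~$n\mapsto n$ is computable and surjective, hence a dense computable sequence. For the inductive step I assume a dense computable sequence~$(d_n)_{n\in\N}$ in~$\KK{k}$ is given and produce one in~$\KK{k+1}=\C(\KK{k},\N)$.

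First I would pin down the basic neighbourhoods of a functional~$f\in\C(\KK{k},\N)$. Since~$\N$ is discrete, every continuous~$f$ is locally constant (each~$f^{-1}(\{n\})$ is open), and the canonical topology on the function space can be described so that a basic neighbourhood of~$f$ has the form~$\{g : g\res{K}=f\res{K}\}$ for a compact~$K\subseteq\KK{k}$. As~$f(K)$ is a compact subset of the discrete space~$\N$, it is finite, so~$K$ splits into finitely many compact pieces on each of which~$f$ is constant. Thus density reduces to the following: for every continuous~$f$ and every compact~$K\subseteq\KK{k}$ there is a member~$g$ of the candidate sequence with~$g\res{K}=f\res{K}$. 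I would build the candidate sequence by computably enumerating the total continuous functionals that admit a \emph{finite description} --- intuitively, those computed by a finite decision procedure reading a bounded amount of an input name --- and use the inductively given dense sequence~$(d_n)$ to locate effectively the finitely many regions of~$\KK{k}$ on which values are declared.

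To verify density, given~$f$ and compact~$K$, continuity of~$f$ together with compactness lets me cover~$K$ by finitely many basic open sets of~$\KK{k}$ on each of which~$f$ is constant, and the density of~$(d_n)$ makes this covering data effective. The main obstacle --- and the genuine content of the lemma --- is that such a finite specification matching~$f$ on~$K$ cannot be completed by assigning a default value off~$K$: truncating at the boundary destroys continuity, because the complement of the declared region is closed rather than open, so the resulting~$g$ would fail to be a continuous functional. One must therefore extend any finite specification to a functional that is \emph{total} and \emph{continuous} on all of~$\KK{k}$ and well defined independently of the chosen name. It is exactly here that the inductive structure of~$\KK{k}$ has to be exploited, and I expect this completion step to require a simultaneous induction that produces, alongside the dense sequence, the uniform continuous machinery (an approximation/modulus functional) making the extension effective. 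This is the heart of the Kleene--Kreisel density theorem, for which I would rely on the construction in \cite{Normann80}.
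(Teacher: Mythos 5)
The first thing to note is that the paper does not actually prove this lemma: it is the classical Kleene--Kreisel density theorem, and the paper states it with the attribution that it ``can be found in \cite{Normann80} and was proved by Kleene and Kreisel.'' So your final move --- isolating the extension problem as the genuine content and deferring it to the construction in \cite{Normann80} --- is in substance exactly what the paper does, and your base case and the overall inductive shape are the standard ones.

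There is, however, a genuine error in how you set up the target of the induction. You assert that the canonical topology on $\KK{k+1}=\C(\KK{k},\N)$ has the sets $\{g : g\res{K}=f\res{K}\}$, for $K\subseteq\KK{k}$ compact, as basic neighbourhoods of $f$, i.e.\ that it is the compact-open topology, and you then reduce density to agreement with an arbitrary $f$ on an arbitrary compact $K$. This is correct for $k=0$, but it fails already at the first nontrivial step, $\KK{2}=\C(\Baire,\N)$: the canonical topology of a represented function space is the sequentialization of continuous convergence, which on this space is \emph{strictly finer} than the compact-open topology. (The compact-open topology on $\C(\Baire,\N)$ is completely regular, being generated by the pseudometrics of uniform convergence on compact sets, whereas Schr\"oder proved that the sequential topology on $\N^{\N^\N}$ is not regular; one checks the inclusion of compact-open in the sequential topology using that compact subsets of $\Baire$ are metrizable, hence sequentially compact.) Consequently your reduction targets a strictly weaker property: a computable sequence meeting every nonempty compact-open open set need not meet every nonempty open set of the canonical topology. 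The stronger property is the one this paper actually uses: in Lemma \ref{lem_kk_canon} (and its continuous analogue, Lemma \ref{lem_kkk_canon}), the dense sequence must witness failure of membership in the graph of $\psi$, and the relevant set of counterexamples $y$ is open for the canonical topology (it is semidecidable from names, via $H$ and $R$) but has no reason to be compact-open open. So if your program were carried out as described, it would not yield the lemma in the form in which it is applied. The statement to aim for --- and what Normann's domain-theoretic density theorem (every finite element of the associated domain lies below a total computable one) actually delivers --- is density for the final topology of the representation: a nonempty open set pulls back to a relatively open set of names, hence contains all names extending some extendible finite sequence $\sigma$, and the density theorem produces, uniformly in $\sigma$, a computable functional with a name extending $\sigma$.
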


\begin{theorem}[Theorem 5.22 in \cite{Normann80}]\label{thm_kk_sigma}
Let~$k\in\N$ and~$A\subseteq\Baire$. The following conditions are equivalent:
\begin{itemize}
\item $A\in\Sigma^1_k(\Baire)$,
\item There exists a computable predicate~$R\subseteq\Baire\times\KK{\alpha}\times\N$ such that
\begin{equation*}
x\in A\iff\exists y\in\KK{\alpha}\forall n,R(x,y,n).
\end{equation*}
\end{itemize}
\end{theorem}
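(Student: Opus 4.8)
The plan is to prove the equivalence by induction on $k$, reducing the claim to a statement about quantifying over a single Kleene-Kreisel functional. The essential content is a representation-theoretic fact: membership in $\KK{\alpha}$ for the relevant type level can be coded so that an existential quantifier over $\KK{\alpha}$ together with a universal number quantifier in front of a computable predicate captures exactly one additional layer of the projective-over-Baire hierarchy. I would first isolate the base case. For $k=0$ (or $k=1$, depending on the convention fixed in the statement), a $\Sigma^1_0$ set is just arithmetic/computable after bounded manipulation, and the right-hand side with $y$ ranging over $\KK{0}=\N$ collapses the quantifier over $y$ to a number quantifier, so both sides describe the same class of sets; this is a direct unfolding of definitions.

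For the inductive step I would exploit the recursion $\KK{\alpha+1}=\C(\KK{\alpha},\N)$. The strategy in each direction is as follows. Assuming a $\Sigma^1_{k+1}$ set $A$, write $x\in A\iff\exists z\,\Pi^1_k(x,z)$ with $z\in\Baire$; by the inductive hypothesis applied to the $\Pi^1_k$ (equivalently $\Sigma^1_k$, after complementation) matrix, the inner part is controlled by a functional of type $\KK{\alpha}$. The real work is then to package the outer witness $z\in\Baire$ together with the type-$\alpha$ witness into a single type-$(\alpha+1)$ functional $y$, so that the two existential quantifiers fuse into one quantifier over $\KK{\alpha+1}$, leaving only a computable predicate universally quantified over $n$. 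The converse direction unfolds the quantifier $\exists y\in\KK{\alpha+1}$ into an existential over names in $\Baire$ (a functional is represented by a name in Baire space) followed by a verification that the name genuinely represents a continuous functional and that the predicate holds; the verification that a Baire name is a legitimate $\KK{\alpha}$-name is itself of controlled complexity, and this is what keeps the resulting formula inside $\Sigma^1_{k+1}$.

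The main obstacle, and the step I would treat most carefully, is exactly this coding of witnesses across type levels and the accompanying bookkeeping of complexity. Quantifying over $y\in\KK{\alpha+1}$ is not the same as quantifying over an arbitrary Baire name: one must assert that the name is total and extensional as a functional on $\KK{\alpha}$, and this totality/extensionality condition has its own descriptive complexity that must be absorbed without overshooting the target level. The density result of Lemma~\ref{lem_kk_dense} is likely the key tool here, since a dense computable sequence lets one replace quantification over all of $\KK{\alpha}$ by quantification over a computable sequence of test points, converting the extensionality check into a $\Pi^0$-style condition on the name and thereby preventing complexity blow-up. Managing the limit levels of~$\alpha$ (where $\KK{\lambda}$ is a countable product) would require an analogous fusion of a number quantifier selecting a coordinate with the per-coordinate witnesses, but since the statement as given is indexed by finite~$k$ with a fixed~$\alpha$, I expect the finite-type recursion $\KK{\alpha+1}=\C(\KK{\alpha},\N)$ to carry the entire argument, with Lemma~\ref{lem_kk_dense} doing the heavy lifting in the complexity estimate.
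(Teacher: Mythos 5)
The paper does not prove this statement at all: it is imported verbatim as Theorem 5.22 of \cite{Normann80} and attributed to Kleene and Kreisel, so your proposal can only be compared with the classical argument it implicitly reconstructs. Against that standard, your overall skeleton (induction on $k$, with Lemma \ref{lem_kk_dense} used to collapse quantifiers over functionals) is the right one, but the center of your inductive step has a genuine gap. You write $x\in A\iff\exists z\in\Baire\,B(x,z)$ with $B\in\Pi^1_k$, apply the inductive hypothesis to $B$ ``after complementation'', and then speak of fusing \emph{two existential} quantifiers into one quantifier over $\KK{k+1}$. But complementation turns the existential functional quantifier of the inductive hypothesis into a universal one: what you actually get is $B(x,z)\iff\forall w\in\KK{k}\,\exists n\,\neg S(x,z,w,n)$ with $S$ computable, and at this point there is no second existential witness to pair with $z$. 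Indeed, if the $\Pi^1_k$ matrix really were of the form $\exists w\in\KK{k}\,\forall n\,(\cdots)$, then $\Sigma^1_{k+1}$ would reduce to an existential Baire quantifier in front of a $\Sigma^1_k$ predicate, i.e.\ to $\Sigma^1_k$ itself, collapsing the hierarchy. The missing idea is Skolemization across a type level: since $S$ is computable, whenever $\forall w\,\exists n\,\neg S(x,z,w,n)$ holds, the least-witness map $w\mapsto\mu n.\,\neg S(x,z,w,n)$ is continuous, hence an element of $\C(\KK{k},\N)=\KK{k+1}$, so that $\forall w\,\exists n\,\neg S\iff\exists G\in\KK{k+1}\,\forall w\,\neg S(x,z,w,G(w))$. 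The type bump from $k$ to $k+1$ happens exactly here, and only after this step do you have two existentials ($z$ and $G$) to pair. The residual quantifier $\forall w\in\KK{k}$ now governs a clopen predicate, and this is where Lemma \ref{lem_kk_dense} legitimately enters: a clopen condition that fails somewhere fails on a nonempty open set, hence at a point of the computable dense sequence, so $\forall w\in\KK{k}$ may be replaced by $\forall m\in\N$.

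The second gap is in your converse direction, where you claim that density converts the check that a Baire name ``genuinely represents a continuous functional'' into a $\Pi^0$-style condition, thereby preventing complexity blow-up. This is false, and again would collapse the hierarchy: if being a legitimate $\KK{k}$-name were arithmetic, then $\exists y\in\KK{k}\,\forall n\,R$ would define a $\Sigma^1_1$ set for every $k$. Density is of no help for totality: a partial continuous functional can converge on a dense set of arguments without being total, because its set of divergence points is not open --- density arguments only work for conditions whose failure is witnessed on an open set (which is exactly why they work in the previous paragraph, and in the paper's Lemma \ref{lem_kk_canon}, where the condition being tested is clopen). The correct bookkeeping is that the set $D_k$ of legitimate names is $\Pi^1_{k-1}$ --- precisely the other result the paper imports (Theorem \ref{thm_image_kk}) --- and the converse direction succeeds not because this check is cheap but because it is absorbable: $\exists p\,[\,p\in D_k\wedge\forall n\,R'(x,p,n)\,]$ with a $\Pi^1_{k-1}$ conjunct is still $\Sigma^1_k$. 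Establishing the complexity of $D_k$ is itself intertwined with the normal form in a simultaneous induction, a circularity your sketch does not confront.
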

In particular, every~$\Sigma^1_k$-set is a computable image of a~$\Pi^0_1$-subset of~$\KK{k}$, but Theorem \ref{thm_kk_sigma} says more and we will need it later.

\paragraph{Functionals of countable type.}
In \cite{SchroderS15a} an admissible representation~$\delta_\alpha$ of~$\KK{\alpha}$ is built. Let~$D_\alpha=\dom(\delta_\alpha)$.

\begin{theorem}[\cite{SchroderS15a}]\label{thm_image_kk}
Let~$\alpha$ be a countable ordinal. One has~$D_{\alpha+1}\in\PPi^1_\alpha(\Baire)$. A set~$A\subseteq\Baire$ belongs to~$\SSigma^1_\alpha(\Baire)$ iff it is a continuous image of~$\KK{\alpha}$.
\end{theorem}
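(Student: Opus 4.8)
The statement is taken from \cite{SchroderS15a}; I would reprove it by transfinite induction on $\alpha$, strengthening the hypothesis to control at each stage both the complexity of the domain $D_\alpha$ and of the name-equivalence relation $\sim_\alpha=\{(p,q)\in D_\alpha\times D_\alpha:\delta_\alpha(p)=\delta_\alpha(q)\}$, together with the image characterization itself. Alongside I would record, for $\alpha\geq 1$, the structural facts that $\Baire$ is a continuous retract of $\KK{\alpha}$ and that $\KK{\alpha}\times\KK{\alpha}\cong\KK{\alpha}$ (using $\N\times\N\cong\N$ at a successor and a reindexing of the product at a limit); together these make $\Baire\times\KK{\alpha}$ a continuous retract of $\KK{\alpha}$, which is what lets one carry an auxiliary point of $\Baire$ alongside a functional.

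The complexity of the domain is the cleaner half, and its successor step is the decisive computation. Unfolding $\KK{\alpha+1}=\C(\KK{\alpha},\N)$, a word $p$ lies in $D_{\alpha+1}$ exactly when the continuous partial function $F_p$ it codes is total on $D_\alpha$ with $\N$-valued output and is $\sim_\alpha$-extensional; both requirements have the shape $\forall q\,(q\in D_\alpha\Rightarrow\Phi)$ and $\forall q\,\forall q'\,((q,q')\in\sim_\alpha\Rightarrow\Phi')$ with $\Phi,\Phi'$ arithmetic. Substituting the induction hypothesis $D_\alpha,\sim_\alpha\in\PPi^1_{\alpha-1}$ at a successor $\alpha$, or the countable-intersection description of $D_\lambda$ and $\sim_\lambda$ inherited from the product at a limit, and reading off the definition of the hyperprojective classes, each clause evaluates to $\PPi^1_\alpha$; the identical calculation yields $\sim_{\alpha+1}\in\PPi^1_\alpha$. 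For a limit $\lambda$ one argues directly from $D_\lambda=\{p:\forall i,\ p_i\in D_{\beta(\lambda,i)}\}$.

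The image characterization splits into two inclusions. That a continuous image of $\KK{\alpha}$ is $\SSigma^1_\alpha$ is uniform: a realizer $G$ of a continuous $g:\KK{\alpha}\to\Baire$ gives $x\in g(\KK{\alpha})\iff\exists p\,(p\in D_\alpha\wedge G(p)\text{ is a name of }x)$, and since $p\in D_\alpha$ has the controlled complexity and the remainder is arithmetic, the projection lands in $\SSigma^1_\alpha$. For the converse at level $\alpha+1$, write $x\in A\iff\exists z\,(x,z)\in P$ with $P\in\PPi^1_\alpha$; by the induction hypothesis $\comp P\in\SSigma^1_\alpha$ is a continuous image of $\KK{\alpha}$, say $\comp P=g(\KK{\alpha})$ with $g:\KK{\alpha}\to\Baire\times\Baire$. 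A least-witness construction then turns the $\PPi^1_\alpha$ condition into the totality of a functional: the partial map $w_{x,z}:\KK{\alpha}\to\N$ sending $y$ to the least coordinate on which $g(y)$ differs from $(x,z)$ is continuous and is total precisely when $(x,z)\in P$, i.e.\ precisely when $w_{x,z}$ is a genuine element of $\KK{\alpha+1}$. Packaging $x$ together with the functional $w_{x,z}$, using that $\Baire\times\KK{\alpha+1}$ is a retract of $\KK{\alpha+1}$, produces a continuous surjection onto $A$.

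The main obstacle is precisely this least-witness conversion and the verification that the assembled domain is a continuous image of $\KK{\alpha+1}$ rather than merely a subset of it. Converting a $\forall^{\KK{\alpha}}$ statement with an open matrix into the totality of a single functional in $\KK{\alpha+1}=\C(\KK{\alpha},\N)$ is the transfinite form of the projective/Kleene--Kreisel analogy already visible at finite type in Theorem \ref{thm_kk_sigma}, and it is the point where the higher type is indispensable: a name of an element of $\KK{\alpha+1}$ \emph{is} a continuous functional on $\KK{\alpha}$, so the witness can be produced as such a functional, with computability of evaluation on $\KK{\alpha}$ ensuring the matrix stays closed. The accompanying delicacy is calibrating the limit clauses of the hyperprojective hierarchy so that the outermost $\forall^{\Baire}$ (respectively $\exists^{\Baire}$) sitting on top of a countable intersection (respectively union) of lower levels is absorbed into $\PPi^1_\lambda$ (respectively $\SSigma^1_\lambda$) rather than raising the index; this is exactly what makes the successor and limit computations of $D_{\alpha+1}$ close up at $\PPi^1_\alpha$.
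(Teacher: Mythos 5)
First, a point of comparison: the paper does not prove this statement at all --- Theorem \ref{thm_image_kk} is imported from \cite{SchroderS15a} and used as a black box. The closest material in the paper is Lemma \ref{lem_kkk_canon}, whose least-witness construction is explicitly credited to the proof of Theorem 7.2 in \cite{SchroderS15a}. So your proposal can only be judged on its own merits. Its bookkeeping is sound: the induction tracking $D_\alpha$ and $\sim_\alpha$, the complexity computation for $D_{\alpha+1}$ (totality plus extensionality, each a $\forall^{\Baire}$ over a matrix in which $D_\alpha$ and $\sim_\alpha$ occur negatively), and the direction ``continuous image of $\KK{\alpha}$ implies $\SSigma^1_\alpha$'' are all fine. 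Your least-witness functional $w_{x,z}(y)=\min\{n: g(y)_n\neq(x,z)_n\}$ is also exactly the right device: it shows that a $\PPi^1_\alpha$ set $P$ is, up to isomorphism of represented spaces, the closed set $\Gamma=\{((x,z),w_{x,z}):(x,z)\in P\}\subseteq\Baire\times\KK{\alpha+1}$, whose first projection gives $P$ and then $A$.

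The gap is the final step, and it is the crux of the theorem. What you have produced is a continuous surjection onto $A$ from $\Gamma$, a \emph{closed subset} of $\Baire\times\KK{\alpha+1}$; the theorem demands a continuous surjection from $\KK{\alpha+1}$ \emph{itself}. The fact you invoke --- that $\Baire\times\KK{\alpha+1}$ is a retract of $\KK{\alpha+1}$ --- only replaces the ambient space; it does not pass from the closed subset to the whole space. In the classical case $\alpha=0$ this passage is exactly the fact that every nonempty closed subset of $\Baire$ is a retract of $\Baire$ (prune the tree, repair along the leftmost branch when the input falls off), and that fact has no evident analogue in higher types: $\KK{\alpha+1}$ has no tree structure, membership in the closed set $\Gamma$ cannot be detected continuously so one cannot fall back on a default point, and ``nonempty closed subsets of $\KK{\alpha+1}$ are continuous images of $\KK{\alpha+1}$'' is not a routine lemma --- granting it, your proof closes immediately, and conversely it follows from the theorem by pulling $\Gamma$ back through the representation, so it is essentially equivalent to the hard direction you are trying to prove. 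Note also that the paper's own uses of this circle of ideas go in the \emph{opposite} direction: Lemma \ref{lem_kkk_canon} maps the graph \emph{onto} $\KK{\alpha+1}$ (which is what canonizability needs), and the remark after Theorem \ref{thm_kk_sigma} carefully asserts only that $\Sigma^1_k$-sets are images of $\Pi^0_1$-\emph{subsets} of $\KK{k}$, not of $\KK{k}$. Producing a continuous surjection from $\KK{\alpha+1}$ onto such a closed graph is precisely where the real work in \cite{SchroderS15a} lies, and as written your proof does not supply it.
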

\subsection{Diagonal argument.}
We now present the basic tool used to prove impossibility results about continuous parametrizations. It is a variation on Cantor's diagonal argument. It might be possible to make it an instance of Lawvere's categorical formulation of the diagonal argument \cite{Lawvere69}.

Let~$\Y$ be a represented space. If~$h:\Y\tto \Y$ is a multifunction then a point~$y\in Y$ is a \defin{fixed-point} of~$h$ if~$y\in h(y)$. We say that~$h$ is \defin{fixed-point free} if~$h$ has no fixed-point. A computable fixed-point free multifunction can be thought as an algorithm taking some~$y$ as input (represented by a name) and producing some~$y'\neq y$ as output (which may not be the same for all names of~$y$).

\begin{proposition}\label{prop_continuous}
Let~$\Y$ be a represented space. If~$\Y$ admits a continuous fixed-point free multifunction, then~$\C(\Baire,\Y)$ is not a continuous image of~$\Baire$.
\end{proposition}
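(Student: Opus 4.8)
The plan is to run a refinement of Cantor's diagonal argument. Suppose for contradiction that $\C(\Baire,\Y)$ is a continuous image of $\Baire$, i.e.\ that there is a continuous surjection $\Phi:\Baire\to\C(\Baire,\Y)$. I would first manufacture a continuous \emph{diagonal} map $d:\Baire\to\Y$ by setting $d(f)=\Phi(f)(f)=\eval(f,\Phi(f))$. This is continuous because the evaluation $\eval:\Baire\times\C(\Baire,\Y)\to\Y$ is computable, hence continuous, and because $f\mapsto(f,\Phi(f))$ is continuous (it pairs the identity with the continuous map $\Phi$).

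Next I would bring in the hypothesised continuous fixed-point free multifunction $h:\Y\tto\Y$ and form the composite multifunction $h\circ d:\Baire\tto\Y$, $f\mapsto h(d(f))$. The hard part, and the heart of the argument, is to extract from this \emph{multifunction} an honest \emph{single-valued} continuous function. Concretely: $d$ has a continuous realizer $D:\Baire\to\Baire$ (the domain being $\Baire$ with the identity representation), and by the definition of a continuous multifunction $h$ has a continuous realizer $H$ with $\delta_\Y(H(q))\in h(\delta_\Y(q))$ for all $q$. Then $F:=H\circ D$ satisfies $\delta_\Y(F(f))\in h(d(f))$ for every $f$. Since $\Baire$ carries the identity representation, each point has a unique name, so $F$ descends to a genuine continuous function $g:\Baire\to\Y$ given by $g(f):=\delta_\Y(F(f))$, with $g(f)\in h(d(f))$ for all $f$. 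This is exactly the step that recovers single-valuedness, and it is the only place where the identity representation of the parameter space $\Baire$ is used; it is what lets a merely multivalued fixed-point free map still obstruct the parametrization.

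Finally I would close the diagonal. Because $h$ is fixed-point free, $d(f)\notin h(d(f))$, and therefore $g(f)\neq d(f)=\Phi(f)(f)$ for every $f\in\Baire$. But $g$ is a continuous function $\Baire\to\Y$, so $g\in\C(\Baire,\Y)$, and surjectivity of $\Phi$ yields some $f_0$ with $\Phi(f_0)=g$. Evaluating at $f_0$ gives $g(f_0)=\Phi(f_0)(f_0)=d(f_0)$, contradicting $g(f_0)\neq d(f_0)$. This contradiction shows that no such surjection $\Phi$ exists, which is the claim.
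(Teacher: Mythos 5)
Your proof is correct and takes essentially the same approach as the paper: both diagonalize via $f\mapsto\Phi(f)(f)$, apply a realizer $H$ of the fixed-point free multifunction to a name of that point, and exploit the fact that each $f\in\Baire$ is its own canonical name to collapse the multifunction to a single-valued continuous $g:\Baire\to\Y$ differing from every $\Phi(f)$ at $f$. The only cosmetic differences are that you argue by contradiction with a surjection (the paper shows directly that an arbitrary continuous $\phi$ is not onto) and that you introduce the diagonal map $d$ as an intermediate represented-space map before passing to realizers, rather than composing realizers outright.
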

\begin{proof}
Let~$h:\Y\tto \Y$ have no fixed-point and~$H:\dom(\delta_Y)\to\dom(\delta_Y)$ be a continuous realizer of~$h$. Let~$\phi:\Baire\to\C(\Baire, \Y)$ be continuous. The function~$\phi$ can equivalently be typed as~$\phi:\Baire\times\Baire\to\Y$, let~$\Phi:\Baire\times\Baire\to\dom(\delta_Y)$ be a continuous realizer of~$\phi$. We define~$F:\Baire\to\Y$ by~$F(f):=\delta_Y(H(\Phi(f)(f)))$. The function~$F$ is continuous and is not in the range of~$\phi$ because for each~$f$,~$F$ differs from~$\phi(f)$ at~$f$:~$F(f)=\delta_Y(H(\Phi(f)(f)))\neq \delta_Y(\Phi(f)(f))=\phi(f)(f)$. Therefore~$\phi$ is not onto.
\end{proof}

In other words, in that case~$\C(\Baire,\Y)$ is not a continuous image of any~$\SSigma^1_1$-subset of~$\Baire$.
\begin{remark}
Under the same assumption about~$\Y$, the same argument shows that for any set~$A\subseteq\Baire$,~$\C(A,\Y)$ is not a continuous image of~$A$.
\end{remark}

\subsection{Complexity of computable enumerations}
We will also investigate the computable counterpart of parametrizations.

\begin{definition}
Let~$\X$ be a represented space and~$\Gamma(\N)$ a pointclass. We say that~$\X_c$ is \defin{$\Gamma$-enumerable} if~$\X_c$ is a computable image of some set in~$\Gamma(\N)$.
\end{definition}

The proof of Proposition \ref{prop_continuous} is effective and immediately implies the following effective version.
\begin{proposition}\label{prop_computable}
If~$\Y$ admits a computable fixed-point free multifunction then there is no computable function from~$\Baire$ to~$\C(\Baire,\Y)$ containing all the computable elements in its range.
\end{proposition}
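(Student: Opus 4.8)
The plan is to adapt the diagonal construction from Proposition~\ref{prop_continuous} verbatim, checking at each step that continuity can be replaced by computability and that the hypothesis ``$\Y$ admits a computable fixed-point free multifunction'' supplies exactly the computable realizer we need. So suppose toward a contradiction that $\phi:\Baire\to\C(\Baire,\Y)$ is a computable function whose range contains every computable element of $\C(\Baire,\Y)$. Let $h:\Y\tto\Y$ be a computable fixed-point free multifunction with a computable realizer $H:\dom(\delta_Y)\to\dom(\delta_Y)$, and let $\Phi:\Baire\times\Baire\to\dom(\delta_Y)$ be a computable realizer of $\phi$ (viewed as a map $\Baire\times\Baire\to\Y$ via the exponential isomorphism $\C(\Baire\times\Baire,\Y)\cong\C(\Baire,\C(\Baire,\Y))$, whose underlying translations are computable).

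Next I would define the diagonal function exactly as before, $F(f):=\delta_Y(H(\Phi(f)(f)))$, and observe that it is now \emph{computable}: the map $f\mapsto\Phi(f)(f)$ is computable because $\Phi$ and the diagonal $f\mapsto(f,f)$ are computable, and composing with the computable realizer $H$ and then reading off through $\delta_Y$ keeps us in the computable world. Thus $F$ is a computable element of $\C(\Baire,\Y)$. The same pointwise inequality as in Proposition~\ref{prop_continuous} shows $F(f)\neq\phi(f)(f)$ for every $f$, since $H(\Phi(f)(f))$ names a point of $h(\delta_Y(\Phi(f)(f)))$, which differs from $\delta_Y(\Phi(f)(f))=\phi(f)(f)$ by fixed-point freeness of $h$. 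Hence $F\neq\phi(f)$ for all $f$, so $F$ is a computable element not in the range of $\phi$, contradicting the assumption.

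The main subtlety to get right is purely bookkeeping about realizers in the computable setting: I must ensure that ``$\phi$ has all computable elements in its range'' together with the machinery really forces $F$ itself to be computable, so that the contradiction lands on a computable point rather than a merely continuous one. This is where the strengthened hypothesis matters—a computable (not just continuous) realizer $H$ for $h$ is precisely what promotes $F$ from continuous to computable. Everything else, including the use of the exponential isomorphism and the diagonalization inequality, transfers unchanged from the continuous proof, so I expect no genuine obstacle beyond carefully tracking that each composition preserves computability. Indeed, since the proof of Proposition~\ref{prop_continuous} is already effective, the only real work is to state the correct conclusion and note that every arrow in the construction is computable.
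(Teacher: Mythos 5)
Your proof is correct and is essentially identical to the paper's: the paper also proves this by observing that the diagonal construction $F(f):=\delta_Y(H(\Phi(f)(f)))$ from Proposition~\ref{prop_continuous} becomes computable when $H$ and $\Phi$ are computable, yielding a computable element outside the range of $\phi$. Your write-up just makes explicit the realizer bookkeeping that the paper leaves implicit.
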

\begin{proof}
In the proof of Proposition \ref{prop_continuous}, if~$H$ and~$\Phi$ are computable, then the function~$F$ is computable and is not in the range of~$\phi$.
\end{proof}

Proposition \ref{prop_computable} implies that the computable functions from~$\Baire$ to~$\Y$ are not~$\Sigma^0_1$-enumerable, as the subsets of~$\N$ that are computable images of~$\Baire$ are the~$\Sigma^0_1$-sets. However one would expect the much stronger result that they are not~$\Sigma^1_1$-enumerable. It is possible to obtain such a result under a mild assumption on~$\delta_Y$, usually satisfied in practice.

\begin{definition}
We say that~$A\subseteq\Baire$ is \defin{computably separable} if~$A$ contains a computable sequence that is dense in~$A$.
\end{definition}
If~$A$ is computably separable then the set of cylinders intersecting~$A$ is c.e., and there is a computable function mapping any cylinder intersecting~$A$ to an element of~$A$ in that cylinder. This property has the following interesting consequence.

\begin{lemma}\label{lemma_extension}
Assume that~$\dom(\delta_Y)$ is computably separable. Every computable function from a~$\Pi^0_1$-subset of~$\Baire$ to~$\Y$ has a total computable extension.
\end{lemma}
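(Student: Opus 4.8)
The plan is to convert any realizer of $f$ into a \emph{total} computable realizer of an extension, using the computable separability of $\dom(\delta_Y)$ as a supply of safe default names. Let $P$ be the given $\Pi^0_1$-subset, so that $\Baire\setminus P=\bigcup_n[u_n]$ for a computable sequence of finite words $(u_n)$, and let $\Psi:\subseteq\Baire\to\Baire$ be a partial computable function realizing $f$ on $P$: for every $p\in P$ the computation $\Psi(p)$ produces an infinite name $\Psi(p)\in\dom(\delta_Y)$ with $\delta_Y(\Psi(p))=f(p)$, while for $p\notin P$ the behaviour of $\Psi(p)$ is arbitrary (it may diverge or leave $\dom(\delta_Y)$). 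Computable separability of $\dom(\delta_Y)$ supplies, via the dense computable sequence, a c.e.\ predicate telling whether a cylinder $[v]$ meets $\dom(\delta_Y)$, together with a computable map $g$ sending any such $[v]$ to an element $g([v])\in\dom(\delta_Y)\cap[v]$; this $g$ is the device completing a partial word into a genuine name.

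First I would describe the total realizer $\tilde F$. On input $p$ it maintains a committed output prefix $v$, subject to the invariant that $[v]$ always meets $\dom(\delta_Y)$ (initially $v$ is empty, and $\dom(\delta_Y)\neq\emptyset$). At each stage it dovetails two searches: it runs $\Psi(p)$ for more steps, and it looks for some $n$ with $u_n\prec p$. As long as no such $n$ has appeared, it appends to $v$ the longest further prefix of the current finite output of $\Psi(p)$ for which the extended word still meets $\dom(\delta_Y)$, checked with the c.e.\ predicate. As soon as a witness $u_n\prec p$ is found, the construction freezes: from then on it outputs the bits of $g([v])$, which extends the current $v$ and lies in $\dom(\delta_Y)$.

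Then I would verify the three required properties. Totality and membership $\tilde F(p)\in\dom(\delta_Y)$ split into two cases. If $p\in P$ then no witness is ever found and $\Psi(p)$ is an infinite name in $\dom(\delta_Y)$; every one of its prefixes meets $\dom(\delta_Y)$ since it contains $\Psi(p)$ itself, so the appending step never blocks and $\tilde F(p)=\Psi(p)$, whence $\delta_Y(\tilde F(p))=f(p)$ and the induced map $\tilde f=\delta_Y\circ\tilde F$ extends $f$. If $p\notin P$ then a witness is eventually found, the construction freezes and outputs the infinite name $g([v])\in\dom(\delta_Y)$, so $\tilde F(p)$ is total and name-valued, its $\delta_Y$-value being some point of $\Y$ that we need not control. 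All steps are uniformly computable, so $\tilde F$ is a total computable realizer and $\tilde f$ is the desired total computable extension.

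The main obstacle, and the only place computable separability is used, is maintaining the invariant that the committed prefix $v$ always meets $\dom(\delta_Y)$: without it the fallback $g([v])$ could be undefined at the very moment we discover $p\notin P$. This is exactly why we copy from $\Psi(p)$ only those bits that provably keep $[v]$ meeting $\dom(\delta_Y)$, rather than blindly trusting the realizer off $P$. The other delicate point is totality when $\Psi$ diverges off $P$, which is resolved by the fact that membership in the complement of a $\Pi^0_1$-set is semidecidable, so any $p\notin P$ is eventually detected and the construction switches to the always-defined default branch.
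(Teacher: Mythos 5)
Your proof is correct and follows essentially the same approach as the paper: dovetail the given realizer with the semidecidable test $p\notin P$, commit only output prefixes verified (via the c.e.\ predicate from computable separability) to be compatible with $\dom(\delta_Y)$, and upon discovering $p\notin P$ complete the committed prefix to a canonical element of $\dom(\delta_Y)$. Yours simply spells out the details (the invariant on the committed prefix and the role of the completion map $g$) that the paper's terser argument leaves implicit.
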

\begin{proof}
Let~$P$ be a~$\Pi^0_1$-subset of~$\Baire$,~$\phi:P\to \Y$ be computable and~$\Phi:P\to\Baire$ be a computable realizer for~$\phi$. We define a total computable extension~$\phi'$ of~$\phi$ be defining a computable realizer of~$\phi'$ as follows. Given~$f$, run two algorithms in parallel. On the one hand, start computing~$\Phi(f)$ and output the result as long as it is compatible with (i.e.\ has an extension in)~$\dom(\delta_Y)$. On the other hand, test whether~$f\notin P$. If one eventually discovers that~$f\notin P$, then extend the current output with some canonical element of~$\dom(\delta_Y)$.
\end{proof}


We now get the effective version of Proposition \ref{prop_continuous}.
\begin{proposition}\label{prop_pi01}
Assume that~$\dom(\delta_Y)$ is computably separable and~$\Y$ admits a computable fixed-point free multifunction. The computable functions from~$\Baire$ to~$\Y$ are not~$\Sigma^1_1$-enumerable.
\end{proposition}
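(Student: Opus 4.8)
The plan is to derive a contradiction with Proposition \ref{prop_computable}, by turning a hypothetical $\Sigma^1_1$-enumeration of $\C(\Baire,\Y)_c$ into a single total computable map $\Baire\to\C(\Baire,\Y)$ whose range covers all of $\C(\Baire,\Y)_c$. So suppose $\C(\Baire,\Y)_c$ is $\Sigma^1_1$-enumerable: there is $S\in\Sigma^1_1(\N)$ and a computable surjection $e\colon S\to\C(\Baire,\Y)_c$.

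First I would get rid of the $\Sigma^1_1$ index set in favour of a $\Pi^0_1$ domain, so that the extension Lemma \ref{lemma_extension} becomes applicable. Writing $n\in S\iff\exists y\in\Baire\,\forall m\,R(n,y,m)$ for a computable relation $R$, the set $P:=\{\langle n,y\rangle:\forall m\,R(n,y,m)\}\subseteq\Baire$ is $\Pi^0_1$, and the projection $\pi\colon\langle n,y\rangle\mapsto n$ is a computable surjection of $P$ onto $S$ (equivalently, since $\KK{1}\cong\Baire$, this is Theorem \ref{thm_kk_sigma} for $k=1$). Then $e\circ\pi\colon P\to\C(\Baire,\Y)$ is computable, defined on a $\Pi^0_1$-set, and its range is all of $\C(\Baire,\Y)_c$.

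The crucial move is to transpose this map so that its target becomes $\Y$, which is where the hypothesis on $\dom(\delta_Y)$ lives. By the exponential law $\C(P\times\Baire,\Y)\cong\C(P,\C(\Baire,\Y))$, the map $e\circ\pi$ corresponds to a computable $g\colon P\times\Baire\to\Y$, and $P\times\Baire$ is (the pairing of) a $\Pi^0_1$-subset of $\Baire$. Since $\dom(\delta_Y)$ is computably separable, Lemma \ref{lemma_extension} yields a total computable extension $g'\colon\Baire\times\Baire\to\Y$. Currying back produces a total computable $\psi\colon\Baire\to\C(\Baire,\Y)$ with $\psi(f)(f')=g'(f,f')$; because $g'$ extends $g$, we have $\psi(p)=(e\circ\pi)(p)$ for every $p\in P$, so the range of $\psi$ contains $\C(\Baire,\Y)_c$. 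As $\Y$ admits a computable fixed-point free multifunction, this contradicts Proposition \ref{prop_computable}.

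I expect the transposition step to be the only delicate point: one must check that uncurrying keeps the domain exactly $\Pi^0_1$ and moves the target to $\Y$ (so that both hypotheses of Lemma \ref{lemma_extension} are met), and that re-currying the total extension preserves computability while still hitting every previously enumerated function. The reduction of $S$ to a $\Pi^0_1$-set is not optional, since the extension lemma relies on semi-deciding the complement of the domain, which is available for $\Pi^0_1$ but not for $\Sigma^1_1$ sets.
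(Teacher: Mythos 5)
Your proposal is correct and follows essentially the same route as the paper's own proof: reduce the $\Sigma^1_1$ index set to a $\Pi^0_1$-subset of $\Baire$, uncurry the enumeration to a map $P\times\Baire\to\Y$ so that Lemma \ref{lemma_extension} applies, extend to a total computable map, and invoke Proposition \ref{prop_computable}. The only difference is presentational: you spell out the $\Sigma^1_1$ normal form and the currying isomorphism explicitly, where the paper cites these facts in passing.
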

\begin{proof}
Every~$\Sigma^1_1$-subset of~$\N$ is a computable image of a~$\Pi^0_1$-subset of~$\Baire$, so it is sufficient to prove that the computable elements of~$\C(\Baire,\Y)$ is not a computable image of such a set.

Let~$P$ be a~$\Pi^0_1$-subset of~$\Baire$ and~$\delta:P\to\C(\Baire,\Y)$ be computable. The computable separability of the domain of~$\delta_Y$ enables one to extend~$\delta$ outside~$P$. Indeed,~$\delta$ is equivalently seen as a computable function from~$P\times \Baire$ to~$\Y$, and~$P\times\Baire$ is a~$\Pi^0_1$-set, so Lemma \ref{lemma_extension} can be applied. We now apply Proposition \ref{prop_computable}.
\end{proof}

\begin{example}
There is a strong contrast between the two following cases:
\begin{itemize}
\item The left-c.e.\ functions from~$\Baire$ to~$[0,1]$ can be effectively enumerated (from~$\N$). Indeed, if~$q_n$ is a computable enumeration of the rational numbers in~$[0,1]$ then from any computable sequence~$(U_n)_{n\in\N}$ of open subsets of~$\Baire$ (those sequences can be effectively enumerated), one can build the left-c.e.\ function~$F(f)=\sup\{q_n:f\in U_n\}$ (with~$\sup\emptyset=0$).

\item However the left-c.e.\ functions from~$\Baire$ to~$[0,1)$ are not~$\Sigma^1_1$-enumerable. Indeed, the computable function~$h:[0,1)_<\to[0,1)_<$ mapping~$x$ to~$(1+x)/2>x$ is fixed-point free, and the representation of~$[0,1)_<$ has a computably separable domain (a real number~$x\in[0,1)_<$ is represented by a non-decreasing sequence of rationals converging to~$x$).
\end{itemize}
\end{example}




\paragraph{Oracles.}
So far we have measured the complexity of parametrizing points by measuring the descriptive complexity of the domain of a computable enumeration. A different but related approach is to measure the complexity of an oracle computing an enumeration.

\begin{corollary}
Assume that~$\dom(\delta_Y)$ is computably separable and~$\Y$ admits a computable fixed-point free multifunction. There is no~$\Delta^1_1$-oracle enumerating a list of indices of all the computable points of~$\C(\Baire,\Y)$.
\end{corollary}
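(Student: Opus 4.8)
The plan is to reduce this statement to Proposition \ref{prop_pi01} by showing that a $\Delta^1_1$-oracle enumerating such a list would yield a $\Sigma^1_1$-enumeration of the computable points of $\C(\Baire,\Y)$, which that proposition forbids. First I would fix the meaning of the hypothesis: an \emph{index} of a computable point $x$ is a number $e$ such that the $e$-th partial computable function outputs a name of $x$, and a $\Delta^1_1$-oracle $A$ \emph{enumerating a list} means that the set $L\subseteq\N$ of indices it produces is computably enumerable relative to $A$, that every $e\in L$ is a genuine index of a computable point, and that $\{x:x\text{ has an index in }L\}$ is exactly the set of computable points of $\C(\Baire,\Y)$.

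The key observation is that $L$ is then a $\Sigma^1_1$-subset of $\N$. Indeed, $L$ being c.e.\ in $A$ means $n\in L\iff\exists s\,\exists\sigma\,(\sigma\prec A\wedge\Psi^\sigma(s)=n)$ for some oracle machine $\Psi$, where the finite use $\sigma$ ranges over (codes of) finite strings. Since $A\in\Delta^1_1$, the predicate ``$\sigma$ is an initial segment of $A$'' is a bounded conjunction of $\Delta^1_1$ conditions, hence $\Delta^1_1$; projecting it along the number quantifiers $\exists s,\exists\sigma$ and conjoining with the computable predicate $\Psi^\sigma(s)=n$ keeps us inside $\Sigma^1_1$. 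So $L\in\Sigma^1_1(\N)$.

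Next I would build the computable surjection witnessing $\Sigma^1_1$-enumerability. Evaluation of the universal machine gives a computable map $g$ sending each $e\in L$ to the point it names: from $e$ (a name of itself as an element of $\N$) one runs program $e$ to produce a name of $g(e)\in\C(\Baire,\Y)$. By the meaning of ``list of indices of all computable points'', $g$ is well defined on $L$ and its range is exactly the computable points of $\C(\Baire,\Y)$. Thus these computable points form a computable image of the $\Sigma^1_1$-set $L$, i.e.\ they are $\Sigma^1_1$-enumerable. This contradicts Proposition \ref{prop_pi01}, which applies because $\dom(\delta_Y)$ is computably separable and $\Y$ admits a computable fixed-point free multifunction.

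I expect the only delicate point to be the complexity bookkeeping in the second step: verifying that relativizing an enumeration to a $\Delta^1_1$ oracle does not escape $\Sigma^1_1$. This rests on the finite-use principle together with the closure of $\Delta^1_1$ predicates under bounded quantification and Boolean combinations. Everything else is a direct unwinding of the definitions of \emph{index} and of $\Sigma^1_1$-enumerability.
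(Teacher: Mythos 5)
Your proposal is correct and follows essentially the same route as the paper: extract the set $L$ of enumerated indices, observe it is lightface definable (the paper notes it is in fact $\Delta^1_1$, since a set c.e.\ in a $\Delta^1_1$ oracle remains $\Delta^1_1$; your weaker $\Sigma^1_1$ bound suffices), and note that the index-to-point evaluation map is a computable surjection onto the computable points, contradicting Proposition \ref{prop_pi01}.
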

\begin{proof}
If there is such an oracle then the set of enumerated indices is also a~$\Delta^1_1$-subset of~$\N$, and the function mapping an index in that set to the corresponding point is a computable onto function with a~$\Delta^1_1$ domain, contradicting Proposition \ref{prop_pi01}.
\end{proof}

However it may happen that a~$\Delta_1^1$-oracle enumerates all the computable points, producing names rather than indices. Again this is not possible under the same assumptions.

\begin{proposition}\label{prop_oracle}
Assume that~$\dom(\delta_Y)$ is computably separable and~$\Y$ admits a computable fixed-point free multifunction. There is no~$\Delta^1_1$-oracle enumerating all the computable points of~$\C(\Baire,Y)$.
\end{proposition}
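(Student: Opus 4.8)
The plan is to mimic the reduction used for the Corollary above, but to accommodate the fact that the oracle now produces \emph{names} rather than \emph{indices}. The difficulty, compared with the index case, is that an enumerated name need not be computable: unlike an enumerated index, which the Corollary could gather into a $\Delta^1_1$-subset of~$\N$ on which the decoding map is computable, a name is a non-computable element of~$\Baire$, so the enumeration cannot be read off directly as a computable map out of a subset of~$\N$. The key idea is to absorb the non-computability of the names into the \emph{domain} of the parametrization, by treating the single $\Delta^1_1$-oracle as an input point living in a $\Sigma^1_1$-set.

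Concretely, suppose for contradiction that a $\Delta^1_1$-oracle~$O\in\Baire$ enumerates names~$p_0,p_1,\ldots\in\Baire$ of all the computable points of~$\C(\Baire,\Y)$, say via a Turing functional~$\Theta$ with~$\Theta^O(i)=p_i$. First I would observe that the map
\[
g:\{O\}\times\N\to\C(\Baire,\Y),\qquad g(O,i)=\delta_{\C(\Baire,\Y)}(\Theta^O(i)),
\]
is \emph{computable without any oracle}: once~$O$ is supplied as the first coordinate of the input, computing~$p_i=\Theta^O(i)$ and interpreting it as a name of a point of~$\C(\Baire,\Y)$ is an oracle-free computation. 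Moreover~$g$ is onto all the computable points, since the~$p_i$ name all of them; note that the points of the domain may well be non-computable, as only the map, not the domain, needs to be computable. Next I would bound the complexity of the domain: since~$O$ is~$\Delta^1_1$, the singleton~$\{O\}$ is~$\Pi^0_1(O)$, hence~$\Delta^1_1$, hence in particular a~$\Sigma^1_1$-subset of~$\Baire$, and after pairing~$D:=\{O\}\times\N$ is a~$\Sigma^1_1$-subset of~$\Baire$.

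From here I would follow the proof of Proposition \ref{prop_pi01}. As there, every~$\Sigma^1_1$-subset of~$\Baire$ is a computable image of a~$\Pi^0_1$-subset of~$\Baire$, so there are a~$\Pi^0_1$-set~$C\subseteq\Baire$ and a computable surjection~$\pi:C\to D$. Then~$g\circ\pi:C\to\C(\Baire,\Y)$ is computable, has a~$\Pi^0_1$-domain, and is onto all the computable points. Viewing it as a computable function on the~$\Pi^0_1$-set~$C\times\Baire$ into~$\Y$ and using the computable separability of~$\dom(\delta_Y)$, Lemma \ref{lemma_extension} provides a total computable extension~$\Baire\times\Baire\to\Y$, i.e.\ a total computable function~$\Baire\to\C(\Baire,\Y)$ whose range still contains every computable point, contradicting Proposition \ref{prop_computable}.

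The crux, and the only genuinely new point over the Corollary, is the first step: folding the lone $\Delta^1_1$ object~$O$ into the input via the~$\Sigma^1_1$-domain~$\{O\}$, which is exactly what converts the oracle computation into an oracle-free one while keeping the domain within~$\Sigma^1_1$. I expect the main thing to verify carefully to be precisely this legitimacy of the coding, together with the (standard) fact that a~$\Sigma^1_1$-subset of~$\Baire$ is a computable image of a~$\Pi^0_1$-set; the rest is the machinery of Proposition \ref{prop_pi01}. It is worth emphasizing that this diagonal route never tests equality of functions in~$\Y$, which would be the natural but problematic move for a general codomain~$\Y$.
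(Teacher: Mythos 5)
Your proof is correct and takes essentially the same route as the paper's: the paper likewise folds the $\Delta^1_1$-oracle~$A$ into the input by observing that~$\{A\}$ is a~$\Sigma^1_1$-subset of~$\Baire$, hence a computable image of a~$\Pi^0_1$-set~$P$, defines the resulting oracle-free computable map on~$P\times\N$, and then extends it via Lemma~\ref{lemma_extension} to contradict Proposition~\ref{prop_computable}. The only cosmetic difference is that the paper defines the map directly on~$P\times\N$ instead of first introducing~$g$ on~$\{O\}\times\N$ and composing with the surjection.
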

\begin{proof}
Let~$A\in\Delta^1_1(\N)$ and~$\phi:\N\to\C(\Baire,\Y)$ be computable with oracle~$A$. The set~$\{A\}$ is a~$\Delta^1_1$-subset (hence a~$\Sigma^1_1$-subset) of~$\Baire$ so it is a computable image of a~$\Pi^0_1$-subset~$P$ of~$\Baire$. As a result, the function~$\phi':P\times\N\to\C(\Baire,\Y)$ mapping~$(f,n)$ to~$\phi(n)$ is a computable function defined on a~$\Pi^0_1$-subset of~$\Baire\times\N\cong\Baire$. As in the proof of Propostion \ref{prop_pi01},~$\phi'$ has a total computable extension, which cannot contain all the computable elements in its range.
%
%
\end{proof}
\section{Applications to open sets}\label{sec_applications}
Let~$\X$ be a represented space.~$\Y:=\O(\X)$ is a represented space with a computably separable representation (every cylinder contains a representation of~$X\in\O(\X)$ consisting of a covering of~$\Baire$, and that can be computed from the cylinder). In \cite{deBSS16}, a represented space~$\X$ is called~$\Baire$-based if there is a continuous function from~$\Baire$ to~$\O(\X)$ whose image is a basis of the topology on~$\X$. This is equivalent to saying that~$\O(\X)$ itself is a continuous image of~$\Baire$.

\begin{lemma}\label{lem_baire_based}
A represented space~$\X$ is~$\Baire$-based iff~$\O(\X)$ is a continuous image of~$\Baire$.
\end{lemma}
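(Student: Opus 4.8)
The plan is to prove both implications, with the reverse one essentially immediate and the forward one resting on a single topological observation. For the direction showing that a continuous image of~$\Baire$ makes~$\X$ be~$\Baire$-based, I would simply note that if~$\phi:\Baire\to\O(\X)$ is a continuous surjection, then its image is all of~$\O(\X)$, and the collection of \emph{all} open sets is trivially a basis of the topology of~$\X$; thus~$\phi$ itself witnesses that~$\X$ is~$\Baire$-based, with nothing further to check.

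For the converse, suppose~$b:\Baire\to\O(\X)$ is continuous with image~$\B$ a basis of~$\X$. I would define~$\psi:\Baire\to\O(\X)$ by decoding the input~$g$ into a sequence~$(g_n)_{n\in\N}$ of elements of~$\Baire$ and setting~$\psi(g)=\bigcup_n b(g_n)$. Continuity of~$\psi$ is routine: the map~$g\mapsto(b(g_n))_n$ is continuous because~$b$ is, and countable union is a computable (hence continuous) operation on~$\O(\X)$. So the only remaining task is surjectivity, i.e.\ that every open set~$U$ is a countable union of members of~$\B$.

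I expect surjectivity to be the main obstacle, and in particular I expect the naive attempt---perturbing a name~$f$ with~$b(f)\subseteq U$ to a nearby name drawn from a countable dense subset of~$\Baire$---to fail, since the condition~$b(f)\subseteq U$ is not open in~$f$. The observation that resolves it is a Lindel\"of argument. The set~$\delta_X^{-1}(U)$ is open in~$\dom(\delta_X)\subseteq\Baire$, hence second countable and therefore Lindel\"of, and~$\delta_X$ restricts to a continuous surjection from~$\delta_X^{-1}(U)$ onto~$U$ equipped with its subspace topology. Since Lindel\"ofness is preserved under continuous surjections,~$U$ is Lindel\"of. Because~$\B$ is a basis, the family~$\{b(f):b(f)\subseteq U\}$ is an open cover of~$U$, so by Lindel\"ofness it admits a countable subcover~$\{b(f_n)\}_{n\in\N}$. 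Each~$b(f_n)\subseteq U$ and together they cover~$U$, so~$U=\bigcup_n b(f_n)$ as subsets of~$X$, a genuine equality of the~$\tau_X$-open set~$U$ with a countable union of~$\tau_X$-open pieces. Choosing~$g$ to encode~$(f_n)_n$ then yields~$\psi(g)=U$, establishing surjectivity and completing the forward implication.

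Finally I would remark that the construction is uniform and effective: if~$b$ is computable then so is~$\psi$, which is the form of the statement that feeds into the computable-retract and enumeration results elsewhere in the paper.
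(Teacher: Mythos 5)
Your proof is correct and takes essentially the same route as the paper: the paper likewise defines $\Psi(f)=\bigcup_i\Phi(f_i)$ on a decoded sequence and obtains surjectivity from the fact that~$\X$ is hereditarily Lindel\"of, which it justifies exactly as you do, by pulling an open cover back through~$\delta_X$ to an open (hence second-countable, hence Lindel\"of) subset of~$\dom(\delta_X)\subseteq\Baire$ and pushing a countable subcover forward. The only detail the paper handles that you omit is the empty open set: since $\bigcup_n b(g_n)$ ranges over a nonempty index set, $\emptyset$ is in the range of~$\psi$ only if $\emptyset$ is in the image of~$b$ (your subcover is empty when $U=\emptyset$, leaving no sequence $(f_n)$ to encode), so the paper first modifies the given map so that $\emptyset$ lies in its image --- a one-line patch you should add.
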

\begin{proof}
The space~$\X$ is hereditarily Lindel\"of, i.e.\ every open cover contains a countable subcover (simply because~$\Baire$ and its subspaces satisfy this property and taking the final topology of the representation preserves this property). Now assume that~$\X$ is~$\Baire$-based and let~$\Phi:\Baire\to\O(\X)$ be continuous, such that~$\Phi[\Baire]$ is a basis. We can easily modify~$\Phi$ so that the empty set is in its image. A function~$f:\N\to\N$ can equivalently be seen as a sequence of functions~$f_i:\N\to\N$. Let~$\Psi(f)=\bigcup_i\Phi(f_i)$. The function~$\Psi:\Baire\to\O(\X)$ is continuous and onto, as every open subset of~$\O(\X)$ is a countable union of elements of the basis.
\end{proof}

Observe that~$\C(\Baire,\O(\X))\cong\O(\Baire\times \X)$, hence Propositions \ref{prop_continuous}, \ref{prop_pi01}, and \ref{prop_oracle} immediately give the following results.

\begin{theorem}\label{thm_OX}
\begin{enumerate}
\item If~$\O(\X)$ admits a continuous fixed-point free multifunction then~$\Baire\times \X$ is not~$\Baire$-based.
\item If~$\O(\X)$ admits a computable fixed-point free multifunction then the computable open subsets of~$\Baire\times \X$:
\begin{itemize}
\item Are not~$\Sigma^1_1$-enumerable,
\item Are not enumerable relative to any~$\Delta_1^1$ oracle.
\end{itemize}
\end{enumerate}
\end{theorem}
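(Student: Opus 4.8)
The statement is an immediate corollary of the machinery developed in Section \ref{sec_param}, specialized to the case~$\Y=\O(\X)$. The plan is to exploit the computable isomorphism~$\C(\Baire,\O(\X))\cong\O(\Baire\times\X)$ recorded just above the theorem, which lets me translate the abstract results about~$\C(\Baire,\Y)$ into statements about open subsets of~$\Baire\times\X$.

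For part (1), I would set~$\Y:=\O(\X)$ and invoke Proposition \ref{prop_continuous}: since~$\O(\X)$ admits a continuous fixed-point free multifunction by hypothesis,~$\C(\Baire,\O(\X))$ is not a continuous image of~$\Baire$. Transporting this along the isomorphism,~$\O(\Baire\times\X)$ is not a continuous image of~$\Baire$. Then Lemma \ref{lem_baire_based}, applied to the space~$\Baire\times\X$, gives exactly that~$\Baire\times\X$ is not~$\Baire$-based.

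For part (2), the crucial observation is that the computable separability hypothesis required by Propositions \ref{prop_pi01} and \ref{prop_oracle} is automatic here: as noted at the start of this section, the representation of~$\O(\X)$ always has a computably separable domain. Hence, taking~$\Y=\O(\X)$, both propositions apply verbatim and state that the computable elements of~$\C(\Baire,\O(\X))$ are neither~$\Sigma^1_1$-enumerable nor enumerable relative to any~$\Delta^1_1$-oracle. Under the isomorphism these computable elements correspond precisely to the computable open subsets of~$\Baire\times\X$, which yields the two bullet points.

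The proof has essentially no difficult step; the only point requiring a moment's care is verifying that the transfer along~$\C(\Baire,\O(\X))\cong\O(\Baire\times\X)$ genuinely respects the computable-enumeration notions. This holds because the isomorphism is computable in both directions, so any~$\Sigma^1_1$-enumeration (resp.\ any enumeration relative to a~$\Delta^1_1$-oracle) of one side composes with the isomorphism to give one of the other side; consequently non-enumerability is equivalent on the two sides rather than merely analogous, and the conclusions of Propositions \ref{prop_pi01} and \ref{prop_oracle} carry over unchanged.
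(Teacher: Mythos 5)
Your proposal is correct and follows exactly the paper's own route: the paper derives the theorem immediately from the isomorphism~$\C(\Baire,\O(\X))\cong\O(\Baire\times\X)$, Propositions \ref{prop_continuous}, \ref{prop_pi01} and \ref{prop_oracle} with~$\Y=\O(\X)$, the computable separability of~$\dom(\delta_{\O(\X)})$ noted at the start of the section, and Lemma \ref{lem_baire_based}. Your added care about transporting enumerability along the computable isomorphism is exactly the (implicit) justification the paper relies on.
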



Observe that by the Kleene fixed-point theorem, every continuous function from~$\O(\X)$ to~$\O(\X)$ has a fixed-point, so we really need to consider multifunctions rather than functions.

Also note that if~$\X$ is a countably-based space with its standard representation then~$\O(\X)$ has no continuous fixed-point free multifunction, because~$\Baire\times\X$ is countably-based hence~$\O(\Baire\times\X)$ is a continuous imageof~$\Baire$.

\subsection{Product space}\label{sec_product}
Let us consider again the admissibly represented space~$\X=\Baire\times \O(\Baire)$. Its topology is not the product topology and contrary to the product topology it is not~$\Baire$-based as shown below.

\begin{lemma}\label{lemma_OOBaire}
The space~$\O(\O(\Baire))$ admits a computable fixed-point free multifunction.
\end{lemma}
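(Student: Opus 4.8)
The plan is to construct directly a computable multifunction $h:\O(\O(\Baire))\tto\O(\O(\Baire))$ with $\mathcal U\notin h(\mathcal U)$ for every $\mathcal U$, exploiting that $\Baire$ is not locally compact. Recall that a name of $\mathcal U\in\O(\O(\Baire))=\C(\O(\Baire),\sierp)$ lets one semidecide the predicate ``$O\in\mathcal U$'' from a name of any $O\in\O(\Baire)$, and that for each $f\in\Baire$ the point-evaluation $\widehat f:=\{O\in\O(\Baire):f\in O\}$ is a computable element of $\O(\O(\Baire))$, produced from $f$ by the computability of $\eval$. These point-evaluations, whose generators are exactly the cylinders $[f\res n]$, will be the outputs of $h$.

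The geometric input is that no cylinder has compact complement, so for every $\mathcal U\neq\emptyset$ there is a basic open set of the form $\Baire\setminus[w]$ lying in $\mathcal U$: indeed $\mathcal U\neq\emptyset$ forces $\Baire\in\mathcal U$, hence by Scott-openness some $V=[0]\cup\dots\cup[n]$ is already in $\mathcal U$, and since $V\subseteq\Baire\setminus[n+1]$ and $\mathcal U$ is upward closed we get $\Baire\setminus[n+1]\in\mathcal U$. On input $\mathcal U$ the realizer dovetails, feeding the names of the open sets $\Baire\setminus[w]$ to the given name of $\mathcal U$, and searches for a cylinder $[w]$ with $\Baire\setminus[w]\in\mathcal U$. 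Upon finding one it commits the output point to some $f^*\in[w]$ and emits the name of $\widehat{f^*}$: this wins, because $\Baire\setminus[w]\in\mathcal U$ whereas $\Baire\setminus[w]\notin\widehat{f^*}$ (as $f^*\in[w]$), so the output set $\widehat{f^*}$ differs from $\mathcal U$. If the search never succeeds, then by the previous remark $\mathcal U=\emptyset$, and any nonempty default $\widehat{f^*}$ already differs from $\mathcal U$. Thus at the level of \emph{sets} the dichotomy is clean, and it automatically disposes of the would-be self-referential case $\mathcal U=\widehat g$: since $\widehat g\neq\emptyset$ a witness $\Baire\setminus[w]$ with $g\notin[w]$ exists, forcing a redirection of $f^*$ into $[w]$ away from $g$.

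The main obstacle is operational: reconciling the nonempty default (needed when $\mathcal U=\emptyset$) with the redirection (needed when $\mathcal U\neq\emptyset$), i.e.\ the timing of the commitments. To name $\widehat{f^*}$ the realizer must emit longer and longer prefixes of $f^*$, and to guarantee totality in the empty case it cannot wait forever before committing the next symbol; but an unlucky schedule may commit $f^*$ out of the very cylinder $[w]$ whose witness $\Baire\setminus[w]$ is about to be confirmed, after which that witness is absorbed and no longer separates the output from $\mathcal U$. This is precisely where single-valuedness must fail — by Kleene's fixed-point theorem every continuous self-map of $\O(\O(\Baire))$ has a fixed point — and where the multivaluedness is essential: the output is permitted to depend on the \emph{name} of $\mathcal U$, hence on the order and speed at which membership facts surface.

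The resolution I would carry out is to emit $f^*$ lazily along a provisional path while only ever redirecting into a cylinder $[w]$ that still extends the currently committed prefix, and to organise the dovetailing so that each successive symbol of $f^*$ is frozen only after every witness lying below the current cylinder has been granted unboundedly much verification time. For a current prefix $v$ one shows that either some $\Baire\setminus[w]$ with $[w]\subseteq[v]$ belongs to $\mathcal U$ — in which case the consistent redirection fires and wins — or else $\mathcal U\subseteq\up{[v]}$, in which case the limiting default $\widehat{f^*}$ with $f^*\in[v]$ still differs from $\mathcal U$ because $\widehat{f^*}\not\subseteq\up{[v]}$ (e.g.\ $\Baire\setminus\{h\}\in\widehat{f^*}\setminus\up{[v]}$ for $h\in[v]$, $h\neq f^*$). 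Checking that this staged search indeed meets a separating witness in time against every input, most delicately against inputs that mimic the intended output, is the crux of the proof and is exactly the work done by the multi-valued fixed-point-free technique developed in this section.
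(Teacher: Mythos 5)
You have the right skeleton --- diagonalize by outputting point evaluations $\widehat{f^*}=\{O\in\O(\Baire):f^*\in O\}$, use upward closure and $\omega$-Scott-openness of $\U$ to get complements of cylinders into $\U$, and keep a nonempty default for the case $\U=\emptyset$ --- and your dichotomy (for every prefix $v$, either $\Baire\setminus[w]\in\U$ for some $w$ extending $v$, or $\U\subseteq\up{[v]}$) is correct. But the proof is incomplete exactly at what you yourself call the crux, and the resolution you sketch cannot be executed. You cannot freeze a symbol of $f^*$ ``only after every witness lying below the current cylinder has been granted unboundedly much verification time'': there are infinitely many such witnesses, and each symbol must be frozen after finitely many steps (otherwise, when $\U=\emptyset$ or $\U\subseteq\up{[v]}$, no confirmation ever arrives, $f^*$ is never fully specified, and your output is not a valid name of anything), so each freeze can grant only finitely much time. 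Once your commitment schedule is forced to be total, the input can defeat you: a name of $\U$ is only obliged to accept a fed name of a member of $\U$ at \emph{some} finite stage, so it may delay each positive answer arbitrarily, and (by a recursion-theorem style self-reference, simulating your algorithm run against itself) it can time each answer about $\Baire\setminus[w]$ to arrive only after your committed prefix has become incompatible with $w$. Against the resulting self-referential input $\U=\widehat{g}$, where $g$ is the limit of your path in that run, every confirmation you receive concerns a cylinder you have already left, the ``consistent redirection'' never fires, and you output $\widehat{g}=\U$. So the difficulty is not delicate bookkeeping: the win condition ``redirect into a confirmed witness cylinder still compatible with the commitment'' is one that fixed, pre-chosen queries cannot force.

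The paper's proof circumvents this with two ingredients your proposal lacks. First, it never waits on a confirmation that is not guaranteed to come: it first certifies $\U\neq\emptyset$ (equivalently $\Baire\in\U$, by upward closure) by feeding an arbitrary name of $\Baire$, and only \emph{then} freezes the committed prefix $0^n$; the freeze is harmless because the next query is again a name of $\Baire$, which is now guaranteed to be accepted in finite time. Second, and crucially, the separating witness is not chosen in advance but constructed \emph{after} acceptance: the second name of $\Baire$ is enumerated so slowly that $[0^n]$ is never covered at any finite stage (possible because $[0^n]$ is not compact); acceptance therefore occurs on a finite prefix, i.e.\ on a finite union of cylinders $P$ with $[0^n]\nsubseteq P$; and since accepting a prefix commits the input to accepting every extension of that prefix, one may pick $g\in[0^n]\setminus P$ and continue the enumeration to a name of $\Baire\setminus\{g\}$, which is thereby guaranteed to lie in $\U$ while obviously not lying in $\widehat{g}$. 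This post-acceptance steering of a single guaranteed-to-be-accepted query is what neutralizes arbitrary delays, and it is the idea missing from your argument.
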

\begin{proof}
Given a name of~$\U\in\O(\O(\Baire))$, we build a function~$g:\N\to\N$ and define~$\V=\{O\in\O(\Baire):g\in O\}$. The function~$g$ is defined as follows.
Start describing the function~$g$ mapping everything to~$0$. If eventually one finds that~$\U\neq\emptyset$ then do as follows. So far,~$g$ belongs to some cylinder~$[0^n]$. As~$\U\neq\emptyset$, ~$\Baire\in\U$ so any enumeration of~$\Baire$ is accepted by the name of~$\U$. Enumerate~$\Baire$ in such a way that~$[0^n]$ is not covered in finite time. When this enumeration is accepted by~$\U$, let~$g$ be an element of~$[0^n]$ that is not yet covered by the enumeration of~$\Baire$.

If~$\U=\emptyset$ then~$g=0$ and~$\V$ is non-empty so~$\V\neq \U$. If~$\U\neq \emptyset$ then~$\Baire\setminus\{g\}$ belongs to~$\U$ but not to~$\V$, so~$\V\neq\U$.
\end{proof}

One can apply Theorem \ref{thm_OX}.
\begin{itemize}
\item $\Baire\times\O(\Baire)$ is not~$\Baire$-based,
\item The computable open subsets of~$\Baire\times \O(\Baire)$ are neither~$\Sigma^1_1$-enumerable nor enumerable relative to a~$\Delta^1_1$ oracle. 
\end{itemize}

However,
\begin{proposition}
$\Baire\times\O(\Baire)$ is~$\NN$-based.
\end{proposition}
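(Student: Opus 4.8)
The plan is to prove the stronger statement that $\O(\Baire\times\O(\Baire))$ is a continuous image of $\NN$; by the easy half of the $\NN$-analogue of Lemma~\ref{lem_baire_based} this suffices, since a continuous surjection $\NN\to\O(\Baire\times\O(\Baire))$ has image all of $\O(\Baire\times\O(\Baire))$, and the collection of all open sets is trivially a basis. Before starting I would record that $\NN$ is a robust parameter space: from $\Baire\times\N\cong\Baire\cong\Baire\times\Baire$ and the exponential law one gets $\C(\Baire,\Baire)=\C(\Baire,\C(\N,\N))\cong\C(\Baire\times\N,\N)\cong\C(\Baire,\N)=\NN$, and likewise $\C(\Baire,\NN)\cong\NN$ and $\NN\times\NN\cong\NN$. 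Thus anything assembled continuously out of countably many $\Baire$- and $\NN$-indexed pieces, or obtained as $\C(\Baire,-)$ of an $\NN$-indexed family, stays at level $2$.

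The first real step is the exponential transpose $\O(\Baire\times\O(\Baire))\cong\C(\O(\Baire),\C(\Baire,\sierp))=\C(\O(\Baire),\O(\Baire))$, so it is enough to exhibit $\C(\O(\Baire),\O(\Baire))$ as a continuous image of $\NN$. Now $\O(\Baire)$ is a computable image of $\Baire$ via its (total) representation $q\colon\Baire\to\O(\Baire)$, so a continuous $\lambda\colon\O(\Baire)\to\O(\Baire)$ is exactly what is realized by an extensional continuous $G\colon\Baire\to\Baire$, namely a $G$ with $q\circ G$ factoring through $q$: $q(p)=q(p')\Rightarrow q(G(p))=q(G(p'))$. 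Such realizers live in $\C(\Baire,\Baire)\cong\NN$, and the assignment $(\text{extensional }G)\mapsto\lambda_G$ is continuous and onto $\C(\O(\Baire),\O(\Baire))$ (it is essentially the representation of this function space). Hence it remains to present the set of extensional realizers as a continuous image of a $\SSigma^1_2$-subset of $\Baire$, after which Theorem~\ref{thm_image_kk} delivers the continuous image of $\NN=\KK{2}$.

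The main obstacle is the complexity bookkeeping for extensionality. Equality of open sets $q(p)=q(p')$ is $\PPi^1_1$ (it unfolds to a statement of the form $\forall f\dots$, since already $[u]\subseteq q(p)$ is $\PPi^1_1$), so the condition $\forall p,p'\,[\,q(p)=q(p')\to q(G(p))=q(G(p'))\,]$ reads off naively as $\PPi^1_2$, which is one level too high. The crux is to bring it down to $\SSigma^1_2$: I would adjoin to $G$ an $\NN$-valued witness $y$ for the implication, turning the inner $\PPi^1_1\to\PPi^1_1$ into an arithmetic matrix guarded by a functional existential, so that the pair $(G,y)$ ranges over a $\PPi^1_1$-subset of $\NN\times\NN\cong\NN$; combined with $D_2\in\PPi^1_1(\Baire)$ (Theorem~\ref{thm_image_kk}) and $\PPi^1_1\subseteq\SSigma^1_2$, the valid parameters form a $\SSigma^1_2$-subset of $\Baire$ surjecting continuously onto $\C(\O(\Baire),\O(\Baire))$. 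The delicate point — and where I expect the real work to lie — is checking that this witnessing is faithful, i.e.\ that the $\NN$-definable Skolemization captures exactly the extensional realizers and not a proper subclass; this is precisely where the non-Hausdorffness of $\O(\Baire)$ and the fact that it is not a retract of $\Baire$ (so that Lemma~\ref{lem_retract_open} is unavailable and one is forced through the hyperprojective characterization) make the argument substantive.
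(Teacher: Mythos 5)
Your setup is fine: reducing the statement to ``$\O(\Baire\times\O(\Baire))$ is a continuous image of $\NN$'', transposing to $\C(\O(\Baire),\O(\Baire))$, and describing that space via extensional realizers $G\in\C(\Baire,\Baire)$ are all correct. But the proof then never actually happens: everything rests on showing that the extensional realizers can be captured by a $\SSigma^1_2$-subset of $\Baire$, and for this you only record the naive upper bound $\PPi^1_2$ and then announce a Skolemization whose decisive property (``that the witnessing is faithful'') you explicitly leave unchecked. That property is not a routine verification. Concretely, your witness $y$ would have to be, for each extensional $G$, a \emph{total continuous} functional selecting a point of $q(p)\triangle q(p')$ from any triple $(p,p',g)$ with $g\in q(G(p))\triangle q(G(p'))$; nothing you say shows that every extensional $G$ admits such a continuous, totally defined selector, and there is no evident reason it should. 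In effect you are asserting that a set whose natural complexity is $\PPi^1_2$ is in fact $\SSigma^1_2$; on your route that assertion \emph{is} the proposition, and it is missing.

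The gap is also avoidable, because currying in the other order dissolves extensionality instead of confronting it: this is what the paper does. Write $\O(\Baire\times\O(\Baire))\cong\C(\Baire,\O(\O(\Baire)))$, so that the exponent is $\Baire$, each of whose points is its own canonical name; then every continuous map from $\Baire$ is realized with no extensionality condition, and the difficulty moves into the target, where it is resolved by the fact that $\O(\O(\Baire))$ admits a \emph{total} representation $\delta\colon\Baire\to\O(\O(\Baire))$ equivalent to its canonical one. The map $f\mapsto\delta\circ f$ is then a continuous surjection from $\C(\Baire,\Baire)\cong\NN$ onto $\C(\Baire,\O(\O(\Baire)))\cong\O(\Baire\times\O(\Baire))$, with no descriptive-complexity bookkeeping and no appeal to Theorem~\ref{thm_image_kk} --- contrary to your closing claim that one is ``forced through the hyperprojective characterization''.
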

\begin{proof}
$\O(\O(\Baire))$ admits a total representation~$\delta:\Baire\to\O(\O(\Baire))$, equivalent to the canonical representation. Therefore the function~$\C(\Baire,\Baire)\to\C(\Baire,\O(\O(\Baire))$ mapping~$f$ to~$\delta\circ f$ is continuous and onto. Observe that~$\C(\Baire,\Baire)\cong\NN$ and~$\C(\Baire,\O(\O(\Baire))\cong\O(\Baire\times\O(\Baire))$.
\end{proof}

Moreover~$\O(\Baire\times\O(\Baire))$ is a computable image of a~$\Pi^1_1$-subset of~$\Baire$, so the computable open subsets of~$\Baire\times\O(\Baire)$ are~$\Pi^1_1$-enumerable and enumerable relative to a~$\Delta^1_2$-oracle.

\subsection{Space of clopens}
Let~$\X=\C(\Baire,2)$ be the space of clopen subsets of the Baire space.

\begin{theorem}\label{thm_CNN}
The space~$\C(\Baire,2)$ is not~$\Baire$-based. Its computable open sets are not~$\Sigma^1_1$-enumerable and are not enumerable relative to any~$\Delta^1_1$ oracle.
\end{theorem}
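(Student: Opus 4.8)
The plan is to reduce the statement to an application of Theorem \ref{thm_OX} with $\X=\C(\Baire,2)$, exactly as was done for $\Baire\times\O(\Baire)$ in Section \ref{sec_product}. The one new ingredient required is the analogue of Lemma \ref{lemma_OOBaire}, namely that $\O(\C(\Baire,2))$ admits a computable fixed-point free multifunction. Granting this, Theorem \ref{thm_OX} immediately gives that $\Baire\times\C(\Baire,2)$ is not $\Baire$-based and that its computable open sets are neither $\Sigma^1_1$-enumerable nor enumerable relative to a $\Delta^1_1$ oracle. To transfer these conclusions to $\C(\Baire,2)$ itself, I would first observe that $\Baire$ is computably isomorphic to a disjoint sum of two copies of itself, whence $\C(\Baire,2)\cong\C(\Baire,2)\times\C(\Baire,2)$; combining this with Lemma \ref{lem_retract_NN_NN2}, which makes $\Baire$ a computable retract of $\C(\Baire,2)$, shows that $\Baire\times\C(\Baire,2)$ is a computable retract of $\C(\Baire,2)$.

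It remains to see that all three impossibilities transfer upward along a computable retract. If $\Y$ is a computable retract of $\X$, then by Lemma \ref{lem_retract_open} $\O(\Y)$ is a computable retract of $\O(\X)$ through a computable surjection $R:\O(\X)\to\O(\Y)$ which carries the computable points of $\O(\X)$ onto those of $\O(\Y)$ (since $R(S(V))=V$ with $S(V)$ computable whenever $V$ is). Composing any continuous parametrization of $\O(\X)$ by $\Baire$, or any computable enumeration of its computable open sets, with $R$ would produce the corresponding object for $\O(\Y)$; hence each impossibility established for $\Y=\Baire\times\C(\Baire,2)$ forces the same impossibility for $\X=\C(\Baire,2)$, using Lemma \ref{lem_baire_based} for the first assertion. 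This reduces everything to the construction of the multifunction.

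For that construction, given a name of $\U\in\O(\C(\Baire,2))$ I would output a set of the form $\V=\{C\in\C(\Baire,2):g\in C\}$ for a point $g\in\Baire$ built during the computation; this $\V$ is open and computable from $g$, and for every $g$ one has $\Baire\in\V$ while $\emptyset\notin\V$. The key point is that the top-element trick of Lemma \ref{lemma_OOBaire} is unavailable here, because $\C(\Baire,2)$ carries the discrete value space $2$, so its topology is not monotone and a nonempty open $\U$ need not contain $\Baire$. Instead I feed the name of $\U$ a name of the full clopen set $\Baire$ (the constant $1$), presenting the facts ``$\equiv 1$ on $[v]$'' in an order that keeps some subcylinder of a reserved cylinder $[0]$ undeclared at every finite stage, while building $g$ inside $[0]$. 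If $\U$ never accepts, then $\Baire\notin\U$ and any completion of $g$ yields $\Baire\in\V\setminus\U$. If $\U$ accepts, it does so from finitely many facts ``$C\equiv 1$ on $[v_1],\dots,[v_k]$''; I then place $g$ in $[0]$ avoiding $[v_1]\cup\dots\cup[v_k]$ and set $C^*=\bigcup_{j}[v_j]$, so that $C^*\in\U$ (those facts certify membership) while $g\notin C^*$ gives $C^*\notin\V$. In either case $\V\neq\U$, so the multifunction is fixed-point free, and every step is computable.

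The main obstacle is precisely this last construction. The delicate part is to present the fed name of $\Baire$ so that acceptance, if it ever occurs, is witnessed by finitely many cylinders that fail to cover the reserved cylinder, leaving room to place $g$ outside them; this delayed-coverage bookkeeping is the multi-valued diagonalization that replaces the monotone argument of Lemma \ref{lemma_OOBaire}. Once it is carried out, the retract and transfer steps are routine.
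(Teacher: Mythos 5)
Your reduction and transfer steps are exactly the paper's: the paper likewise deduces the theorem from Theorem \ref{thm_OX} applied to $\Baire\times\C(\Baire,2)$, using a computable fixed-point free multifunction on $\O(\C(\Baire,2))$ (its Lemma \ref{lem_NN_fpf}), and then passes to $\C(\Baire,2)$ via $\Baire\times\C(\Baire,2)$ being a computable retract of $\C(\Baire,2)\times\C(\Baire,2)\cong\C(\Baire,2)$, Lemmas \ref{lem_retract_NN_NN2} and \ref{lem_retract_open}. You also correctly diagnose why the top-element trick of Lemma \ref{lemma_OOBaire} is unavailable. The gap is in your construction of the multifunction itself. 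Your algorithm must simultaneously satisfy three requirements: (i) the name of $\Baire$ you feed to $\U$ must be a \emph{valid} name, i.e.\ in the limit it declares a value on a cylinder containing every point of $\Baire$ --- otherwise non-acceptance does not entail $\Baire\notin\U$, since the functional naming $\U$ is only obliged to accept valid names; (ii) at every finite stage the cylinders declared so far must not cover the cylinder $[w]$ given by the \emph{current prefix} $w$ of $g$ (your stated invariant, ``some subcylinder of $[0]$ undeclared'', is too weak: if the undeclared region lies outside $[w]$, you cannot move $g$ there, as its prefix is already committed through the name of $\V$ being output); (iii) in the non-accepting run, $g$ must converge to an actual point $g^*$, else $\V$ is undefined. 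These three are incompatible: by (iii) the prefixes $w$ shrink to $g^*$; by (i) some declared cylinder $[u]$ must contain $g^*$, so $u$ is a prefix of $g^*$; as soon as $|w|\geq|u|$, all of $[w]\subseteq[u]$ is declared, violating (ii). Concretely, take $\U=\{C\in\C(\Baire,2):g^*\in C\}$ where $g^*$ is the point your algorithm builds along its non-accepting run, named by the functional ``accept a name of $C$ once it declares value $1$ on a cylinder containing $g^*$.'' Since your fed declarations never cover $g^*$, this name never accepts, your run stays in the non-accepting branch, and you output $\V=\{C:g^*\in C\}=\U$: a fixed point.

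The paper's Lemma \ref{lem_NN_fpf} escapes this with a two-phase argument that you should adopt. In phase one, extend $g$ by zeros while semi-deciding whether $\U\neq\emptyset$ (for instance by dovetailing names of a computable dense sequence of clopen sets through the name of $\U$); this produces, when it succeeds, a concrete witness $A\in\U$, rather than betting on $\Baire\in\U$. In phase two, \emph{freeze} $g$ at its current prefix $0^n$ and feed a valid name of $A$ arranged so that at every finite stage part of $[0^n]$ remains undeclared (possible because $[0^n]$ is not compact: leave $[0^nk],[0^n(k+1)],\dots$ untouched at stage $k$). Since $A\in\U$, this name is guaranteed to be accepted at some finite stage, so freezing $g$ is harmless; at acceptance, place $g$ in an undeclared subcylinder of $[0^n]$ and take $C$ consistent with the accepted prefix but avoiding $g$, so that $C\in\U\setminus\V$. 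The crucial structural difference from your proposal is that the undeclared region only has to stay inside the \emph{fixed} cylinder $[0^n]$, not inside a shrinking sequence of cylinders tracking $g$, which is exactly what reconciles requirements (i) and (ii).
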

To prove the theorem we need the following lemma.

\begin{lemma}\label{lem_NN_fpf}
The space~$\O(\C(\Baire,2))$ admits a computable fixed-point free multifunction.
\end{lemma}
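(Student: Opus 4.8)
The plan is to avoid constructing a fixed-point free multifunction on $\O(\C(\Baire,2))$ by hand, and instead reduce to Lemma~\ref{lemma_OOBaire} through a retraction. The enabling observation is a transfer principle: if a represented space $\Y$ is a computable retract of $\Z$ and $\Y$ admits a computable fixed-point free multifunction $h:\Y\tto\Y$, then so does $\Z$. Writing $(s,r)$ for the section-retraction with $r\circ s=\id_\Y$, the multifunction mapping $z$ to $\{s(y'):y'\in h(r(z))\}$ is computable and fixed-point free on $\Z$: if $z=s(y')$ for some $y'\in h(r(z))$, then $r(z)=r(s(y'))=y'\in h(r(z))$, contradicting that $h$ is fixed-point free at $r(z)$. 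Since $\O(\O(\Baire))$ admits a computable fixed-point free multifunction by Lemma~\ref{lemma_OOBaire}, it therefore suffices to prove that $\O(\O(\Baire))$ is a computable retract of $\O(\C(\Baire,2))$.

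To build this retraction I would use the computable map $q:\C(\Baire,2)\to\O(\Baire)$ sending a clopen set to itself, viewed as an open set (a decision procedure yields in particular an enumeration). Define the section $s:\O(\O(\Baire))\to\O(\C(\Baire,2))$ by $s(\U)=q^{-1}(\U)=\{C\in\C(\Baire,2):C\in\U\}$; being the preimage of an open set under a continuous map it is open, and it is plainly computable. For the retraction, recall that every finite union of cylinders is a clopen subset of $\Baire$, i.e.\ a computable point of $\C(\Baire,2)$, and set
$$r(\W)=\{U\in\O(\Baire):\exists\text{ finite union of cylinders }D\subseteq U\text{ with }D\in\W\}.$$
A realizer reads a name of $\W$ and a name of $U$ (an enumeration of the cylinders contained in $U$), forms the finite unions $D$ of enumerated cylinders, feeds a name of each such $D$ to the semidecision procedure for $\W$, and accepts $U$ as soon as one of them is accepted. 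This procedure is computable and its acceptance depends only on $U$, so $r$ is a well-defined computable map into $\O(\O(\Baire))$.

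It remains to verify $r\circ s=\id$, and I expect this to be the main point. The essential fact is that the Scott topology makes every open $\U\subseteq\O(\Baire)$ recoverable from its trace on finite unions of cylinders: each $U\in\O(\Baire)$ is the directed union of the finite unions of cylinders $D\subseteq U$, and $\U$ is an upper set that is inaccessible by directed suprema, so $U\in\U$ iff some finite union of cylinders $D\subseteq U$ lies in $\U$. Applying this with $\W=s(\U)$, and using that for a finite union of cylinders $D$ one has $D\in s(\U)\iff D\in\U$, gives $r(s(\U))=\{U:\exists\,D\subseteq U\text{ a finite union of cylinders with }D\in\U\}=\U$, as required. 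One should note why the easy route is unavailable: deducing the retraction from $\O(\Baire)$ being a retract of $\C(\Baire,2)$ via Lemma~\ref{lem_retract_open} cannot work, since $\O(\Baire)$ is not even $T_1$ whereas $\C(\Baire,2)$ is Hausdorff, so no continuous section $\O(\Baire)\to\C(\Baire,2)$ exists; the retraction genuinely has to be produced one level up, directly between the two hyperspaces, which is exactly what the trace-on-clopens construction above achieves.
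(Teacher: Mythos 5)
Your architecture is genuinely different from the paper's and is viable in principle: the paper proves this lemma by a direct diagonal construction (nearly identical to its Lemma~\ref{lemma_OOBaire}), building from a name of~$\U\in\O(\C(\Baire,2))$ a point~$g\in\Baire$ with~$\{C:g\in C\}\neq\U$, whereas you reduce to Lemma~\ref{lemma_OOBaire} by exhibiting~$\O(\O(\Baire))$ as a computable retract of~$\O(\C(\Baire,2))$ and invoking a transfer principle (which is exactly the paper's later Lemma~\ref{lem_retract_fpf}, correctly proved inline by you). Your section~$s$ is fine, and your verification of~$r\circ s=\id$ is correct. The gap is in the step you dismiss as plain: the computability (indeed, the well-definedness) of~$r$. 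A name of~$U\in\O(\Baire)$ is equivalent to an enumeration of \emph{some} sequence of cylinders whose union is~$U$, not of \emph{all} cylinders contained in~$U$ (the latter is a strictly stronger representation: from a standard name, ``$[v]\subseteq U$'' is not semidecidable --- already ``$U=\Baire$'' is~$\Pi^1_1$-complete). Under the honest reading, your realizer, which only tests finite unions of \emph{enumerated} cylinders, is not extensional. Concretely, let~$f_0=0^\N$,~$f_1=1^\N$ and~$\W=\{C\in\C(\Baire,2):f_0\in C\text{ and }f_1\notin C\}$, a computably open subset of~$\C(\Baire,2)$. Then~$\Baire\in r(\W)$, witnessed by~$D=[0]$. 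But on the name of~$\Baire$ that enumerates only the cylinder given by the empty word, the realizer can only test~$\emptyset$ and~$\Baire$, neither of which lies in~$\W$, so it rejects forever; on the name enumerating~$[0],[1],[2],\dots$ it accepts. So the procedure does not induce a map~$\O(\C(\Baire,2))\to\O(\O(\Baire))$ at all.

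The gap is repairable, and with ideas you already use: let the realizer instead test every finite union of cylinders \emph{each of which extends some enumerated cylinder} (this still guarantees~$D'\subseteq U$, giving soundness). For completeness, suppose~$D=[v_1]\cup\dots\cup[v_k]\subseteq U$ with~$D\in\W$; the cylinders refining both some enumerated cylinder and some~$[v_i]$ cover~$D$, so enumerating them and taking increasing finite unions~$E_n$ gives clopen sets with~$E_n\subseteq D$ and~$\bigcup_n E_n=D$, and~$E_n\to D$ in~$\C(\Baire,2)$: for~$f_n\to f$, if~$f\in D$ then~$f$ is interior to some~$E_m$, so eventually~$f_n\in E_m\subseteq E_n$, and if~$f\notin D$ then eventually~$f_n\in D^{\mathrm{c}}$, so~$f_n\notin E_n$. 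Since~$\W$ is (sequentially) open and contains~$D$, some~$E_n\in\W$, and the realizer accepts; acceptance is then name-independent and equals membership in~$r(\W)$. Note that this convergence phenomenon is the same one underlying your~$r\circ s=\id$ argument --- the subtlety you correctly identified there also governs the realizer, and it is precisely what your original realizer misses. With this repair, your proof is complete and stands as a genuine alternative to the paper's construction: it trades the paper's hands-on diagonalization inside~$\O(\C(\Baire,2))$ for a structural reduction, at the cost of having to handle the hyperspace representation of~$\O(\Baire)$ with more care than you did. Your closing remark (that no continuous section~$\O(\Baire)\to\C(\Baire,2)$ can exist, so the retraction must be built one level up) is correct and well taken.
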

The proof is almost identical to the proof of Lemma \ref{lemma_OOBaire}.
\begin{proof}
Given a name of~$\U\in\O(\C(\Baire,2))$, start computing a function~$g\in\Baire$ as explained below, and output~$\V=\{C\in \C(\Baire,2):g\in C\}\in \O(\C(\Baire,2))$.

Start defining~$g=0$ on the first inputs. At the same time, wait to see whether~$\U$ is non-empty. If eventually~$\U\neq\emptyset$ then at that point~$g$ is defined up to input~$n$. One can find some~$C\in \U$ and~$[0^n]\nsubseteq C$. Indeed, start from some~$A\in\U$ and generate a name of~$A$ which does not cover~$[0^n]$ in finite time. This name is eventually accepted by the name of~$\U$. Once~$C$ is found, let~$g\in[0^n]\setminus C$.

We now check that~$\V\neq\U$. If~$\U$ is empty then~$\V\neq \U$ as~$\V$ is non-empty (and in that case,~$g=0$). If~$\U$ is non-empty then one finds~$C\in\U$ and~$g\notin C$, so~$C\notin \V$ hence~$\V\neq\U$.
\end{proof}

\begin{proof}[Proof of Theorem \ref{thm_CNN}]
Let~$\X=\C(\Baire,2)$. Lemma \ref{lem_NN_fpf} and Theorem \ref{thm_OX} imply the result for the space~$\Baire\times\X$.

We know from Lemma \ref{lem_retract_NN_NN2} that~$\Baire$ is a computable retract of~$\X$, so~$\Baire\times \X$ is a computable retract of~$\X\times\X\cong\X$. As a result Lemma \ref{lem_retract_open} implies that~$\O(\Baire\times \X)$ is a computable retract, hence a computable image of~$\O(\X)$, so the negative results about~$\O(\Baire\times \X)$ immediately apply to~$\O(\X)$.
\end{proof}

Observe that Theorem \ref{thm_CNN} immediately applies to the space~$\NN=\C(\Baire,\N)$, as~$\C(\Baire,2)$ is a computable retract of~$\NN$.

We will now prove the stronger result that~$\C(\Baire,2)$ and~$\NN$ are not even~$\NN$-based (Theorem \ref{thm_CNN2} below), and more generally~$\KK{k}$ is not~$\KK{k}$-based for~$k\geq 2$. For this, we need to refine our techniques, as we explain now.

\section{Canonicity of names}\label{sec_canon}
The proof of Proposition \ref{prop_continuous} works because~$\Baire$ has a particular property: every element~$f\in\Baire$ has a canonical name (which is~$f$ itself). This property has the consequence that every continuous (resp.\ computable) multifunction~$F:\Baire\tto\X$ has a continuous (resp.\ computable) choice function~$F':\Baire\to\X$, such that~$F'(f)\in F(f)$ for all~$f\in\Baire$. So the multifunction built using the diagonal argument and the fixed-point free multifunction is actually a single-valued function.

The spaces having this canonicity property are exactly the subspaces of~$\Baire$. The proof of Proposition \ref{prop_continuous} does not carry over to spaces without this canonicity property, like~$\R$ or~$\NN$ for instance. However we provide a technique to overcome this problem.

\begin{definition}\label{def_canon}
A represented space~$\X$ is \defin{canonizable} if there exists a subset~$P\subseteq\X$ such that:
\begin{itemize}
\item $P$ is closed,
\item $\X$ is a continuous image of~$P$,
\item $P$ is homeomorphic to some subspace of~$\Baire$.
\end{itemize}
\end{definition}

The first property implies that~$P$ as a represented subspace of~$\X$ has the subspace topology. The third property exactly says that~$P$ has the canonicity property, i.e.\ that there is a continuous function mapping every~$x\in P$ to a name of~$x$. 

\begin{example}
$\R$ is canonizable, because it is a continuous image of~$\N\times \cantor$ which embeds as a closed set in~$\R$.
\end{example}

When a space is canonizable, the diagonal argument can be applied.
\begin{theorem}\label{thm_canon}
Let~$\X$ be a canonizable space. If~$\O(\Y)$ admits a continuous fixed-point free multifunction then~$\O(\X\times \Y)$ is not a continuous image of~$\X$.
\end{theorem}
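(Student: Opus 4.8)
The plan is to reduce the statement to the Remark following Proposition~\ref{prop_continuous}, which already handles parameter spaces that are subspaces of~$\Baire$, by peeling off the three ingredients of canonizability one at a time. Throughout I write~$\Z:=\O(\Y)$ and use the exponential law~$\O(\X\times\Y)\cong\C(\X,\Z)$, so that the claim becomes: $\C(\X,\Z)$ is not a continuous image of~$\X$.

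Suppose for contradiction that~$\O(\X\times\Y)$ is a continuous image of~$\X$. Let~$P\subseteq\X$ witness canonizability, with~$q:P\to\X$ a continuous surjection (property two) and~$P$ homeomorphic to a subspace~$A\subseteq\Baire$ (property three). First I would use property two to transport the surjection down to~$P$: composing with~$q$ shows that~$\O(\X\times\Y)$ is already a continuous image of~$P$. Next I would use property one, that~$P$ is closed in~$\X$. Then~$P\times\Y$ is closed in~$\X\times\Y$ (it is closed for the product topology, hence for the finer topology carried by the represented product), so Lemma~\ref{lem_retract_closed} makes~$\O(P\times\Y)$ a continuous retract, in particular a continuous image, of~$\O(\X\times\Y)$. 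Composing the two surjections,~$\O(P\times\Y)$ is a continuous image of~$P$.

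It remains to read off a contradiction. By the exponential law,~$\O(P\times\Y)\cong\C(P,\Z)$, so~$\C(P,\Z)$ is a continuous image of~$P$. Property three lets me replace~$P$ by the subspace~$A\subseteq\Baire$ to which it is isomorphic as a represented space, yielding that~$\C(A,\Z)$ is a continuous image of~$A$. But~$\Z=\O(\Y)$ carries a continuous fixed-point free multifunction by hypothesis, so the Remark following Proposition~\ref{prop_continuous} asserts precisely that~$\C(A,\Z)$ is \emph{not} a continuous image of~$A$. This contradiction proves the theorem.

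The step I expect to be the main obstacle is the passage from~$P$ to~$A$ in the last paragraph: ``homeomorphic to a subspace of~$\Baire$'' must be upgraded to an isomorphism of represented spaces, so that both the property ``is a continuous image of'' and the curry isomorphism transfer faithfully across the identification~$P\cong A$. This is exactly the canonicity property emphasized after Definition~\ref{def_canon}: the homeomorphism supplies a continuous map sending each~$x\in P$ to a genuine name of~$x$, while its inverse is continuously realizable because subspaces of~$\Baire$ carry their identity representation; together these make the reduction robust. A secondary point to check carefully is that~$P$ being closed in~$\X$ really does give~$P\times\Y$ closed in the represented product~$\X\times\Y$, which is what licenses the application of Lemma~\ref{lem_retract_closed}.
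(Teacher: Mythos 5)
Your proof is correct and is essentially the paper's own argument: both reduce to the Remark following Proposition~\ref{prop_continuous} applied to a subspace of~$\Baire$ identified with~$P$, use the closedness of~$P$ (via Lemma~\ref{lem_retract_closed} applied to~$P\times\Y$) to make~$\O(P\times\Y)$ a continuous retract of~$\O(\X\times\Y)$, and finish with the surjection~$P\to\X$; you merely phrase the transitivity as a contradiction where the paper states it directly. The one delicate point --- upgrading ``homeomorphic to a subspace of~$\Baire$'' to an isomorphism of represented spaces so that the reduction transfers --- is handled at the same level of rigor as in the paper (which asserts it in the gloss after Definition~\ref{def_canon}), and you in fact flag it more explicitly than the paper does.
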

\begin{proof}
Let~$P$ make~$\X$ canonizable, and let~$Q\subseteq\Baire$ be homeomorphic to~$P$. Proposition \ref{prop_continuous} implies that~$\O(Q\times\Y)\cong \C(Q,\O(\Y))$ is not a continuous image of~$Q$, hence~$\O(P\times\Y)$ is not a continuous image of~$P$. As~$P$ is a closed subset of~$\X$,~$\O(P\times \Y)$ is a continuous retract, hence a continuous image of~$\O(\X\times\Y)$. As~$\X$ is a continuous image of~$P$, 
one concludes by transitivity that~$\O(\X\times\Y)$ is not a continuous image of~$\X$. 
\end{proof}

This result has a computable version. We say that~$\X$ is \defin{computably canonizable} if the set~$P$ is a~$\Pi^0_1$-subset of~$\X$, if~$\X$ is a computable image of~$P$ and~$P$ is computably homeomorphic to some subspace of~$\Baire$. In that case, the same proof shows that there is no computable function from~$\X$ to~$\O(\X\times\Y)$ containing all the computable open sets in its range.

\subsection{Kleene-Kreisel functionals as parameters}

The notion of a canonizable space enables us to extend Theorem \ref{thm_OX} from~$\Baire=\KK{1}$ to the whole hierachies functionals of finite and countable type.

Observe that for~$\alpha\geq 1$ a space~$\X$ is~$\KK{\alpha}$-based if and only if~$\O(\X)$ is a continuous image of~$\KK{\alpha}$, because as in the proof of Lemma \ref{lem_baire_based}, a continuous function from~$\KK{\alpha}$ to a basis of the topology on~$\X$ can be extended to a continuous surjection from~$\C(\N,\KK{\alpha})\cong\KK{\alpha}$ to~$\O(\X)$.

\begin{theorem}\label{thm_KKalpha}
\begin{enumerate}
\item If~$\O(\X)$ admits a continuous fixed-point free multifunction then for each countable ordinal~$\alpha$,~$\KK{\alpha}\times \X$ is not~$\KK{\alpha}$-based.
\item If~$\O(\X)$ admits a computable fixed-point free multifunction then for each~$k\in\N$ the computable open subsets of~$\KK{k}\times \X$:
\begin{itemize}
\item Are not~$\Sigma^1_k$-enumerable,
\item Are not enumerable relative to any~$\Delta^1_k$ oracle.
\end{itemize}
\end{enumerate}
\end{theorem}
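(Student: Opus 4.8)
The plan is to reduce both parts to the single statement that $\KK{\alpha}$ is canonizable (and that $\KK{k}$ is computably canonizable), and then to invoke Theorem~\ref{thm_canon} together with its computable version. Indeed, applying Theorem~\ref{thm_canon} with the canonizable space taken to be $\KK{\alpha}$ and the auxiliary space taken to be $\X$, the hypothesis that $\O(\X)$ admits a continuous fixed-point free multifunction yields that $\O(\KK{\alpha}\times\X)$ is not a continuous image of $\KK{\alpha}$. By the observation preceding the theorem---for $\alpha\geq 1$ a space is $\KK{\alpha}$-based exactly when its space of opens is a continuous image of $\KK{\alpha}$---this is precisely the statement that $\KK{\alpha}\times\X$ is not $\KK{\alpha}$-based, giving part~(1). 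So everything rests on showing that $\KK{\alpha}$ is canonizable.

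To prove canonizability I would exhibit as the canonizing set a closed copy of the representation domain $D_\alpha=\dom(\delta_\alpha)$ sitting inside $\KK{\alpha}$. The natural candidate is the graph $G_\alpha=\{(\delta_\alpha(p),p):p\in D_\alpha\}\subseteq\KK{\alpha}\times\Baire$. Its second projection is a homeomorphism of $G_\alpha$ onto $D_\alpha$, a subspace of $\Baire$, while its first projection is a continuous surjection $G_\alpha\to\KK{\alpha}$ (it is just $\delta_\alpha$); so once $G_\alpha$ is placed as a closed subset of $\KK{\alpha}$, all three requirements of Definition~\ref{def_canon} hold, the surjection onto $\KK{\alpha}$ coming from the internal structure of $G_\alpha$ rather than from the embedding. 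To place it inside $\KK{\alpha}$ I would use that $\Baire$ embeds as a closed subspace of $\KK{\alpha}$ (for finite types by iterating the closed embedding $f\mapsto(g\mapsto f(g(0)))$; at a limit stage one embeds into a factor of positive rank, using that Kleene--Kreisel spaces are $T_1$) and that $\KK{\alpha}\cong\KK{\alpha}\times\KK{\alpha}$, a standard consequence of the inductive definition; together these give a closed embedding $\KK{\alpha}\times\Baire\hookrightarrow\KK{\alpha}$, and $P$ is the image of $G_\alpha$.

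The main obstacle is exactly the closedness of $G_\alpha$ in $\KK{\alpha}\times\Baire$: one must show that if $p_n\in D_\alpha$ with $p_n\to p$ in $\Baire$ and $\delta_\alpha(p_n)$ converging in $\KK{\alpha}$, then $p\in D_\alpha$ (after which Hausdorffness of $\KK{\alpha}$ forces the limit into the graph). This is a properness property of the admissible representation $\delta_\alpha$ of~\cite{SchroderS15a}, and I expect verifying it---by induction on $\alpha$, tracking how a converging name together with a converging value certifies totality of the limiting name---to be the technical heart of the argument. For the computable statement one checks the effective analogue: that $G_k$ is $\Pi^0_1$ in $\KK{k}\times\Baire$, that the projections are computable in both directions (the inverse of the second projection is $p\mapsto(\delta_k(p),p)$, whose realizer is the identity), and that the embedding $\KK{k}\times\Baire\hookrightarrow\KK{k}$ is computable, so that $\KK{k}$ is computably canonizable.

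Finally, for the effective enumeration statements in part~(2) I would mirror Propositions~\ref{prop_pi01} and~\ref{prop_oracle} with $\KK{k}$ in the role of $\Baire$. The computable version of Theorem~\ref{thm_canon} gives that no computable function from $\KK{k}$ to $\O(\KK{k}\times\X)$ contains all computable open sets in its range. To upgrade this to non-$\Sigma^1_k$-enumerability I would use Theorem~\ref{thm_kk_sigma}: every $\Sigma^1_k$-subset of $\N$ is a computable image of a $\Pi^0_1$-subset of $\KK{k}$. A hypothetical computable enumeration from such a set then composes to a computable map on a $\Pi^0_1$-subset $P'$ of $\KK{k}$, which extends to a total computable map on $\KK{k}$ by the argument of Lemma~\ref{lemma_extension}; that argument goes through verbatim because non-membership in a $\Pi^0_1$-subset of any represented space is semidecidable from names, and the codomain $\O(\KK{k}\times\X)$ has a computably separable representation (the requisite dense computable sequence in $\KK{k}$ being provided by Lemma~\ref{lem_kk_dense}). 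The resulting total computable map would contain all computable open sets, contradicting the computable version of Theorem~\ref{thm_canon}. The $\Delta^1_k$-oracle statement follows as in Proposition~\ref{prop_oracle}: for a $\Delta^1_k$ oracle $A$ the singleton $\{A\}$ is $\Sigma^1_k$, hence again a computable image of a $\Pi^0_1$-subset of $\KK{k}$, and the same extension-and-diagonalization argument applies.
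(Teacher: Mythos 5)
Your overall skeleton matches the paper's: both parts reduce to (computable) canonizability of $\KK{\alpha}$, then Theorem~\ref{thm_canon} and its computable version apply, with Theorem~\ref{thm_kk_sigma}, Lemma~\ref{lem_kk_dense} and the extension argument of Lemma~\ref{lemma_extension} handling the $\Sigma^1_k$- and $\Delta^1_k$-oracle claims exactly as in the paper. But the one step you defer---closedness of $G_\alpha=\{(\delta_\alpha(p),p):p\in D_\alpha\}$ in $\KK{\alpha}\times\Baire$---is not a checkable technicality: it is false, already for $\alpha=2$ and the canonical function-space representation (the representation of \cite{SchroderS15a} is of this kind). A name of $x\in\NN=\C(\Baire,\N)$ is a list of pairs $(u,n)$, consistent with $x$, whose cylinders $[u]$ cover $\Baire$. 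Take $x=\mathbf{0}$ the constant zero function, and let $p$ list all pairs $([0^k(m+1)],0)$ for $k,m\in\N$; these cylinders cover only $\Baire\setminus\{0^\infty\}$, so $p\notin D_2$. Let $p_n$ list the first $n$ pairs of $p$, then the pair $(\varepsilon,0)$ (empty word), then the remaining pairs of $p$. Each $p_n$ is a valid name of $\mathbf{0}$, and $p_n\to p$ coordinatewise in $\Baire$. Hence $(\mathbf{0},p_n)\in G_2$ converges in $\NN\times\Baire$ to $(\mathbf{0},p)\notin G_2$ (pair a fixed name of $\mathbf{0}$ with $p_n$), so $G_2$ is not sequentially closed. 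The failure is structural, not an artifact of the encoding: if $G_\alpha$ were closed, then pulling it back along the continuous map $p\mapsto(x,p)$ would make every fiber $\delta_\alpha^{-1}(x)$ closed in $\Baire$, yet fibers of function-space representations are never closed, because totality of a name can be destroyed in the limit while the named point---and a fortiori the convergence of values you invoke---stays fixed. Convergence of $\delta_\alpha(p_n)$ gives no control over totality of $\lim p_n$; this intensional/extensional mismatch is exactly what the section's machinery is built to circumvent.

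The paper's Lemma~\ref{lem_kkk_canon} repairs this with a construction your proposal lacks: instead of pairing a name $x\in D_{\alpha+1}$ with the point it names, pair it with a \emph{witness functional}. Since $D_{\alpha+1}\in\PPi^1_\alpha$, Theorem~\ref{thm_image_kk} yields a continuous surjection $f:\KK{\alpha}\to\Baire\setminus D_{\alpha+1}$ onto the \emph{complement}, and one sets $\psi(x)(y)=\min\{n:x_n\neq f(y)_n\}\in\N$, so $\psi(x)\in\KK{\alpha+1}$; the canonizing set is the graph $P$ of $\psi$ in $\Baire\times\KK{\alpha+1}$. Membership of $(x,F)$ in $P$ is the condition ``for all $y\in\KK{\alpha}$: if $n=F(y)$ then $x_n\neq f(y)_n$ and $x_m=f(y)_m$ for $m<n$.'' Any violation is witnessed openly by some $y$ (and by some $y$ in a countable dense set, which with Lemma~\ref{lem_kk_dense} and Theorem~\ref{thm_kk_sigma} gives the $\Pi^0_1$ bound needed for part~2), and---the point your graph misses---the condition automatically forces $x\in D_{\alpha+1}$: if $x\notin D_{\alpha+1}$ then $x=f(y)$ for some $y$, and that $y$ violates the condition. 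So the witness $F$ turns totality of $x$ into a closed condition on the pair, which is precisely what fails for the pair $(\delta_\alpha(p),p)$. Without this construction (or a substitute for it), both parts of your proof are incomplete.
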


The proof essentially follows the same argument as the proof of Theorem \ref{thm_OX}, but we first need to show that the spaces~$\KK{\alpha}$ are canonizable.

\begin{lemma}\label{lem_kkk_canon}
For every countable ordinal~$\alpha$,~$\KK{\alpha}$ is canonizable.
\end{lemma}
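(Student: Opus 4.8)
The plan is to prove the statement by transfinite induction on $\alpha$, following the inductive definition of $\KK{\alpha}$. Throughout I record the canonizing data as a closed set $P_\alpha\subseteq\KK{\alpha}$ together with a homeomorphism $h_\alpha\colon P_\alpha\to Q_\alpha$ onto a subspace $Q_\alpha\subseteq\Baire$ and a continuous surjection $r_\alpha\colon P_\alpha\to\KK{\alpha}$; since $P_\alpha$ is closed, its induced topology is the subspace topology, as noted after Definition \ref{def_canon}.

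\textbf{Base and limit cases.} For $\alpha=0$ one takes $P_0=\KK{0}=\N$, which is already a subspace of $\Baire$, with $r_0=\id$. For a limit ordinal $\lambda$, with $\KK{\lambda}=\prod_i\KK{\beta(\lambda,i)}$, I would set $P_\lambda=\prod_i P_{\beta(\lambda,i)}$. A countable product of closed sets is closed in the Tychonoff product, hence a fortiori closed in the richer product topology carried by represented spaces; the product of the homeomorphisms $h_{\beta(\lambda,i)}$ exhibits $P_\lambda$ as the represented product $\prod_i Q_{\beta(\lambda,i)}$, which embeds as a subspace of $\prod_i\Baire\cong\Baire$; and $\prod_i r_{\beta(\lambda,i)}$ is a continuous surjection onto $\KK{\lambda}$. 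These three verifications are routine.

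\textbf{Successor case (the heart).} Fix the admissible representation $\delta_{\alpha+1}\colon D_{\alpha+1}\to\KK{\alpha+1}$ with $D_{\alpha+1}\subseteq\Baire$; recall $D_{\alpha+1}\in\PPi^1_\alpha(\Baire)$ by Theorem \ref{thm_image_kk}. Since $D_{\alpha+1}$ is a subspace of $\Baire$ and surjects continuously onto $\KK{\alpha+1}=\C(\KK{\alpha},\N)$ via $\delta_{\alpha+1}$, it suffices to realize a homeomorphic copy of $D_{\alpha+1}$ as a \emph{closed} subset of $\C(\KK{\alpha},\N)$. I would build this copy inside $\C(\KK{\alpha},\N)\cong\C(\KK{\alpha},\N)\times\C(\KK{\alpha},\N)$ (using $\N\cong\N\times\N$) by encoding a name together with the functional it denotes. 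Concretely, fix a surjective continuous $c\colon\KK{\alpha}\to\N$ (built along the induction: an evaluation map at a point in the successor case, a projection in the limit case) and define $\iota\colon\Baire\to\C(\KK{\alpha},\N)$ by $\iota(p)(z)=p(c(z))$. Then $\iota$ is a homeomorphism onto the closed set $B=\{\Phi:\Phi(z)\text{ depends only on }c(z)\}$, the raw name $p$ being recovered from $\iota(p)$ by evaluation at probes $\zeta_n$ with $c(\zeta_n)=n$. I then set
$$P_{\alpha+1}=\{(\delta_{\alpha+1}(p),\iota(p)):p\in D_{\alpha+1}\}\subseteq\C(\KK{\alpha},\N)\times\C(\KK{\alpha},\N).$$
The second coordinate gives a homeomorphism $P_{\alpha+1}\to D_{\alpha+1}\subseteq\Baire$ (read off $p$), and the first projection is a continuous surjection onto $\KK{\alpha+1}$ (because $\delta_{\alpha+1}$ is onto), so the first two requirements of canonizability are met; note that the first coordinate is redundant for these two points and is present solely to control closedness.

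\textbf{Main obstacle.} The crux is to show that $P_{\alpha+1}$ is closed, equivalently that the graph $\{(p,\delta_{\alpha+1}(p)):p\in D_{\alpha+1}\}$ is closed in $\Baire\times\C(\KK{\alpha},\N)$ and not merely in $D_{\alpha+1}\times\C(\KK{\alpha},\N)$. This is exactly where the non-closed ($\PPi^1_\alpha$) set $D_{\alpha+1}$ must be compensated by the strictly richer topology of the function space: one has to rule out a sequence of valid names $p_k\to p\notin D_{\alpha+1}$ whose denoted functionals $\delta_{\alpha+1}(p_k)$ nevertheless converge in $\C(\KK{\alpha},\N)$. I expect this to follow from the admissibility of $\delta_{\alpha+1}$ together with the sequential (function-space) nature of convergence in $\C(\KK{\alpha},\N)$: convergence of the functionals should force the extensionality and totality data carried by the names to stabilize, so that the limiting name remains valid. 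Making this precise, presumably by an argument internal to the construction of $\delta_{\alpha+1}$ and possibly by a secondary induction, is the step I anticipate to be the most delicate, and it is the only place where the inductive hypothesis on $\KK{\alpha}$ (through $c$ and the structure of $D_{\alpha+1}$) is genuinely used.
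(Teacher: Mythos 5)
Your base and limit cases match the paper's, but the successor case has a fatal gap, located exactly at the step you yourself flag as the ``main obstacle'': the set $P_{\alpha+1}=\{(\delta_{\alpha+1}(p),\iota(p)):p\in D_{\alpha+1}\}$ is in general \emph{not} closed, so no refinement of the admissibility/sequentiality argument can complete the proof. The phenomenon you hope to rule out --- valid names $p_k\to p\notin D_{\alpha+1}$ whose denoted functionals converge --- really occurs, already for $\alpha=1$. Using the representation of $\C(\Baire,\N)$ described in Section 2 (a name lists pairs $(u,v)$ certifying values on cylinders), let $p_k$ be a name of the constant-$0$ functional whose list begins with the pairs certifying value $0$ on the cylinders $[0^i]$ for $1\le i\le k$ only, and afterwards enumerates a genuine covering of $\Baire$. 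The $p_k$ agree longer and longer, so $p_k\to p$ in $\Baire$, where the limit $p$ lists only the pairs with first component $0^i$; for $q=111\dots$ no pair in that list applies to $q$, so condition (ii) of the representation fails and $p\notin D_2$. Yet $\delta_2(p_k)$ is the constant-$0$ functional for every $k$, so $(\delta_2(p_k),\iota(p_k))$ converges in $\KK{2}\times\KK{2}$ to a point outside your set. Totality/covering information carried by names is simply not preserved in limits, and the converging functionals cannot compensate, since they may all be equal; this is also why your hope that ``the totality data stabilizes'' is unfounded.

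The paper's proof avoids the graph of $\delta_{\alpha+1}$ altogether, and this is the idea your proposal is missing. Since $D_{\alpha+1}\in\PPi^1_\alpha(\Baire)$, Theorem \ref{thm_image_kk} yields a continuous surjection $f:\KK{\alpha}\to\Baire\setminus D_{\alpha+1}$ onto the \emph{complement}. Each $x\in D_{\alpha+1}$ is then paired not with the functional it denotes but with a certificate of membership: $\psi(x)\in\KK{\alpha+1}$ defined by $\psi(x)(y)=\min\{n\in\N:x_n\neq f(y)_n\}$, which is well defined precisely because $x\in D_{\alpha+1}$ while $f(y)\notin D_{\alpha+1}$. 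The graph $P$ of $\psi$ is closed in $\Baire\times\KK{\alpha+1}$: the condition ``for every $y$, $F(y)$ is the first coordinate where $x$ and $f(y)$ differ'' can be refuted at a finite stage by testing $y$ in a countable dense subset of $\KK{\alpha}$, and --- the key point --- any pair $(x,F)$ satisfying it automatically has $x\neq f(y)$ for all $y$, hence $x\in D_{\alpha+1}$. In other words, the second coordinate internalizes the $\PPi^1_\alpha$ membership condition as a closed condition, which is exactly what your redundant first coordinate fails to do. Finally $P$ is homeomorphic to the graph of a continuous realizer of $\psi$ (a subspace of $\Baire$), and $\KK{\alpha+1}$ is the continuous image of $P$ under $(x,F)\mapsto\delta_{\alpha+1}(x)$.
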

\begin{proof}
The proof is inspired by a construction appearing in the proof of Theorem 7.2 in \cite{SchroderS15a}.

We first prove the result for all successor ordinals. $D_{\alpha+1}$ is a~$\PPi^1_\alpha$-set, so there exists a continuous surjective function~$f:\KK{\alpha}\to \Baire\setminus D_{\alpha+1}$. We define~$\psi:D_{\alpha+1}\to\KK{\alpha+1}$ as follows. For~$x\in D_{\alpha+1}$ and~$y\in\KK{\alpha}$, let~$\psi(x)(y)=\min\{n\in\N:x_n\neq f(y)_n\}$. $\psi$ is continuous, let~$\Psi:D_{\alpha+1}\to\Baire$ be a continuous realizer of~$\psi$. Let~$P\subseteq\Baire\times\KK{\alpha+1}$  be the graph of~$\psi$. $P$ is homeomorphic to the graph of~$\Psi$ which is a subset of~$\Baire\times\Baire\cong\Baire$. $P$ is a closed subset of~$\Baire\times\KK{\alpha+1}$. Indeed, a pair~$(x,F)$ belongs to~$P$ iff for all~$y\in\KK{\alpha}$, if~$n=F(y)$ then~$x_n\neq f(y)_n$ and~$x_m=f(y)_m$ for all~$m<n$, which can be checked for~$y$ in some countable dense subset of~$\KK{\alpha}$. If this is not true then one can eventually see it. Finally,~$\KK{\alpha+1}$ is a continuous image of~$D_{\alpha+1}$ which is the first projection of~$P$.

For limit ordinals, we prove the result by induction. Let~$\lambda$ be a limit ordinal. For each each~$\beta<\lambda$, let~$P_\beta$ witness that~$\KK{\beta}$ is canonizable. By definition,~$\KK{\lambda}=\prod_{i\in\omega}\KK{\beta(\lambda,i)}$ for some numbering~$\beta(\lambda,i)$ of the ordinals smaller than~$\lambda$, and one can easily check that~$\prod_{i\in\omega}P_{\beta(\lambda,i)}$ makes~$\KK{\lambda}$ canonizable.
\end{proof}

For the finite levels of the hierarchy we can prove an effective version.
\begin{lemma}\label{lem_kk_canon}
For~$k\geq 1$,~$\KK{k}$ is computably canonizable.
\end{lemma}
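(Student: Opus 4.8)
The plan is to prove Lemma \ref{lem_kk_canon} by giving an effective version of the proof of Lemma \ref{lem_kkk_canon}, restricted to the finite levels $\KK{k}$. The statement requires, for each $k\geq 1$, a $\Pi^0_1$-subset $P\subseteq\KK{k}$ such that $\KK{k}$ is a \emph{computable} image of $P$ and $P$ is \emph{computably} homeomorphic to some subspace of $\Baire$. I would proceed by induction on $k$, mirroring the successor-ordinal case of the previous lemma but replacing every continuity claim with a computability claim and every closedness claim with a $\Pi^0_1$ claim.

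First I would establish the base case. For $k=1$ we have $\KK{1}=\Baire$, which is trivially computably canonizable by taking $P=\Baire$ itself (closed, equal to $\Baire$ hence a $\Pi^0_1$-set, a computable image of itself via the identity, and computably homeomorphic to the subspace $\Baire\subseteq\Baire$). For the inductive step from $k$ to $k+1$, I would reproduce the successor construction from Lemma \ref{lem_kkk_canon} with $\alpha=k$, but now insisting on effectivity throughout. The key inputs I need in effective form are: (i) by Theorem \ref{thm_image_kk}, $D_{k+1}\in\PPi^1_k(\Baire)$, and I would want a \emph{computable} surjection $f:\KK{k}\to\Baire\setminus D_{k+1}$ rather than merely a continuous one; (ii) the diagonalizing map $\psi(x)(y)=\min\{n:x_n\neq f(y)_n\}$ should be \emph{computable}, giving a computable realizer $\Psi$; (iii) the graph $P$ of $\psi$ should be a $\Pi^0_1$-subset of $\Baire\times\KK{k+1}$, not merely closed; and (iv) the dense sequence of $\KK{k}$ used to verify membership in $P$ should be \emph{computable}, which is exactly what Lemma \ref{lem_kk_dense} provides for the finite types. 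The computable dense sequence is what restricts this effective argument to finite $k$, since Lemma \ref{lem_kk_dense} is stated only for $\KK{k}$ with $k\in\N$.

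The checking of the $\Pi^0_1$ condition is where the effective version must be argued carefully. A pair $(x,F)$ lies in $P$ iff for every $y$ in the computable dense subset of $\KK{k}$, writing $n=F(y)$, one has $x_m=f(y)_m$ for all $m<n$ and $x_n\neq f(y)_n$. Each such condition, for a fixed dense point $y$, is a decidable predicate on finite prefixes of the names of $x$ and $F$, so the complement is enumerable: one semidecides $(x,F)\notin P$ by searching over dense points $y$ for a violation. Hence $P$ is $\Pi^0_1$, and on $P$ it is computably homeomorphic to the graph of $\Psi$, a subset of $\Baire\times\Baire\cong\Baire$, because the name $\Psi(x)$ of $\psi(x)$ is computed from $x$ and conversely $x$ is read off directly. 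Finally $\KK{k+1}$ is the second projection (or first projection, matching the earlier convention) of $P$, and this projection is computable, so $\KK{k+1}$ is a computable image of $P$.

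The main obstacle I anticipate is item (i): extracting a genuinely \emph{computable} surjection $f:\KK{k}\to\Baire\setminus D_{k+1}$. The fact that $D_{k+1}$ is $\PPi^1_k$ gives, via Theorem \ref{thm_image_kk} applied at level $\alpha=k$, that its complement $\Baire\setminus D_{k+1}\in\SSigma^1_k(\Baire)$ is a continuous image of $\KK{k}$; I must argue this image can be taken computable, appealing to the effective content of Theorem \ref{thm_kk_sigma} (which characterizes $\Sigma^1_k$-sets via a computable predicate $R$ over $\KK{k}$) rather than to the merely topological Theorem \ref{thm_image_kk}. This is precisely the point at which the finite-type restriction and the effective version of Schröder--Simpson's construction are needed, and I would spell out the passage from the computable predicate $R$ to the computable surjection $f$ as the technical heart of the proof, treating the remaining bookkeeping (computability of $\psi$, the $\Pi^0_1$ verification, and the computable homeomorphism) as routine effectivizations of the statements already established in Lemma \ref{lem_kkk_canon}.
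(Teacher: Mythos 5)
Your overall scaffolding (graph of a diagonalizing functional $\psi$, $\Pi^0_1$-ness of the graph checked on a computable dense sequence via Lemma \ref{lem_kk_dense}, computable homeomorphism with the graph of a realizer, first projection giving the domain of the representation) matches the paper, but the step you yourself flag as the technical heart --- item (i), a \emph{computable} surjection $f:\KK{k}\to\Baire\setminus D_{k+1}$ --- is a genuine gap, and the route you propose for filling it does not work. The effective analogue of Theorem \ref{thm_image_kk} (``every lightface $\Sigma^1_k$ subset of $\Baire$ is a computable image of $\KK{k}$'') is false: a computable image of $\Baire$ must be computably overt, i.e.\ the set of cylinders meeting it is c.e.\ (given a computable realizer $G$ of the surjection, $[u]$ meets the image iff the effectively open set $G^{-1}([u])$ is nonempty, which is semidecidable), yet the nonempty $\Pi^0_1$ set $\{0^\N\}\cup\{0^n1^\N:n\notin B\}$, with $B$ c.e.\ and noncomputable, contains a computable point, is lightface $\Sigma^1_1$, and is not computably overt, hence not a computable image of $\Baire=\KK{1}$. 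So the lightface $\Pi^1_k$-ness of $D_{k+1}$ by itself yields no computable surjection onto its complement. The fallback you name, the effective content of Theorem \ref{thm_kk_sigma}, gives strictly less: it shows every $\Sigma^1_k$ set is a computable image of a $\Pi^0_1$ \emph{subset} of (a copy of) $\KK{k}$, not of $\KK{k}$ itself. And if $f$ is only defined on a $\Pi^0_1$ subset $P'\subseteq\KK{k}$, then $\psi(x)(y)=\min\{n:x_n\neq f(y)_n\}$ is only defined for $y\in P'$, so $\psi(x)$ is no longer an element of $\C(\KK{k},\N)=\KK{k+1}$ and the construction collapses; extending it would need an extension theorem for functions on $\Pi^0_1$ subsets of $\KK{k}$, whereas Lemma \ref{lemma_extension} only covers $\Pi^0_1$ subsets of $\Baire$.

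The missing idea is to diagonalize against the normal form of Theorem \ref{thm_kk_sigma} directly rather than against a surjection. Since $D_{k+1}\in\Pi^1_{k}$, that theorem (in dual form) provides a \emph{computable} predicate $R\subseteq\Baire\times\KK{k}\times\N$ with $x\in D_{k+1}\iff\forall y\in\KK{k}\,\exists n\,R(x,y,n)$, and one sets $\psi(x)(y)=\min\{n:R(x,y,n)\}$. Your $\psi$ is exactly the special case $R(x,y,n)\equiv(x_n\neq f(y)_n)$, but the abstract predicate $R$ is available effectively at finite levels while the surjection $f$ is not; this is precisely why the paper's continuous argument (Lemma \ref{lem_kkk_canon}) uses a surjection but its effective counterpart does not. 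With this replacement, the rest of your argument goes through essentially as you wrote it: membership of $(x,H)$ in the graph becomes ``for all $y$, $R(x,y,H(y))$ and $\neg R(x,y,i)$ for $i<H(y)$'', whose failure is semidecidable and detectable on the computable dense sequence. Note also that no induction on $k$ is needed (your induction never invokes its hypothesis), and that only the \emph{first} projection of the graph works: the second projection is the range of $\psi$, which need not be all of $\KK{k+1}$, while the first projection is $D_{k+1}$, whose image under the representation map is $\KK{k+1}$.
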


\begin{proof}
From Theorem \ref{thm_kk_sigma}, as~$D_{k}\in\Pi^1_{k-1}$ there exists a computable predicate~$R\subseteq\Baire\times \KK{k-1}\times\N$ such that~$x\in D_{k}\iff\forall y\in\KK{k-1},\exists n, R(x,y,n)$. We define a computable function define~$\psi:D_{k}\to\KK{k}$ by~$\psi(x)(y)=\min\{n:R(x,y,n)\}$ for~$x\in D_k, y\in\KK{k-1}$. Let~$P$ be the graph of~$\psi$. First,~$P$ is a~$\Pi^0_1$-subset of~$\Baire\times\KK{k}$. Indeed,~$(x,H)$ belongs to the graph if and only if for all~$y\in\KK{k-1}$, if~$n=H(y)$ then~$R(x,y,n)$ and~$\neg R(x,y,i)$ for all~$i<n$, but this can be checked for all~$y$ in a computable dense sequence in~$\KK{k-1}$ (which exists by Lemma \ref{lem_kk_dense}).

As~$\psi$ is computable, its graph is computably isomorphic to the graph of a computable realizer of~$\psi$, which is a subset of~$\Baire\times\Baire\cong\Baire$. Finally,~$\KK{k}$ is a computable image of~$D_k$ which is the first projection of~$P$.
\end{proof}

It may possible to extend the result to the constructive ordinals, however we do not address this question in this article.

\begin{proof}[Proof of Theorem \ref{thm_KKalpha}]
In the first case, as~$\KK{\alpha}$ is canonizable we can apply Theorem \ref{thm_canon}.

In the second case, as~$\KK{k}$ is computably canonizable there is no computable function from~$\KK{k}$ to~$\O(\KK{k}\times\Y)$ containing all the computable open sets in its range, therefore there is no such computable function defined on some~$\Pi^0_1$-subset of~$\KK{\alpha}$. We can conclude as every set in~$\Sigma^1_k(\N)$ is a computable image of some set in~$\Pi^0_1(\KK{\alpha})$, and for every set~$A\in\Delta^1_2(\N)$,~$\{A\}$ is a computable image of some set in~$\Pi^0_1(\KK{\alpha})$.
\end{proof}

\subsection{Applications}
We now apply Theorem \ref{thm_KKalpha} to particular spaces.
\paragraph{Back to the space of clopens.}
We saw in Theorem \ref{thm_CNN} that~$\C(\Baire,2)$ is not~$\Baire$-based, i.e.\ that~$\O(\C(\Baire,2))$ is not a continuous image of~$\Baire$. We can now apply Theorem \ref{thm_KKalpha} to improve that result.
\begin{theorem}\label{thm_CNN2}
The space~$\C(\Baire,2)$ is not~$\NN$-based.

The computable open subsets of~$\C(\Baire,2)$ are not~$\Sigma^1_2$-enumerable and not enumerable relative to any~$\Delta^1_2$ oracle.
\end{theorem}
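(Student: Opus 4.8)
The plan is to deduce everything from Theorem \ref{thm_KKalpha} applied with $\X=\C(\Baire,2)$ and $\alpha=k=2$, and then to transport the resulting impossibility statements from $\NN\times\C(\Baire,2)$ back to $\C(\Baire,2)$ through a retraction. By Lemma \ref{lem_NN_fpf} the space $\O(\C(\Baire,2))$ carries a computable, hence continuous, fixed-point free multifunction, so Theorem \ref{thm_KKalpha} yields that $\NN\times\C(\Baire,2)$ is not $\NN$-based and that its computable open subsets are neither $\Sigma^1_2$-enumerable nor enumerable relative to any $\Delta^1_2$ oracle. It remains to see that these three properties are inherited by $\C(\Baire,2)$ itself.

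The key step, and the main obstacle, is to show that $\NN$ is a computable retract of $\C(\Baire,2)$. Granting this, since $\C(\Baire,2)\cong\C(\Baire,2)\times\C(\Baire,2)$ (the isomorphism already used in the proof of Theorem \ref{thm_CNN}), the product $\NN\times\C(\Baire,2)$ is a computable retract of $\C(\Baire,2)^2\cong\C(\Baire,2)$. By Lemma \ref{lem_retract_open} the retraction lifts to a computable section-retraction $(S,R)$ between $\O(\C(\Baire,2))$ and $\O(\NN\times\C(\Baire,2))$, where $R$ is computable and onto and $S$ is computable.

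This lets me transfer the negative results. For $\NN$-basedness I use the equivalence, recalled just before Theorem \ref{thm_KKalpha}, that a space $\Z$ is $\NN$-based iff $\O(\Z)$ is a continuous image of $\NN$: were $\C(\Baire,2)$ $\NN$-based, composing a continuous surjection $\NN\to\O(\C(\Baire,2))$ with the continuous surjection $R$ would exhibit $\O(\NN\times\C(\Baire,2))$ as a continuous image of $\NN$, contradicting that $\NN\times\C(\Baire,2)$ is not $\NN$-based. For the effective statements, from any computable enumeration $e\colon A\to(\O(\C(\Baire,2)))_c$ with $A$ in $\Sigma^1_2(\N)$, respectively an enumeration computed relative to a $\Delta^1_2$ oracle, I form $R\circ e$; since $S$ is computable, every computable $V\in\O(\NN\times\C(\Baire,2))$ satisfies $V=R(S(V))$ with $S(V)$ computable and hence enumerated by $e$, so $R\circ e$ enumerates all computable open subsets of $\NN\times\C(\Baire,2)$ within the same complexity bound, again a contradiction.

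The remaining work is the key retract lemma, which I would prove in the spirit of Lemma \ref{lem_retract_NN_NN2}. Identifying $\Baire\cong\Baire\times\Baire$ and writing a point as $(p,q)$, I send $F\in\NN=\C(\Baire,\N)$ to the clopen set $s(F)=\{(p,q):q\in[0^{F(p)}1]\}$; membership is decidable from a finite prefix, so $s$ is computable. For the retraction $r\colon\C(\Baire,2)\to\NN$, given a clopen $C$ I recover $r(C)(p)$ by first evaluating the characteristic function of $C$ at $(p,0^\infty)$, which terminates for every $C$ and $p$ precisely because $C$ is clopen. If that point lies outside $C$, the evaluation inspects only a finite prefix $0^m$, and clopenness forces $\{k:(p,0^k10^\infty)\in C\}\subseteq\{0,\dots,m-1\}$, a finite set that can be searched effectively; otherwise I output $0$. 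This makes $r$ total and computable, and for $C=s(F)$ the search returns exactly $F(p)$, so $r\circ s=\id$. I expect this totality-via-clopenness argument to be the only delicate point; everything else is bookkeeping with Lemma \ref{lem_retract_open} and the isomorphism $\C(\Baire,2)\cong\C(\Baire,2)^2$.
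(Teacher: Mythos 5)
Your proof is correct, and it is actually more complete than the paper's own treatment. The paper states Theorem \ref{thm_CNN2} with no proof beyond the remark that one should apply Theorem \ref{thm_KKalpha}; but that theorem, instantiated at $\X=\C(\Baire,2)$ via Lemma \ref{lem_NN_fpf}, only yields the negative statements for the product $\NN\times\C(\Baire,2)$, and transferring them to $\C(\Baire,2)$ itself requires precisely the step you single out: that $\O(\NN\times\C(\Baire,2))$ is a computable image of $\O(\C(\Baire,2))$, which via Lemma \ref{lem_retract_open} and $\C(\Baire,2)\times\C(\Baire,2)\cong\C(\Baire,2)$ amounts to $\NN$ being a computable retract of $\C(\Baire,2)$. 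The paper never states this retraction; it only records the converse one ($\C(\Baire,2)$ is a computable retract of $\NN$), which transfers negative results in the wrong direction, so your key lemma fills a genuine gap rather than reproducing a hidden argument. Your construction of the retraction is sound, and it is exactly the uniform analogue of Lemma \ref{lem_retract_NN_NN2}: the set $s(F)=\{(p,q):q\in[0^{F(p)}1]\}$ is clopen because membership is settled by finitely much of $p$ (enough to determine $F(p)$) together with $F(p)+1$ values of $q$; conversely, if $(p,0^\N)\notin C$ then any realizer of $\cf_C$ commits to its answer after reading some prefix $0^m$ of the second argument, so $\{k:(p,0^k10^\N)\in C\}\subseteq\{0,\dots,m-1\}$, each of the finitely many membership tests halts by clopenness, and outputting the least such $k$ (or $0$ if there is none, or if $(p,0^\N)\in C$) makes $r$ extensional---its value depends only on $(C,p)$ even though $m$ depends on the name---with $r\circ s=\id$ because for $C=s(F)$ that set is the singleton $\{F(p)\}$. (Equivalently, your construction shows that $\N$ is a computable retract of $\C(\Baire,2)$, whence $\NN=\C(\Baire,\N)$ is a computable retract of $\C(\Baire,\C(\Baire,2))\cong\C(\Baire\times\Baire,2)\cong\C(\Baire,2)$.) The remaining transfer steps---composing a hypothetical continuous surjection $\NN\to\O(\C(\Baire,2))$ with the retraction $R$, and replacing an enumeration $e$ by $R\circ e$ using $V=R(S(V))$ with $S(V)$ computable---are the same bookkeeping the paper performs in the proof of Theorem \ref{thm_CNN}, and you carry them out correctly.
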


\paragraph{The Kleene-Kreisel functionals.}

\begin{theorem}\label{thm_kkk}
For every countable ordinal~$\alpha\geq 2$,~$\KK{\alpha}$ is not~$\KK{\alpha}$-based.

For every~$k\geq 2$, the computable open subsets of~$\KK{k}$ are not~$\Sigma^1_k$-enumerable and not enumerable relative to any~$\Delta^1_k$ oracle.
\end{theorem}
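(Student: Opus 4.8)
The plan is to apply Theorem \ref{thm_KKalpha} with $\X=\KK{\alpha}$ (resp.\ $\X=\KK{k}$) and then absorb the spurious factor using a self-product isomorphism. Throughout, write $\Y\triangleleft\X$ for ``$\Y$ is a retract of $\X$'' (continuous or computable, according to context). For the continuous statement it suffices to establish two facts: (a) $\O(\KK{\alpha})$ admits a continuous fixed-point free multifunction, and (b) $\KK{\alpha}\cong\KK{\alpha}\times\KK{\alpha}$. Granting these, Theorem \ref{thm_KKalpha} applied to $\X=\KK{\alpha}$ shows that $\KK{\alpha}\times\KK{\alpha}$ is not $\KK{\alpha}$-based, and (b) transports this to $\KK{\alpha}$ itself. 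For the effective statement I would run the same argument with the computable versions of (a) and (b) and invoke the second part of Theorem \ref{thm_KKalpha}: the computable open subsets of $\KK{k}\times\KK{k}$ are not $\Sigma^1_k$-enumerable nor enumerable relative to a $\Delta^1_k$ oracle, and a computable isomorphism $\KK{k}\cong\KK{k}\times\KK{k}$ transfers both negative results to $\KK{k}$.

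To obtain (b) I would argue by transfinite induction on $\alpha$. For a successor $\gamma+1$ one has $\KK{\gamma+1}\times\KK{\gamma+1}\cong\C(\KK{\gamma},\N)\times\C(\KK{\gamma},\N)\cong\C(\KK{\gamma},\N\times\N)\cong\KK{\gamma+1}$ using $\N\times\N\cong\N$; for a limit $\lambda$, regrouping the two copies of the countable product $\prod_i\KK{\beta(\lambda,i)}$ factorwise and applying the induction hypothesis $\KK{\beta}\cong\KK{\beta}\times\KK{\beta}$ to each factor gives $\KK{\lambda}\times\KK{\lambda}\cong\KK{\lambda}$. All of these isomorphisms are computable, which covers the finite case as well. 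For (a) I would transfer the fixed-point free multifunction of Lemma \ref{lem_NN_fpf} along a retraction, using the observation that the property ``$\O(-)$ admits a continuous (resp.\ computable) fixed-point free multifunction'' passes upward along retracts: if $\Y\triangleleft\X$, with $S:\O(\Y)\to\O(\X)$ and $R:\O(\X)\to\O(\Y)$ the section-retraction from Lemma \ref{lem_retract_open}, and $h:\O(\Y)\tto\O(\Y)$ is fixed-point free, then $S\circ h\circ R:\O(\X)\tto\O(\X)$ is fixed-point free, because a fixed point $W$ would lie in the image of $S$, say $W=S(V')$ with $V'\in h(R(W))$, whence $R(W)=R(S(V'))=V'\in h(R(W))$ makes $R(W)$ a fixed point of $h$. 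Since $\C(\Baire,2)\triangleleft\KK{2}=\NN$, it then suffices to show $\KK{2}\triangleleft\KK{\alpha}$ (computably for finite $k$), and composing retractions with Lemma \ref{lem_NN_fpf} yields (a).

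The main obstacle is therefore proving that $\KK{2}$ is a retract of $\KK{\alpha}$ for $\alpha\ge 2$. For finite $k$ this is a clean induction: $\KK{j}\triangleleft\KK{j+1}$ for every $j\ge 0$, since $\N=\KK{0}\triangleleft\Baire=\KK{1}$ (via $n\mapsto c_n$ and $p\mapsto p(0)$) and Lemma \ref{lem_retract_open} with $\Y'=\X'=\N$ lifts $\KK{j-1}\triangleleft\KK{j}$ to $\C(\KK{j-1},\N)\triangleleft\C(\KK{j},\N)$, i.e.\ $\KK{j}\triangleleft\KK{j+1}$; transitivity gives $\KK{2}\triangleleft\KK{k}$. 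For transfinite $\alpha$ I would first prove, by transfinite induction, the auxiliary fact that $\Baire\triangleleft\KK{\gamma}$ for every countable $\gamma\ge 1$. At a successor $\delta+1$ I embed $\Baire$ into $\C(\KK{\delta},\N)$ by $s(f)=\big(\theta\mapsto f(r_\delta(\theta)(0))\big)$ with retraction $r(\Phi)=\big(n\mapsto\Phi(s_\delta(c_n))\big)$, where $(s_\delta,r_\delta)$ witnesses $\Baire\triangleleft\KK{\delta}$ and $c_n\in\Baire$ is the constant function $n$; one checks $r(s(f))(n)=f(r_\delta(s_\delta(c_n))(0))=f(n)$. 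At a limit, $\Baire$ occurs as a product factor of $\KK{\gamma}$.

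Granting this auxiliary fact, a successor $\alpha=\gamma+1\ge 3$ gives $\KK{2}=\C(\Baire,\N)\triangleleft\C(\KK{\gamma},\N)=\KK{\alpha}$ by Lemma \ref{lem_retract_open}, and at a limit $\lambda>2$ the space $\KK{2}$ is itself a product factor of $\KK{\lambda}$, hence a retract; this completes $\KK{2}\triangleleft\KK{\alpha}$ and therefore (a). I expect the care needed in this last transfinite induction — in particular verifying that the embedding of $\Baire$ is well behaved at successor stages and that the factor-projections at limit stages are genuine continuous retractions — to be the most delicate part of the argument, the point being that one must never embed $\KK{\gamma}$ into $\KK{\gamma+1}$ directly (which is unclear), but only ever the small space $\Baire$.
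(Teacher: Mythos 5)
Your proposal is correct, and its skeleton coincides with the paper's: take the computable fixed-point free multifunction on~$\O(\C(\Baire,2))$ from Lemma \ref{lem_NN_fpf}, push it into~$\O(\KK{\alpha})$ along a retraction (your upward-transfer observation is precisely Lemma \ref{lem_retract_fpf} combined with Lemma \ref{lem_retract_open}), and apply Theorem \ref{thm_KKalpha} with~$\X=\KK{\alpha}$. The two places where you diverge are worth noting. First, the paper does not prove the crucial retraction facts itself: it cites Lemma 7.4 of \cite{SchroderS15a} for the statement that~$\KK{2}$ is a computable retract of each~$\KK{k}$,~$k\geq 2$, and a continuous retract of each~$\KK{\alpha}$,~$\alpha\geq 2$. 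You instead give a self-contained proof: the finite induction showing~$\KK{j}$ is a retract of~$\KK{j+1}$ via Lemma \ref{lem_retract_open}, and, for transfinite~$\alpha$, the auxiliary fact that~$\Baire$ is a retract of every~$\KK{\gamma}$,~$\gamma\geq 1$ (your section--retraction at successor stages checks out:~$r(s(f))(n)=f(r_\delta(s_\delta(c_n))(0))=f(n)$), lifted into~$\KK{\gamma+1}$ by Lemma \ref{lem_retract_open} and handled at limits by factor projections. Your guiding principle --- only ever embed the small space~$\Baire$, never~$\KK{\gamma}$ into~$\KK{\gamma+1}$ --- is exactly what makes this elementary induction go through, and it replaces the external citation. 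Second, you make explicit the product-absorption step~$\KK{\alpha}\cong\KK{\alpha}\times\KK{\alpha}$ needed to pass from Theorem \ref{thm_KKalpha}'s conclusion about~$\KK{\alpha}\times\KK{\alpha}$ to the claim about~$\KK{\alpha}$ itself; the paper leaves this step implicit in the proof of Theorem \ref{thm_kkk} (the analogous absorption is spelled out only for~$\C(\Baire,2)$ in the proof of Theorem \ref{thm_CNN}, where~$\X\times\X\cong\X$ is invoked). In short: the paper's route is shorter because it outsources the key retraction to \cite{SchroderS15a}; yours is longer but self-contained, and it makes the computable case at the finite levels completely transparent.
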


We just have to prove that~$\O(\KK{\alpha})$ has continuous fixed-point free multifunction and~$\O(\KK{k})$ has a computable one. They can be easily derived from the computable fixed-point free multifunction on~$\O(\C(\Baire,2))$ (Lemma \ref{lem_NN_fpf}) thanks to the following observation.

\begin{lemma}\label{lem_retract_fpf}
Let~$\X$ be a continuous (resp.\ computable) retract of~$\Y$. If~$\X$ admits a continuous (resp.\ computable) fixed-point free multifunction then so does~$\Y$.
\end{lemma}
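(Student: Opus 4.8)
The plan is to transport the fixed-point free multifunction along the section-retraction witnessing that~$\X$ is a retract of~$\Y$. Let~$(s,r)$ be a section-retraction with~$s:\X\to\Y$,~$r:\Y\to\X$ continuous (resp.\ computable) and~$r\circ s=\id_\X$, and let~$h:\X\tto\X$ be the given continuous (resp.\ computable) fixed-point free multifunction. I would define~$H:\Y\tto\Y$ by~$H:=s\circ h\circ r$, that is,~$H(y)=\{s(z):z\in h(r(y))\}$.

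First I would check that~$H$ is a continuous (resp.\ computable) multifunction. This is just composition: if~$R$ realizes~$r$,~$G$ realizes~$h$ (so~$\delta_X(G(q))\in h(\delta_X(q))$ for every name~$q$), and~$S$ realizes~$s$, then~$S\circ G\circ R$ maps a name~$p$ of~$y$ to a name of some~$s(z)$ with~$z\in h(r(y))$, hence realizes~$H$. All three realizers are continuous (resp.\ computable), so their composite is as well.

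The heart of the argument is fixed-point freeness, and this is where the retraction identity does the work. Suppose toward a contradiction that~$y\in H(y)$ for some~$y\in\Y$. Then~$y=s(z)$ for some~$z\in h(r(y))$. Applying~$r$ and using~$r\circ s=\id_\X$ gives~$r(y)=r(s(z))=z$, so~$z=r(y)$. Substituting back into~$z\in h(r(y))$ yields~$r(y)\in h(r(y))$, i.e.\ a fixed-point of~$h$, contradicting that~$h$ is fixed-point free. Hence~$H$ has no fixed-point.

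I do not anticipate a genuine obstacle here: the only point requiring care is that the equality~$y=s(z)$ lets us recover~$z$ exactly as~$r(y)$ through the section-retraction identity, which is precisely what turns a hypothetical fixed-point of~$H$ into one of~$h$. The computable and continuous cases are handled uniformly, since the construction and the realizer are the same in both, differing only in whether~$s$,~$r$ are continuous or computable.
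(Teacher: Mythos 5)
Your proof is correct and is essentially identical to the paper's: both transport~$h$ to~$\Y$ as the multifunction~$s\circ h\circ r$, realize it by composing the three realizers, and use~$r\circ s=\id_\X$ to convert a hypothetical fixed-point~$y\in H(y)$ into the fixed-point~$r(y)\in h(r(y))$ of~$h$. In fact your write-up is slightly cleaner than the paper's, which contains a typo ($r(y)=r\circ s(x)=s$ where it means~$=x$) at exactly the step you spell out carefully.
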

\begin{proof}
Let~$r:\Y\to\X$ and~$s:\X\to\Y$ be continuous such that~$r\circ s =\id$, and let~$R,S$ be continuous realizers. Let~$h:\X\tto\X$ be fixed-point free and~$H$ be a continuous realizer. The multifunction~$k=s\circ h\circ r$ (defined by~$k(y)=\{s(x):x\in h(r(y)\}$) is continuous as it is realized by the continuous function~$S\circ H\circ R$. Moreover it is fixed-point free, as if~$y\in k(y)$ then~$y=s(x)$ for some~$x\in h(r(y))$, so~$r(y)$ is a fixed-point for~$h$, as~$r(y)=r\circ s(x)=s\in h(r(y))$, contradicting the assumption about~$h$. Everything works the same in the computable case.
\end{proof}

Theorem \ref{thm_kkk} follows as~$\C(\Baire,2)$ is a computable retract of~$\KK{2}$ which is a computable retract of each~$\KK{k}$ (Lemma 7.4 in \cite{SchroderS15a}),~$k\geq 2$, and a continuous retract of each~$\KK{\alpha}$,~$\alpha\geq 2$.

\paragraph{Implications on the hyperprojective base-hierarchy.}

In \cite{deBSS16} the hierachy~$\Based(\KK{\alpha})$ is introduced: a space is in~$\Based(\KK{\alpha})$ if it is~$\KK{\alpha}$-based. That hierarchy is related to another hierarchy called~$\QCB{\alpha}$. The following results are proved in \cite{deBSS16}:
\begin{itemize}
\item Proposition 2.9: $\KK{\alpha+1}\in\QCB{\alpha}$,
\item Proposition 6.11: $\QCB{\alpha}\subseteq \Based(\KK{\alpha+2})$,
\item Corollary 6.12: The inclusion~$\Based(\KK{\alpha})\subset\Based(\KK{\alpha+3})$ is proper.
\end{itemize}

It is asked:
\begin{itemize}
\item Problem 6.2: for which~$\alpha$ does the inclusions $\QCB{\alpha}\subseteq \Based(\KK{\alpha+1})$ and~$\QCB{\alpha}\subseteq \Based(\KK{\alpha})$ hold?
\item Is the inclusion~$\Based(\KK{\alpha})\subset\Based(\KK{\alpha+1})$ proper?
\end{itemize}

Theorem \ref{thm_kkk} gives answers to these questions:
\begin{corollary}
\begin{enumerate}
\item For all countable ordinals~$\alpha\geq 1$,~$\QCB{\alpha}\nsubseteq \Based(\KK{\alpha+1})$.
\item For all countable ordinals~$\alpha$, the inclusion~$\Based(\KK{\alpha})\subset\Based(\KK{\alpha+1})$ is proper (witnessed by~$\KK{\alpha}$),
\end{enumerate}
\end{corollary}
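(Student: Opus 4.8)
The plan is to read both items off Theorem \ref{thm_kkk} together with the two facts quoted from \cite{deBSS16}, namely Proposition 2.9 ($\KK{\alpha+1}\in\QCB{\alpha}$) and Proposition 6.11 ($\QCB{\alpha}\subseteq\Based(\KK{\alpha+2})$). The guiding idea is that Theorem \ref{thm_kkk} supplies, for each $\beta\geq 2$, a concrete space (namely $\KK{\beta}$ itself) sitting \emph{outside} $\Based(\KK{\beta})$, while the cited propositions locate that same space \emph{inside} the neighbouring classes $\QCB{\beta-1}$ and $\Based(\KK{\beta+1})$. The gap between ``outside $\Based(\KK{\beta})$'' and ``inside $\Based(\KK{\beta+1})$'' is exactly what separates consecutive levels.

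For the first item I would fix $\alpha\geq 1$ and take the witness $\KK{\alpha+1}$. By Proposition 2.9 it lies in $\QCB{\alpha}$. Since $\alpha+1\geq 2$, Theorem \ref{thm_kkk} applies and shows that $\KK{\alpha+1}$ is not $\KK{\alpha+1}$-based, i.e.\ $\KK{\alpha+1}\notin\Based(\KK{\alpha+1})$. A single space belonging to $\QCB{\alpha}$ but not to $\Based(\KK{\alpha+1})$ already certifies $\QCB{\alpha}\nsubseteq\Based(\KK{\alpha+1})$, which is the claim.

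For the second item the inclusion $\Based(\KK{\alpha})\subseteq\Based(\KK{\alpha+1})$ is the (known) monotonicity of the base-hierarchy, so only properness is at issue. In the main range $\alpha\geq 2$ the witness is again $\KK{\alpha}$: applying Proposition 2.9 at index $\alpha$ gives $\KK{\alpha}\in\QCB{\alpha-1}$, and Proposition 6.11 at index $\alpha-1$ gives $\QCB{\alpha-1}\subseteq\Based(\KK{\alpha+1})$, so $\KK{\alpha}\in\Based(\KK{\alpha+1})$; meanwhile Theorem \ref{thm_kkk} gives $\KK{\alpha}\notin\Based(\KK{\alpha})$. Hence $\KK{\alpha}$ witnesses the strictness $\Based(\KK{\alpha})\subsetneq\Based(\KK{\alpha+1})$.

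The remaining obstacle, and the only delicate point, is the two boundary indices $\alpha=0$ and $\alpha=1$, which escape Theorem \ref{thm_kkk} (valid only for $\alpha\geq 2$). For $\alpha=0$ I would argue directly: $\KK{0}=\N$ is countably based, hence $\N\in\Based(\KK{1})$, whereas $\O(\N)$ is uncountable and so cannot be a continuous image of the countable discrete space $\N=\KK{0}$, giving $\N\notin\Based(\KK{0})$. For $\alpha=1$ the space $\KK{1}=\Baire$ \emph{is} $\Baire$-based, so a different witness is needed; here I would invoke the product space $\Baire\times\O(\Baire)$, shown in Section \ref{sec_product} to be $\NN$-based but not $\Baire$-based, which lies in $\Based(\KK{2})\setminus\Based(\KK{1})$ and thus establishes properness at $\alpha=1$. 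Everything else is a direct substitution into the cited results, so I expect no difficulty beyond correctly tracking the ordinal indices.
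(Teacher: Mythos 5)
Your item 1 and your item 2 for $\alpha\geq1$ coincide with the paper's own proof: the witnesses are exactly $\KK{\alpha+1}$ (via Proposition 2.9 of \cite{deBSS16} and Theorem \ref{thm_kkk}), $\KK{\alpha}$ for $\alpha\geq 2$, and $\Baire\times\O(\Baire)$ for $\alpha=1$. The genuine problem is your $\alpha=0$ case, which is wrong. You claim $\N\notin\Based(\KK{0})$ on the grounds that $\O(\N)$ is uncountable and hence not a continuous image of $\N$. This conflates ``$\X$ is $\KK{0}$-based'' with ``$\O(\X)$ is a continuous image of $\KK{0}$''. The paper states this equivalence only for $\alpha\geq 1$ (Lemma \ref{lem_baire_based} and the observation preceding Theorem \ref{thm_KKalpha}), and its proof genuinely needs $\alpha\geq1$: one upgrades a continuous map onto a \emph{basis} to a continuous surjection onto $\O(\X)$ by taking countable unions, i.e.\ by using $\C(\N,\KK{\alpha})\cong\KK{\alpha}$, and this identification fails at level $0$ since $\C(\N,\N)\cong\Baire\neq\N$. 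Being $\N$-based only asks for a continuously indexed countable family of open sets forming a basis, and the discrete space $\N$ plainly has one (the singletons $n\mapsto\{n\}$); so $\N\in\Based(\KK{0})$ and it cannot witness properness at level $0$. The paper's witness for $\alpha=0$ is $\O(\Baire)$: it lies in $\Based(\KK{1})$ because $\O(\O(\Baire))$ admits a total representation $\delta:\Baire\to\O(\O(\Baire))$ equivalent to the canonical one (Section \ref{sec_product}), and it is not countably based, since otherwise $\Baire\times\O(\Baire)$ would carry the product topology and the evaluation map $\Baire\times\O(\Baire)\to\sierp$ would be continuous for it, contradicting Proposition \ref{prop_not_borel} (the set $E$ there is not even Borel for the product topology).

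A secondary, smaller point: in item 2 you justify the membership $\KK{\alpha}\in\Based(\KK{\alpha+1})$ by applying Proposition 2.9 of \cite{deBSS16} ``at index $\alpha-1$'', which only makes sense when $\alpha$ is a successor ordinal; for limit $\alpha$ the membership half of the argument needs a separate justification. The paper's proof is silent on this point as well (it cites only Theorem \ref{thm_kkk}, which gives the non-membership $\KK{\alpha}\notin\Based(\KK{\alpha})$), so this is a gap you share with the paper rather than one you introduced, but it is worth flagging if you want the argument to cover all countable ordinals.
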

\begin{proof}
\begin{enumerate}
\item The first assertion is witnessed by~$\KK{\alpha+1}$ (Theorem \ref{thm_kkk}).
\item The second assertion is witnessed by~$\KK{\alpha}$ for~$\alpha\geq 2$ (Theorem \ref{thm_kkk}), by~$\Baire\times\O(\Baire)$ for~$\alpha=1$ (see Section \ref{sec_product}) and by~$\O(\Baire)$ for~$\alpha=0$.\qedhere
\end{enumerate}
\end{proof}

\subsection{Markov computability}\label{sec_markov}
Let~$\X_M$ be the set of computable elements of~$\X$ with the Markov representation mapping an index of a computable name of~$x\in\X$ to~$x$ (an index~$i\in\N$ can be identified with the constant function with value~$i$, so that the domain of the representation is a subset of~$\Baire$). Note that the identity from~$\X_M$ to~$\X_c$ is computable, but its inverse is not computable in general.

While the spaces~$\Baire$ and~$\Baire_c$ have the canonicity property,~$\Baire_M$ does not: it is not possible to associate to each computable function a canonical index that could be uniformly computed from any other index.

However,

\begin{lemma}
$\Baire_M$ is computably canonizable.
\end{lemma}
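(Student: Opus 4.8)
The plan is to exhibit a $\Pi^0_1$-subset $P$ of $\Baire_M$ consisting of functions that carry, in a decidably verifiable way, a Markov index of themselves, so that a canonical index can simply be read off. For an index $e$ with $\varphi_e$ total, let $f_e\in\Baire$ be defined by $f_e(0)=e$ and $f_e(n+1)=\langle\varphi_e(n),t_n\rangle$, where $t_n$ is the exact number of steps in the computation $\varphi_e(n)$. I would set $P=\{f_e:\varphi_e\text{ total}\}\subseteq\Baire_M$; equivalently, $f\in P$ iff for every $n$ the computation $\varphi_{f(0)}(n)$ halts in exactly $\pi_2(f(n+1))$ steps with output $\pi_1(f(n+1))$.

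First I would check that $P$ is a $\Pi^0_1$-subset of $\Baire_M$. This is the main obstacle, since $\dom(\delta_M)$ is only $\Pi^0_2$, so membership must be recast as a genuine $\Pi^0_1$ condition on names. Reading a name of $f$ as an index $i$ (with $\varphi_i$ total on the domain), the condition $\varphi_i\in P$ unfolds to: for all $n$, the computation $\varphi_{\varphi_i(0)}(n)$ halts in exactly $\pi_2(\varphi_i(n+1))$ steps with output $\pi_1(\varphi_i(n+1))$. Once $\varphi_i(0)$ and $\varphi_i(n+1)$ are available, each conjunct is decidable, so its negation is c.e.; crucially, ``halts in exactly $s$ steps'' is decidable even when $\varphi_{\varphi_i(0)}$ is nowhere guaranteed total, which is precisely what lets me avoid any $\Pi^0_2$ appeal to its totality. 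Hence there is a $\Pi^0_1$ set $C\subseteq\Baire$ with $C\cap\dom(\delta_M)=\delta_M^{-1}(P)$. As a closed subset, $P$ then carries the subspace topology.

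Next I would produce the canonical names witnessing that $P$ is computably homeomorphic to a subspace of $\Baire$. Given $f\in P$, its value $f(0)$ is an index for $\lambda n.\pi_1(f(n+1))$, and since $f$ is the unique padded function $f_{f(0)}$, the s-m-n theorem yields, computably from $f(0)$, a canonical index $\mathrm{can}(f(0))$ with $\varphi_{\mathrm{can}(f(0))}=f$. The map $f\mapsto\mathrm{can}(f(0))$ is computable from any name of $f$, injective on $P$ (distinct elements yield distinct indices for distinct functions), and its inverse is the Markov representation restricted to the image. This is a computable homeomorphism of $P$ onto $\{\mathrm{can}(f(0)):f\in P\}\subseteq\Baire$.

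Finally I would exhibit a computable surjection $P\to\Baire_M$, realizing $\Baire_M$ as a computable image of $P$: send $f$ to $\varphi_{f(0)}=\lambda n.\pi_1(f(n+1))$, for which $f(0)$ is directly an index. For surjectivity, any $g\in\Baire_M$ with total index $e$ is the image of $f_e\in P$, since $\varphi_{f_e(0)}=\varphi_e=g$. All three maps are plainly computable, so the three clauses of computable canonizability hold, completing the argument.
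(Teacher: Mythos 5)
Your proof is correct and takes essentially the same approach as the paper: in both, $P$ consists of computable functions padded with their own index and exact halting times, so that non-membership is semi-decidable (``halts in exactly $s$ steps'' being decidable), a canonical index is computable from $f(0)$ alone, and $\Baire_M$ is the computable image of $P$ under $f\mapsto\varphi_{f(0)}$. The only cosmetic differences are that you also pack the outputs $\varphi_e(n)$ into the padding (redundant, since they are recoverable from the halting times) and that you realize the third condition via the set of canonical indices rather than via $P$ itself viewed as a subset of $\Baire$.
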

\begin{proof}
Let~$(\varphi_i)_{i\in\N}$ be a canonical enumeration of the partial computable functions from~$\N$ to~$\N$. Let~$\Tot=\{i:\varphi_i\text{ is total}\}$. If~$i\in\Tot$, then let~$t_i:\N\to\N$ map~$n$ to the halting time of~$\varphi_i(n)$.

Let~$P=\{(i,t):\varphi_i\text{ is total and }t=t_i\}$, where~$(i,t)$ is understood as the function~$f$ mapping~$0$ to~$i$ and~$n+1$ to~$t(n)$. $P$ is a~$\Pi^0_1$-subset of~$\Baire_M$ (and even of~$\Baire_c$). Indeed, given~$f$, let~$i=f(0)$: one has~$f\notin P\iff\exists n,\varphi_i(n)$ does not halt in exactly~$f(n)$ steps, which can be eventually discovered.

On~$P$, the standard representation and the Markov representation are computably equivalent: given~$f\in P$, one can compute an index of~$f$, as one can compute an index of~$t_i$ where~$i:=f(0)$, so~$P\subseteq\X_M$ is computably isomorphic to~$P\subseteq\Baire$.

Finally, $\Baire_M$ is a computable image of the first projection of~$P$.
\end{proof}

\begin{lemma}\label{lem_M_fpf}
$\O(\Baire_M)$ admits a computable fixed-point free multifunction.
\end{lemma}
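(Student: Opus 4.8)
I would read a name of $\U\in\O(\Baire_M)$ as what it is: a realizer of the characteristic map $\Baire_M\to\sierp$, i.e.\ a procedure $P_\U$ that, given any index $i$ of a \emph{total} computable function, semi-decides whether $\varphi_i\in\U$. My task is to compute from $P_\U$ a name of some $\V\neq\U$, and the natural outputs are co-singletons: for any computable $m$ the set $\Baire_M\setminus\{m\}$ has the computable name ``semi-decide $\varphi_i\neq m$'' (find a coordinate of disagreement). The point of these outputs is that if I can ever pin down a genuine member $m\in\U$, then emitting $\Baire_M\setminus\{m\}$ wins immediately, since $m\in\U\setminus\V$. Note also the crude asymmetry that shapes the whole problem: for every $\U\neq\Baire_M$ the constant output $\Baire_M$ already differs from $\U$, while only for $\U=\Baire_M$ must I switch to a proper subset; thus the genuinely hard instances are $\Baire_M$ itself and the co-singletons, which are exactly the ones indistinguishable by one-sided semi-decision.

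The plan is therefore to \emph{search for a member of $\U$}, in the spirit of the dodge in the proof of Lemma~\ref{lemma_OOBaire} and of the ``escape with no top'' of the $[0,1)_<$ example. I would fix a computable sequence $(a_k)_k$ of \emph{manifestly} total functions that is dense in $\Baire$ (for instance the eventually-zero functions) and dovetail the computations $P_\U(\text{ind}(a_k))$. If some $a_k$ is confirmed in $\U$, I output $\Baire_M\setminus\{a_k\}$. If none is ever confirmed, then $\U$ contains no $a_k$; since the $a_k$ are infinitely many and distinct, $\U$ is then neither full nor a co-singleton at $a_0$, so outputting $\Baire_M\setminus\{a_0\}$ differs from $\U$ at some $a_k$ with $k\neq 0$ (which lies in $\V$ but not in $\U$). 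At the level of \emph{sets} this already defeats every $\U$: the dangerous near-full regime, where $\U=\Baire_M\setminus\{m^*\}$, forces $\U$ to contain all but one function, so the dovetailed search over genuinely total $a_k$ \emph{must} succeed and hands me a member $a_k\neq m^*$.

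The hard part, and what I expect to be the main obstacle, is packaging these two branches into a \emph{single monotone} realizer. Open-set names are inherently one-sided: I may confirm memberships but never non-memberships, and I can never retract a point once it has been admitted to $\V$. A naive ``default to $\Baire_M\setminus\{a_0\}$, then exclude the found member'' fails, because the baseline has already accepted that member (it is $\neq a_0$), and switching the excluded point after the fact is impossible; dually, ``delay until a member is found'' collapses $\V$ to $\emptyset$ and loses against $\U=\emptyset$. Worse, an adversary chooses the name and can, by its own fixed point, try to make $\U$ coincide with whatever co-singleton I commit to. My intended remedy is to define the excluded point $g$ by the recursion theorem, as a total function that tracks the search and dodges onto the eventual member in such a way that the realizer never accepts $g$ prematurely, interleaving this self-referential definition with the dovetailing so that the emitted set is simultaneously monotone, computable in $P_\U$, and provably unequal to $\U$ on \emph{every} name. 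This is precisely the step that uses the undecidability of totality ($\Tot\notin\Sigma^0_1$), the feature distinguishing $\Baire_M$ from the canonical $\Baire$ and from $\Baire_c$ (whose open sets are countably generated and admit no fixed-point-free multifunction); getting the bookkeeping of the dodge exactly right is the crux.
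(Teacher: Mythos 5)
Your reading of the names, your two set-level branches, and your diagnosis that the entire difficulty lies in packaging them into a single monotone realizer are all correct, and up to that point you are following the same strategy as the paper. The gap is the proposed packaging. First, the recursion theorem cannot be invoked \emph{inside} the realizer: the realizer receives the name of $\U$ only as an oracle, never as a program, so a function $g$ that ``tracks the search'' has no index the construction could quote, and its graph depends on the oracle. In Kreisel--Lacombe--Shoenfield-style arguments the self-referential function works because it simulates the enumeration $W$ from an \emph{index} of $W$; such an index exists only when the name is c.e., so this tool can be used per-input in a correctness proof for computable names, but never uniformly inside the algorithm. Second, even granting some dodge, a $g$ that ``dodges onto the eventual member'' $a_{k_0}$ is not equal to $a_{k_0}$: to stay total in case the search never succeeds, $g$ must keep committing values, so by the time a member is found, $g$ has an initial segment that $a_{k_0}$ need not match. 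Then $a_{k_0}\in\V=\Baire_M\setminus\{g\}$ after all, and $g$ itself need not lie in $\U$, so neither point witnesses $\V\neq\U$. Repairing this forces you to search only for members vanishing on the committed segment, and you then face a race between keeping $g$ total and the unbounded time the search may take; your proposal does not resolve that race.

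The paper resolves it with a device involving no self-reference in the algorithm at all, and with a different output family than your co-singletons. Its default is to enumerate \emph{every} index (coding all of $\Baire_M$) while testing $f_\infty\in\U$, where $f_\infty$ is the zero function and $f_k$ is zero except $f_k(k)=1$. If the test succeeds, only indices $<n$ have been committed; it then hunts for $n+1$ values of $k$ with $f_k\in\U$, picks $K$ beyond all of them, and completes the output to $\bigcup_{l<n}[\varphi_l\res{K}]$. This one device solves both of your problems simultaneously: the cylinders make the committed part extensional no matter what it was (so the output is a legitimate name), and by pigeonhole one of the $n+1$ found functions $f_k$, pairwise distinct below $K$, escapes all $n$ cylinders, hence lies in $\U\setminus\V$. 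Co-singleton outputs admit no such completion, which is exactly what pushes you toward self-reference. Note finally that the fact making the hunt terminate --- if $f_\infty\in\U$ then infinitely many $f_k\in\U$ --- is itself a KLS-type statement whose proof uses the recursion theorem applied to a c.e.\ name of $\U$; it fails for arbitrary names (since $\Baire_M$ is discrete, even $\{f_\infty\}$ is an element of $\O(\Baire_M)$, and on any of its names the hunt runs forever). So your stated goal of being ``provably unequal to $\U$ on every name'' is overambitious; what the construction delivers, and what the application in Corollary \ref{cor_markov} actually consumes, is correctness on computable names.
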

\begin{proof}
One can think of a name of an element~$\W$ of~$\O(\Baire_M)$ as an enumeration of a set~$W\subseteq \N$ containing the indices of the computable functions in~$\W$ (and possibly indices of partial functions). Given an enumeration of a set of indices~$W$, we enumerate another set~$W'$ coding some~$\W'\in\O(\Baire_M)$ such that~$\W'\neq\W$.

Let~$f_\infty$ be the null function, and~$f_k(n)=0$ if~$n\neq k$,~$f_k(k)=1$.

Given an enumeration of~$W$, we enumerate~$W'$ as follows. Start enumerating all the natural numbers. In parallel, test whether~$f_\infty\in \W$. If it eventually happens, let~$n$ be larger than all the numbers enumerated so far in~$W'$. Look for~$n+1$ values of~$k$ such that~$f_k\in\W$, and let~$K$ be larger than all of them. Now enumerate in~$W'$ all the numbers~$j$ such that~$\varphi_j$ coincides with some~$\varphi_l$,~$l<n$, on inputs~$0,1,\ldots,K$.

Now we check that the construction fulfills the conditions.
\begin{itemize}
\item If~$f_\infty\notin \W$ then~$W'=\N$ and~$\W'=\Bairec$, so~$f_\infty\in \W'\setminus\W$.
\item If~$f_\infty\in \W$ then there exist infinitely many~$k$'s such that~$f_k\in\W$, so the algorithm will eventually find~$K$, and~$\W'=\bigcup_{l<n}[\varphi_l\res{K}]$ (with the convention that $[\varphi_l\res{K}]=\emptyset$ if~$\varphi_l$ is not defined up to~$K$). As there are at least~$n+2$ values of~$k< K$ such that~$f_k\in\W$, and all these functions differ on some input~$<K$, one of them does not belong to~$\W'$ which a union of~$n$ cylinders of length~$K$. As result,~$f_k\in \W\setminus\W'$.\qedhere
\end{itemize}
\end{proof}

The computable elements of~$\O(\Baire_M)$ are also called the Markov semidecidable properties. They correspond to the c.e.\ subsets of~$\N$ that are extensional w.r.t.\ the indices of total computable functions (for each total function, an extensional set contains all its indices or none of them). With the tools we have developed, we can resolve a question left open in \cite{ICALP16}.
\begin{corollary}\label{cor_markov}
The computable open subsets of~$\Baire_M$ are not~$\Sigma^0_3$-enumerable.
\end{corollary}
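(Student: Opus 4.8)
The plan is to combine the two preceding lemmas of this section with the effective machinery of Theorem~\ref{thm_KKalpha} (equivalently, the computable version of Theorem~\ref{thm_canon}), specialized to the represented space~$\Baire_M$. The key inputs are already in place: Lemma~\ref{lem_M_fpf} gives a computable fixed-point free multifunction on~$\O(\Baire_M)$, and the lemma immediately preceding it shows that~$\Baire_M$ is computably canonizable via the~$\Pi^0_1$-set~$P=\{(i,t):\varphi_i\text{ is total and }t=t_i\}$. So~$\Baire_M$ plays here the role that~$\KK{k}$ played in Theorem~\ref{thm_kkk}, and I want to run the same argument with~$\X=\Y=\Baire_M$.

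First I would invoke the computable version of Theorem~\ref{thm_canon}: since~$\Baire_M$ is computably canonizable and~$\O(\Baire_M)$ admits a computable fixed-point free multifunction, there is no computable function from~$\Baire_M$ to~$\O(\Baire_M\times\Baire_M)$ whose range contains all the computable open sets. (Here one uses, exactly as in the proof of Theorem~\ref{thm_canon}, that the witnessing~$P$ is~$\Pi^0_1$, that~$\Baire_M$ is a computable image of~$P$, and that~$P$ is computably isomorphic to a subspace~$Q\subseteq\Baire$ carrying the canonicity property, so that Proposition~\ref{prop_computable} applies on~$Q$ and transfers back.) I would then note that this negative statement in fact holds for any computable function defined merely on a~$\Pi^0_1$-subset of~$\Baire_M$: the computable separability of~$\dom(\delta_{\O(\Baire_M)})$ lets us extend such a partial map to a total computable one via Lemma~\ref{lemma_extension}, preserving the property that all computable open sets lie in the range.

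The final step is the complexity bookkeeping that produces the bound~$\Sigma^0_3$ rather than~$\Sigma^1_k$. An enumeration of the computable open subsets of~$\Baire_M$ from some~$\Sigma^0_3$-subset~$A\subseteq\N$ would be a computable surjection from~$A$ onto those sets. Since~$P$ is a~$\Pi^0_1$-subset of~$\Baire_M$ whose first projection maps computably onto~$\Baire_M$, and since~$\Sigma^0_3(\N)$ is captured by computable images of~$\Pi^0_1$-subsets of~$\Baire_M$ at this level of the arithmetical hierarchy (one reads a~$\Sigma^0_3$-index off the total-function data encoded in~$P$), such an enumeration would yield a computable map from a~$\Pi^0_1$-subset of~$\Baire_M$ onto the computable open sets, contradicting the previous paragraph. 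The main obstacle I expect is precisely this last reduction: one must check that the particular~$\Pi^0_1$-set~$P$ witnessing computable canonicity of~$\Baire_M$ is rich enough to realize every~$\Sigma^0_3$-set of~$\N$ as a computable image, and that this is the exact level forced by the~$\Pi^0_1$-over-$\Baire_M$ structure (so that~$\Sigma^0_3$ is sharp and not improvable to a higher level by this method). Everything else is a direct transcription of the canonizable-space argument already carried out for the Kleene--Kreisel functionals.
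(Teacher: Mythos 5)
Your proposal is correct, and it is worth noting that it does \emph{not} follow the paper's own two-line proof of this corollary, but rather transposes the proof of Theorem~\ref{thm_KKalpha} (the $\KK{k}$ case) to~$\Baire_M$. The paper argues: $\O(\Baire_M\times\Baire_M)\cong\O(\Baire_M)$, so by the canonizability machinery $\O(\Baire_M)$ is not a computable image of~$\Baire_M$, and then asserts that ``the computable images of~$\Baire_M$ are exactly the~$\Sigma^0_3$-sets,'' composing at the level of total maps out of~$\Baire_M$. You instead extend the negative result to computable maps defined on~$\Pi^0_1$-subsets of~$\Baire_M$ (via Lemma~\ref{lemma_extension} and computable separability of~$\dom(\delta_{\O(\Baire_M)})$), and then reduce a~$\Sigma^0_3$-enumeration to such a map, using the fact that every~$\Sigma^0_3$-subset of~$\N$ is a computable image of a~$\Pi^0_1$-subset of~$\Baire_M$ built from halting-time witnesses, exactly as in the set~$P$ of the canonizability lemma. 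Your route is longer but is actually the safer one: read literally, the paper's claim about images of~$\Baire_M$ is problematic, since by the Kreisel--Lacombe--Shoenfield theorem a total \emph{extensional} computable map~$\Baire_M\to\N$ is continuous on~$\Baire_c$, whence its range equals~$\{F(\sigma 0^\omega):\sigma\in\N^{<\omega}\}$ and is therefore c.e., not an arbitrary~$\Sigma^0_3$-set. What produces~$\Sigma^0_3$ is enumeration from the \emph{name space} (indices in~$\Tot$, no extensionality), equivalently from~$\Pi^0_1$-subsets of~$\Baire_M$ on which indices can be canonized --- which is precisely the reduction you spell out. Your concern about sharpness is unnecessary: only the inclusion~$\Sigma^0_3\subseteq\{\text{computable images of }\Pi^0_1\text{-subsets of }\Baire_M\}$ is used (sharpness is only relevant to the separate positive remark on~$\Pi^0_3$-enumerability).

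Two small points to make explicit when writing this up. First, state the isomorphism~$\O(\Baire_M\times\Baire_M)\cong\O(\Baire_M)$ (from~$\Baire_M\times\Baire_M\cong\Baire_M$), since your negative result concerns~$\O(\Baire_M\times\Baire_M)$ while the enumeration concerns~$\O(\Baire_M)$. Second, the extension of a map off its~$\Pi^0_1$-domain~$C$ is a priori only a \emph{multifunction} (different names of a point outside~$C$ may yield different outputs); this is harmless because the extension remains single-valued on~$C$, where all computable open sets are already attained, and the final diagonalization takes place on a subspace~$Q\subseteq\Baire$ whose points have unique names --- the same gloss the paper makes silently in the proof of Theorem~\ref{thm_KKalpha}.
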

\begin{proof}
One has~$\O(\Baire_M\times\Baire_M)\cong\O(\Baire_M)$ so~$\O(\Baire_M)$ is not a computable image of~$\Baire_M$. The computable images of~$\Baire_M$ are exactly the~$\Sigma^0_3$-sets.
\end{proof}

Finally, the computable open subsets of~$\Baire_M$ are~$\Pi^0_3$-enumerable, simply because the domain of the representation of~$\O(\Baire_M)$ is~$\Pi^0_3$ (said differently, for a c.e.\ set, being extensional is a~$\Pi^0_3$-property).

\section{Discussion and results}
The main results presented in this article are:
\begin{itemize}
\item There exists of a~$\Sigma^0_2$-subset of~$\O(\Baire)$ that is not Borel (Theorem \ref{thm_sigma2}),
\item For each countable ordinal~$\alpha\geq 2$, there is no continuous surjection from~$\KK{\alpha}$ to~$\O(\KK{\alpha})$ or, equivalently, there is no~$\KK{\alpha}$-universal open set for~$\KK{\alpha}$ (Theorem \ref{thm_kkk}),
\item For~$k\in\N$,~$k\geq 2$, there is no computable enumeration of the computable open subsets of~$\KK{k}$ from a~$\Sigma^1_k$-subset of~$\N$, or relative to a~$\Delta^1_k$ oracle (Theorem \ref{thm_kkk}),
\item There is no computable enumeration of the Markov semidecidable subsets of~$\Baire$ from any~$\Sigma^0_3$-subset of~$\N$  (Corollary \ref{cor_markov}).
\end{itemize}

These results are obtained by building fixed-point free multi-valued functions and applying the diagonal argument. In several cases one has to be careful about extensionality issues, and we introduce the notion of a canonizable space (Definition \ref{def_canon}) to overcome this problem. These techniques might be useful for other investigations.

These results are negative results that help locating the complexity of describing sets in some pointclasses. An interesting direction would be to obtain positive answers to questions like: What do the~$\SSigma^0_2$-subsets and~$\Sigma^0_2$-subsets of~$\O(\Baire)$ look like? What do the open subsets of~$\N^{\N^\N}$ look like? How to build them from simpler objects?

\section*{Acknowledgments}
The author would like to thank Arno Pauly for suggesting the investigation of the~$\Sigma^0_2$-subsets of~$\O(\Baire)$ and the $\Sigma^0_1$-subsets of~$\C(\Baire,2)$, as well as Matthias Schr\"oder for discussions on this topic.

%
\end{document}